\theoremstyle{definition}
\newtheorem{thm}{Theorem}[section]
\newtheorem{prop}[thm]{Proposition}
\newtheorem{cor}[thm]{Corollary}
\newtheorem{con}[thm]{Conjecture}
\newtheorem{lem}[thm]{Lemma}
\newtheorem{defn}[thm]{Definition}
\newtheorem*{rem}{Remark}
\numberwithin{equation}{section}
\def\blb#1{\text{$\mathbb{#1}$}}
\def\cal#1{\text{$\mathcal{#1}$}}
\def\ord#1^#2{#1$^{\text{#2}}$}
\def\lie#1{\mathfrak{#1}}
\def\tlie#1{\tilde{\mathfrak{#1}}}
\def\hlie#1{\hat{\mathfrak{#1}}}
\def\uqr#1^#2{\text{$U_q^{#2}(\lie #1)$}}
\def\uqhr#1^#2{\text{$U_q^{#2}(\hlie #1)$}}
\def\us#1^#2{\text{$U_{\xi}^{#2}(\lie #1)$}}
\def\ush#1^#2{\text{$U_{\xi}^{#2}(\hlie #1)$}}
\def\dus#1^#2{\text{$\dot{U}_{\xi}^{#2}(\lie #1)$}}
\def\dush#1^#2{\text{$\dot{U}_{\xi}^{#2}(\hlie #1)$}}
\def\gb#1{{\mbox{\boldmath $#1$}}}
\def\gbr#1{{\mbox{\boldmath ${\rm #1}$}}}
\def\wtl{{\rm wt}_\ell}
\def\wt{{\rm wt}}
\def\het{{\rm ht}}
\def\supp{{\rm supp}}
\def\gr{{\rm gr}}
\def\ch{{\rm char}}
\def\opl_#1^#2{\text{\scriptsize$\bigoplus\limits_{\text{\normalsize$#1$}}^{\text{\normalsize$#2$}}$}}
\def\otm_#1^#2{\text{\scriptsize$\bigotimes\limits_{\text{\footnotesize$#1$}}^{\text{\footnotesize$#2$}}$}}
\def\wcal#1{{\mbox{$\widetilde{\cal #1}$}}}
\def\bgb#1{{\mbox{$\overline{\gb #1}$}}}
\def\tqbinom#1#2{\text{$\left[\begin{smallmatrix} #1\\#2\end{smallmatrix}\right]$}}
\renewcommand{\thefootnote}
\begin{document}

\title[Graded limits of minimal affinizations and Beyond]{Graded limits of minimal affinizations and Beyond:\\ the multiplicty free case for type $E_6$}
\author[Adriano Moura and Fernanda Pereira]{Adriano Moura and Fernanda Pereira}
\thanks{}
\address{Departamento de Matemática, Universidade Estadual de Campinas, Campinas - SP - Brazil, 13083-859.}
\email{aamoura@ime.unicamp.br and fernandapereira@ime.unicamp.br}

\maketitle
\centerline{\small{
\begin{minipage}{350pt}
{\bf Abstract:} We obtain a graded character formula for certain graded modules for the current algebra over a simple Lie algebra of type $E_6$. For certain values of their highest weight, these modules were conjectured to be isomorphic to the classical limit of the corresponding minimal affinizations of the associated quantum group. We prove that this is the case under further restrictions on the highest weight. Under another set of conditions on the highest weight, Chari and Greenstein have recently proved that they are projective objects of a full subcategory of the category of graded modules for the current algebra. Our formula applies to all of these projective modules.
\end{minipage}}}

\setcounter{section}{0}

\section*{Introduction}

The problem of determining the structure of the minimal affinizations of quantum groups  is one of the most studied problems in the finite-dimensional representation theory of quantum affine algebras in recent years (see \cite{chhe:beyond} for a recent survey with a comprehensive list of references). In particular, determining the character of such representations when regarded as modules for the quantum group $U_q(\lie g)$ over the underlying semisimple Lie algebra $\lie g$ is of special interest.
Determining the character is theoretically equivalent to determining the multiplicity of the irreducible constituents of these representations when regarded as $U_q(\lie g)$-modules. In practice, computing the multiplicities out of a given character is a laborious task which can be performed algorithmically.

One of the methods which have been used to approach this problem is that of considering the classical limit of the given module and regard it as a representation for the current algebra $\lie g[t]=\lie g\otimes\mathbb C[t]$. This approach was first considered in \cite{cha:fer,chakle} and it was then further developed in \cite{cm:kr,cm:krg,mou:reslim}. In this paper, we apply this method for $\lie g$ of type $E_6$ and obtain a formula for the multiplicities of the irreducible constituents of the graded pieces of these modules assuming certain conditions on the highest weight. Our formula actually holds for a larger class of $\lie g[t]$-modules. Namely, given a dominant integral weight $\lambda$ of $\lie g$, the first author defined in \cite{mou:reslim} a $\lie g[t]$-module denoted by $M(\lambda)$. The definition is by generator and relations which naturally generalize the relations of the classical limits of Kirillov-Reshetikhin modules obtained in \cite{cha:fer}. It was conjectured in \cite{mou:reslim} that $M(\lambda)$ is isomorphic to  the classical limit of the minimal affinizations of the irreducible $U_q(\lie g)$-module of highest-weight $\lambda$ provided that there exists a unique equivalence class of minimal affinizations associated to $\lambda$. Our main results are a formula for the multiplicities of the irreducible constituents of the graded pieces of the modules $M(\lambda)$ and the proof of the conjecture of \cite{mou:reslim} assuming certain conditions on $\lambda$. To explain these conditions, let us label the nodes of the Dynkin diagram of $\lie g$ as follows.
\begin{displaymath}
\setlength{\unitlength}{.2cm}
\begin{picture}(18,6)(0,0)
\put(0,0.5){\circle{1}} \put(-0.3,-1){$\scriptscriptstyle{1}$}
\put(0.5,0.5){\line(1,0){3}} \put(4,0.5){\circle{1}}
\put(3.7,-1){$\scriptscriptstyle{2}$} \put(4.5,0.5){\line(1,0){3}}
\put(8,0.5){\circle{1}} \put(7.8,-1){$\scriptscriptstyle{3}$}
\put(8.5,0.5){\line(1,0){3}} \put(12,0.5){\circle{1}}
\put(11.7,-1){$\scriptscriptstyle{4}$} \put(12.5,0.5){\line(1,0){3}}
\put(16,0.5){\circle{1}} \put(15.7,-1){$\scriptscriptstyle{5}$}
\put(8,1){\line(0,1){3}} \put(8,4.5){\circle{1}}
\put(7.8,5.4){$\scriptscriptstyle{6}$}
\end{picture}
\end{displaymath}

Let $I=\{1,2,\dots,6\}$ and identify it with the set of nodes of the Dynkin diagram of $\lie g$ following the above labeling. For an integral weight $\mu$, the support of $\mu$ is the subset of $I$ consisting of labels such that the value of $\mu$ on the corresponding co-root is nonzero. The connected closure of the support is the minimal connected subdiagram of the Dynkin diagram of $\lie g$ containing the nodes in the support of $\mu$. We mostly focus our study on the modules $M(\lambda)$ with $\lambda$ not supported in the trivalent node and prove that the character formula \eqref{e:chM} below holds for all $\lambda$ with support contained in one of the following subsets of $I$: $\{1,2,5,6\},\{1,4,5,6\},\{2,4,6\}$. Following the conjecture of \cite{mou:reslim}, we conjecture that \eqref{e:chM} holds for all $\lambda$ not supported in the trivalent node and prove in such generality that \eqref{e:chM} gives an upper bound for the multiplicities of the $\lie g$-irreducible constituents of the graded pieces of $M(\lambda)$ (see \eqref{e:ub}). In particular, it follows from \eqref{e:multfree} that all irreducible constituents are multiplicity free (even if the grading is not taken into account). As a byproduct of the proof of \eqref{e:chM}, we obtain a realization of $M(\lambda)$ as a submodule of the tensor product of the classical limits of certain Kirillov-Reshetikhin modules (Theorem \ref{t:main}(a)), thus establishing  part of the conjecture of \cite{mou:reslim} for such $\lambda$.

Keeping the above conditions on $\lambda$ and further assuming that the connected closure of the support of $\lambda$ is of type $A$, we prove that $M(\lambda)$ is isomorphic to the classical limit of the corresponding minimal affinizations when regarded as $\lie g[t]$-modules (Theorem \ref{t:main}(b)). This establishes the other part of the conjecture of \cite{mou:reslim} for these values of $\lambda$. In particular, \eqref{e:chM} gives the multiplicities of the irreducible constituents of the minimal affinizations when the support of $\lambda$ is contained in one of the following subsets of $I$: $\{1,2,5\},\{1,4,5\},\{1,2,6\},\{4,5,6\},\{2,4\}$. Moreover, we also prove that, if \eqref{e:chM} indeed holds for any $\lambda$ not supported in the trivalent node as conjectured, then we can include $\{1,2,4,5\}$ in this list.
Dropping all the assumptions on $\lambda$ except that the connected closure of its support is of type $A$,  we prove that the classical limit of the corresponding minimal affinizations are quotients of $M(\lambda)$ (Proposition \ref{p:LqM}). This is a further step towards the proof of the conjecture of \cite{mou:reslim} in general. However, the graded character formula for the Kirillov-Reshetikhin modules associated to the trivalent node given in \cite{jap:rem} implies that, if $\lambda$ is supported on that node, then these modules are not multiplicity free. We remark that, in \cite{nak:qvtqchar},  Nakajima developed an algorithm for computing the $t$-analogue of the q-character of any finite-dimensional irreducible representation of the quantum affine algebra associated to any simply laced simple Lie algebra $\lie g$. In particular, without any assumption on $\lambda$, the graded character of the classical limits of the minimal affinizations associated to $\lambda$ can be computed using this algorithm.  Theoretically, one can then compute the multiplicities from the character as mentioned in the first paragraph of this introduction. On the other hand, with the above assumptions on $\lambda$, formula \eqref{e:chM} gives these multiplicities directly.

Let us explain the reasons behind the several aforementioned restrictions on $\lambda$. First we recall that, for simply laced $\lie g$, there exists a unique equivalence class of minimal affinizations associated to $\lambda$ if and only if the connected closure of its support is of type $A$. Let $\theta$ be the highest root of $\lie g$ and, given $i\in I$, let $\epsilon_i(\theta)$ be its coordinate in the basis of simple roots. Given a positive integer $r$, let $\lie g[t:r]$ be the quotient of $\lie g[t]$ by the ideal $\lie g\otimes t^r\mathbb C[t]$. It turns out that $M(\lambda)$ factors to a module for $\lie g[t:r]$ where $r$ is the maximum of $\epsilon_i(\theta)$ for $i$ running on the support of $\lambda$. In particular, if $\lie g$ is of type $E_6$, $M(\lambda)$ can be regarded as a module for $\lie g[t:3]$. Moreover,  if $\lambda$ is not supported on the trivalent node, then $M(\lambda)$ factors to a module for $\lie g[t:2]$. The category $\cal G_2$ of graded $\lie g[t:2]$-modules with finite-dimensional graded pieces has been recently studied in \cite{cg:kos,cg:proj} by exploring its interplay with the theory of Koszul algebras and quiver representations. The literature on the representation theory of $\lie g[t:r]$ for $r>2$ is more limited and results such as the ones from \cite{cg:kos,cg:proj} are yet to be established. Thus, we focus on the case that $M(\lambda)$ factors to a $\lie g[t:2]$-module which, for type $E_6$, is equivalent to assuming that $\lambda$ is not supported on the trivalent node (as mentioned above, this is also the necessary and sufficient condition for the modules $M(\lambda)$ to be multiplicity free). It follows from \cite[Theorem 1]{cg:proj} that, if $\lambda$ satisfies certain conditions, then $M(\lambda)$ is a projective object of a full subcategory of $\cal G_2$ naturally attached to $\lambda$. Moreover, \cite[Theorem 2]{cg:proj} gives a graded character formula for $M(\lambda)$ provided $\lambda$ satisfies the conditions of \cite[Theorem 1]{cg:proj}. We remark that \cite[Theorem 2]{cg:proj} expresses the graded character of $M(\lambda)$ in terms of an alternating sum of the graded characters of $M(\mu)$ with $\mu$ strictly smaller than $\lambda$ with respect to the usual partial order on the weight lattice of $\lie g$. Hence, the formula of \cite[Theorem 2]{cg:proj} is of recursive nature. For $\lie g$ of type $E_6$, we prove that the conditions on $\lambda$ required on \cite[Theorem 1]{cg:proj} is equivalent to requiring that the support of $\lambda$ be contained in one of the following subsets of $I$: $\{1,2,5,6\},\{1,4,5,6\}$. Therefore, \eqref{e:chM} holds beyond the cases covered by \cite[Theorem 2]{cg:proj}. This latter list of subsets of $I$ also hints that it should be expected that  when the support of $\lambda$ contains $\{2,4\}$ the situation should be more complicated than otherwise. Indeed, the proof of \eqref{e:chM} for this case is significantly more technically involved than for the others.

The paper is organized as follows. In Section \ref{s:algs}, we review the basic notation on simply laced simple Lie algebras and the associated loop algebras, current algebras, quantum groups, and quantum affine algebras. In Section \ref{s:fdimreps}, we review the relevant facts on the finite-dimensional representation theory of these algebras. After reviewing the classification of minimal affinizations in Subsection \ref{ss:min}, the main results (Theorem \ref{t:main}, Proposition \ref{p:LqM}, the multiplicity free property \eqref{e:ub}, and the character formula \eqref{e:chM}) are stated in Subsection \ref{ss:limits}. The relation of our results with those of \cite{cg:proj} is explained in Subsection \ref{ss:proj}. The proofs are given in Section \ref{s:proof}.

\vskip15pt
\noindent{\bf Acknowledgements:}  The work of the first author was partially supported by CNPq. The M.Sc. studies of the second author, during which part of this work was done, were supported by FAPESP.

\section{Quantum and classical loop algebras}\label{s:algs}

Throughout the paper, let $\mathbb C, \mathbb R,\mathbb Z,\mathbb Z_{\ge m}$ denote the sets of complex numbers, reals, integers,  and integers bigger or equal $m$, respectively. Given a ring $\mathbb A$, the underlying multiplicative group of units is denoted by $\mathbb A^\times$. The dual of a vector space $V$ is denoted by $V^*$. The symbol $\cong$ means ``isomorphic to''. The cardinality of a set $S$ will be denoted by $|S|$.

\subsection{Classical algebras}\label{ss:clalg}

Let $I=\{1,\dots,n\}$ be the set of vertices of a finite-type simply laced Dynkin diagram and let $\lie g$ be the associated semisimple Lie algebra over $\mathbb C$ with a fixed Cartan subalgebra $\lie h$. Fix a set of positive roots $R^+$ and let
$$\lie n^\pm = \opl_{\alpha\in R^+}^{} \lie g_{\pm\alpha} \quad\text{where}\quad \lie g_{\pm\alpha} = \{x\in\lie g: [h,x]=\pm\alpha(h)x, \ \forall \ h\in\lie h\}.$$
The simple roots will be denoted by $\alpha_i$ and the fundamental weights by $\omega_i$, $i\in I$. $Q,P,Q^+,P^+$ will denote the root and weight lattices with corresponding positive cones, respectively.
Let also $h_i\in\lie h$, be the co-root associated to $\alpha_i, i\in I$. We equip $\lie h^*$ with the partial order $\lambda\le \mu$ iff $\mu-\lambda\in Q^+$. Let $C = (c_{ij})_{i,j\in I}$ be the Cartan matrix of $\lie g$, i.e., $c_{ij}=\alpha_j(h_i)$. The Weyl group is denoted by $\cal W$.

The subalgebras $\lie g_{\pm\alpha}, \alpha\in R^+$, are one-dimensional and $[\lie g_{\pm\alpha},\lie g_{\pm\beta}]=\lie g_{\pm\alpha\pm\beta}$ for every $\alpha,\beta\in R^+$. We denote by $x^\pm_\alpha$ any generator of $\lie g_{\pm\alpha}$ and, in case $\alpha=\alpha_i$ for some $i\in I$, we may also use the notation $x_i^\pm$ in place of $x_{\alpha_i}^\pm$. In particular, if $\alpha+\beta\in R^+$, $[x^\pm_\alpha,x^\pm_\beta]$ is a nonzero generator of $\lie g_{\pm\alpha\pm\beta}$ and we simply write  $[x^\pm_\alpha,x^\pm_\beta]=x_{\alpha+\beta}^\pm$. For each subset $J$ of $I$ let $\lie g_J$ be the Lie subalgebra of $\lie g$ generated by $x_{\alpha_j}^\pm, j\in J$, and define $\lie n^\pm_J, \lie h_J$ in the obvious way. Let also $Q_J$ be the subgroup of $Q$ generated by $\alpha_j, j\in J$, and $R^+_J=R^+\cap Q_J$. Given $\lambda\in P$, let $\lambda_J$ be the restriction of $\lambda$ to $\lie h_J^*$ and $\lambda^J\in P$ be such that $\lambda^J(h_j)=\lambda(h_j)$ if $j\in J$ and $\lambda^J(h_j)=0$ otherwise. By abuse of language, we will refer to any subset $J$ of $I$ as a subdiagram of the Dynkin diagram of $\lie g$. The support of $\mu\in P$ is defined to be the subdiagram ${\rm supp}(\mu)\subseteq I$ given by ${\rm supp}(\mu)=\{i\in I:\mu(h_i)\ne 0\}$. Let also $\overline{\rm supp}(\mu)$ be the minimal connected subdiagram of $I$ containing ${\rm supp}(\mu)$.

If  $\lie a$ is a Lie algebra over $\mathbb C$, define its loop algebra to be $\tlie a=\lie a\otimes_{\mathbb C}  \mathbb C[t,t^{-1}]$ with bracket given by $[x \otimes t^r,y \otimes t^s]=[x,y] \otimes t^{r+s}$. Clearly $\lie a\otimes 1$ is a subalgebra of $\tlie a$ isomorphic to $\lie a$ and, by abuse of notation, we will continue denoting its elements by $x$ instead of $x\otimes 1$. We also consider the current algebra $\lie a[t]$ which is the subalgebra of $\tlie a$ given by $\lie a[t]=\lie a\otimes \mathbb C[t]$. Then $\tlie g = \tlie n^-\oplus \tlie h\oplus \tlie n^+$ and $\tlie h$ is an abelian subalgebra and similarly for $\lie g[t]$. The elements $x_\alpha^\pm\otimes t^r, x_i^\pm\otimes t^r$, and $h_i\otimes t^r$ will be denoted by $x_{\alpha,r}^\pm, x_{i,r}^\pm$, and $h_{i,r}$, respectively. Also,  Diagram subalgebras $\tlie g_J$ are defined in the obvious way.

Let $U(\lie a)$ denote the universal enveloping algebra of a Lie algebra $\lie a$. Then $U(\lie a)$ is a subalgebra of $U(\tlie a)$.
Given $a\in\mathbb C$, let $\tau_a$ be the Lie algebra automorphism of $\lie a[t]$ defined by $\tau_a(x\otimes f(t))=x\otimes f(t-a)$ for every $x\in\lie a$ and every $f(t)\in\mathbb C[t]$. If $a\ne 0$, let ${\rm ev}_a:\tlie a\to\lie a$ be the evaluation map $x\otimes f(t)\mapsto f(a)x$. We also denote by $\tau_a$ and ${\rm ev}_a$ the induced maps $U(\lie a[t])\to U(\lie a[t])$ and $U(\tlie a)\to U(\lie a)$, respectively. Given a nonzero $x\in \lie a$ we shall denote by $U(x)$ the universal enveloping algebra of the one-dimensional subalgebra generated by $x$ regarded as a subalgebra of $U(\lie a)$.

For each $i\in I$ and  $r\in\mathbb Z$, define elements $\Lambda_{i,r}\in U(\tlie h)$ by the following equality of formal power series in the variable $u$:
\begin{equation}\label{e:Lambdadef}
\sum_{r=0}^\infty \Lambda_{i,\pm r} u^r = \exp\left( - \sum_{s=1}^\infty \frac{h_{\alpha_i,\pm s}}{s} u^s\right).
\end{equation}

\subsection{Quantum algebras}

Let $\mathbb C(q)$ be the ring of rational functions on an indeterminate $q$ and $\mathbb A=\mathbb C[q,q^{-1}]$. Set
\begin{equation*}
[m]=\frac{q^m -q^{-m}}{q -q^{-1}},\ \ \ \ [m]!
=[m][m-1]\ldots [2][1],\ \ \ \ \tqbinom{m}{r} = \frac{[m]!}{[r]![m-r]!},
\end{equation*}
for $r,m\in\mathbb Z_{\ge 0}$, $m\ge r$. Notice that $[m],\tqbinom{m}{r}\in\mathbb A$.

The quantum loop algebra  $U_q(\tlie g)$ of $\lie g$ is  the associative $\mathbb C(q)$-algebra with
generators $x_{i,r}^{{}\pm{}}$ ($i\in I$, $r\in\blb Z$), $k_i^{{}\pm
1}$ ($i\in I$), $h_{i,r}$ ($i\in I$, $r\in \blb Z\backslash\{0\}$)
and the following defining relations:
\begin{align*}
k_ik_i^{-1} = k_i^{-1}k_i& =1, \ \  k_ik_j =k_jk_i,\\
k_ih_{j,r}& =h_{j,r}k_i,\\
k_ix_{j,r}^\pm k_i^{-1} &= q^{{}\pm c_{ij}}x_{j,r}^{{}\pm{}},\ \ \\
[h_{i,r},h_{j,s}]=0,\; \; & [h_{i,r} , x_{j,s}^{{}\pm{}}] = \pm\frac1r[rc_{ij}]x_{j,r+s}^{{}\pm{}},\\
x_{i,r+1}^{{}\pm{}}x_{j,s}^{{}\pm{}} -q^{{}\pm c_{ij}}x_{j,s}^{{}\pm{}}x_{i,r+1}^{{}\pm{}} &
=q^{{}\pm     c_{ij}}x_{i,r}^{{}\pm{}}x_{j,s+1}^{{}\pm{}} -x_{j,s+1}^{{}\pm{}}x_{i,r}^{{}\pm{}},\\
[x_{i,r}^+ , x_{j,s}^-]=\delta_{i,j} & \frac{ \psi_{i,r+s}^+ - \psi_{i,r+s}^-}{q - q^{-1}},\\
\sum_{\sigma\in S_m}\sum_{k=0}^m(-1)^k
\tqbinom{m}{k}x_{i, r_{\sigma(1)}}^{{}\pm{}}\ldots x_{i,r_{\sigma(k)}}^{{}\pm{}} &
x_{j,s}^{{}\pm{}} x_{i, r_{\sigma(k+1)}}^{{}\pm{}}\ldots x_{i,r_{\sigma(m)}}^{{}\pm{}} =0,\ \ \text{if $i\ne j$},
\end{align*}
for all sequences of integers $r_1,\ldots, r_m$, where $m
=1-c_{ij}$, $S_m$ is the symmetric group on $m$ letters, and
the $\psi_{i,r}^{{}\pm{}}$ are determined by equating powers of
$u$ in the formal power series
$$\Psi_i^\pm(u) = \sum_{r=0}^{\infty}\psi_{i,\pm r}^{\pm}u^r = k_i^{\pm 1} \exp\left(\pm(q-q^{-1})\sum_{s=1}^{\infty}h_{i,\pm s} u^s\right).$$

Denote by $U_q(\tlie n^\pm), U_q(\tlie h)$  the subalgebras of $U_q(\tlie g)$ generated by $\{x_{i,r}^\pm\}, \{k_i^{\pm1}, h_{i,s}\}$, respectively. Let  $U_q(\lie g)$  be  the subalgebra generated by $x_i^\pm:=x_{i,0}^\pm, k_i^{\pm 1}, i\in I,$ and define $U_q(\lie n^\pm), U_q(\lie h)$ in the obvious way.  $U_q(\lie g)$ is a subalgebra of $U_q(\tlie g)$ and multiplication establishes isomorphisms of $\mathbb C(q)$-vectors spaces:
$$U_q(\lie g) \cong U_q(\lie n^-) \otimes U_q(\lie h) \otimes U_q(\lie n^+) \qquad\text{and}\qquad U_q(\tlie g) \cong U_q(\tlie n^-) \otimes U_q(\tlie h) \otimes U_q(\tlie n^+).$$

Let $J\subseteq I$ and consider the subalgebra $U_q(\tlie g_J)$ generated by $k_j^{\pm 1}, h_{j,r}, x^\pm_{j,s}$ for all $j\in J, r,s\in \blb Z, r\ne 0$. If $J=\{j\}$, the algebra $U_q(\tlie g_j):=U_q(\tlie g_J)$ is isomorphic to $U_{q}(\tlie{sl}_2)$. Similarly we define the subalgebra $U_q(\lie g_J)$, etc.

For $i\in I, r\in \mathbb Z, k\in\mathbb Z_{\ge 0}$, define $(x_{i,r}^\pm)^{(k)} = \frac{(x_{i,r}^\pm)^k}{[k]!}$. Define also elements $\Lambda_{i,r}, i\in I, r\in\mathbb Z$ by
\begin{equation}\label{e:Lambdad}
\sum_{r=0}^\infty \Lambda_{i,\pm r} u^{r}=
\exp\left(-\sum_{s=1}^\infty\frac{h_{i,\pm s}}{[s]}u^s\right).
\end{equation}
Although we are denoting the elements $x_{i,r}^\pm, h_{i,r}$, and $\Lambda_{i,r}$ above by the same symbol as their classical counterparts, this will not create confusion as it will be clear from the context.

Let $U_{\mathbb A}(\tlie g)$ be the $\mathbb A$-subalgebra of $U_q(\tlie g)$ generated by the elements $(x_{i,r}^\pm)^{(k)}, k_i^{\pm 1}$ for $i\in I,r\in\mathbb Z$, and $k\in\mathbb Z_{\ge 0}$. Define $U_{\mathbb A}(\lie g)$ similarly and notice that $U_{\mathbb A}(\lie g)=U_{\mathbb A}(\tlie g)\cap U_q(\lie g)$. Henceforth $\lie a$ will denote a Lie algebra of the following set: $\lie g, \lie n^\pm, \lie h, \tlie g, \tlie n^\pm, \tlie h$. For the proof of the next proposition see \cite[Lemma 2.1]{cha:fer} and the locally cited references.

\begin{prop}
The  canonical map $\mathbb C(q)\otimes_{\mathbb A} U_{\mathbb A}(\lie a)\to U_q(\lie a)$ is an isomorphism.\hfill\qedsymbol
\end{prop}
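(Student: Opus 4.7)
The plan is to prove surjectivity and injectivity of the canonical map $\Phi:\mathbb C(q)\otimes_{\mathbb A}U_{\mathbb A}(\lie a)\to U_q(\lie a)$ separately, in each case reducing to the existence of a PBW-type basis common to the integral form and to the $\mathbb C(q)$-form.

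For surjectivity, since $U_q(\lie a)$ is generated by $x_{i,r}^\pm$, $k_i^{\pm 1}$, and (in the loop case) $h_{i,r}$, it is enough to show each of these lies in the image of $\Phi$. The elements $x_{i,r}^\pm=(x_{i,r}^\pm)^{(1)}$ and $k_i^{\pm 1}$ are, by definition, in $U_{\mathbb A}(\lie a)$. To handle $h_{i,r}$, one uses the defining relation $[x_{i,r}^+,x_{i,0}^-]=(\psi_{i,r}^+-\psi_{i,r}^-)/(q-q^{-1})$; since $(q-q^{-1})\in\mathbb C(q)^\times$, each $\psi_{i,r}^\pm$ lies in the image of $\Phi$, and inverting the generating-series identity $\Psi_i^\pm(u)=k_i^{\pm 1}\exp(\pm(q-q^{-1})\sum_{s\ge 1}h_{i,\pm s}u^s)$ recursively expresses $h_{i,r}$ as a $\mathbb C(q)$-linear combination of products of $\psi_{i,s}^\pm$ and $k_i^{\pm 1}$. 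For the Cartan subalgebras, the same argument applied to $U_q(\tlie h)$ (and its trivial specialization to $U_q(\lie h)$) gives surjectivity.

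For injectivity, the strategy is to exhibit, for each $\lie a$ in the list, an explicit set $\mathcal B_{\lie a}\subset U_{\mathbb A}(\lie a)$ which is simultaneously an $\mathbb A$-basis of $U_{\mathbb A}(\lie a)$ and a $\mathbb C(q)$-basis of $U_q(\lie a)$; from this $\Phi$ is clearly bijective. For $\lie a\in\{\lie g,\lie n^\pm,\lie h\}$ these are Lusztig's PBW bases of the restricted integral form, built from ordered monomials in the divided powers $(x_\alpha^\pm)^{(k)}$ (where the $x_\alpha^\pm$ are obtained via Lusztig's braid-group action) together with suitable divided-power Cartan elements such as $k_i^{\pm 1}\tqbinom{k_i;c}{r}$. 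For $\lie a\in\{\tlie g,\tlie n^\pm,\tlie h\}$ the corresponding statement is the Beck--Chari--Pressley PBW theorem for the quantum loop algebra: one orders the real and imaginary root vectors (with the imaginary ones in $U_q(\tlie h)$ taken to be the $\Lambda_{i,r}$ of \eqref{e:Lambdad}), passes to divided powers, and verifies that the resulting monomials form a free $\mathbb A$-basis of $U_{\mathbb A}(\tlie a)$. In each case the same set is known to be a $\mathbb C(q)$-basis of $U_q(\lie a)$, so $\Phi$ takes basis to basis and is an isomorphism.

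The main technical obstacle lies in the loop case: one must produce integral forms of the quantum loop root vectors, show that their divided-power monomials really span $U_{\mathbb A}(\tlie a)$ over $\mathbb A$, and keep the imaginary-direction generators (the $\Lambda_{i,r}$ versus the $h_{i,r}$) consistent between the two presentations. All of this is established in the references cited after the proposition (in particular \cite[Lemma~2.1]{cha:fer} and the works it points to), so the argument here amounts to assembling those PBW results and checking that the generators written down in the definition of $U_{\mathbb A}(\lie a)$ suffice to recover the entire PBW basis.
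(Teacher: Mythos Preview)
Your proposal is correct and follows the standard route: the paper itself gives no argument beyond the citation to \cite[Lemma~2.1]{cha:fer} and the references therein, and the proof contained in those references is precisely the PBW-basis argument you outline (Lusztig's bases for $U_q(\lie g)$ and the Beck--Chari--Pressley bases for $U_q(\tlie g)$, showing that the same ordered divided-power monomials form an $\mathbb A$-basis of the integral form and a $\mathbb C(q)$-basis of the full algebra). Your surjectivity discussion is a harmless addition, since once the common basis is exhibited both injectivity and surjectivity are immediate.
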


Regard $\mathbb C$ as an $\mathbb A$-module by letting $q$ act as $1$ and set
\begin{equation}
\overline{U_q(\lie a)} = \mathbb C\otimes_\mathbb A U_\mathbb A(\lie a).
\end{equation}
Denote by $\bar x$ the image of $x\in U_\mathbb A(\tlie g)$ in $\overline{U_q(\tlie g)}$. For a proof of the next proposition see \cite[Proposition 9.2.3]{cp:book} and the locally cited references.

\begin{prop}\label{p:cluq}
$U(\tlie g)$ is isomorphic to the quotient of $\overline{U_q(\tlie g)}$ by the ideal generated by ${\overline k_i}-1$. In particular, the category of $\overline{U_q(\tlie g)}$-modules on which $k_i$ act as the identity operator for all $i\in I$ is equivalent to the category of all $\tlie g$-modules.\hfill\qedsymbol
\end{prop}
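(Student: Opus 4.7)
My plan is to construct an explicit algebra isomorphism $\phi\colon U(\tlie g)\to \overline{U_q(\tlie g)}/J$, where $J$ is the two-sided ideal generated by the classes $\overline k_i-1$ for $i\in I$. Once this first claim is established, the second statement follows formally: a $\overline{U_q(\tlie g)}$-module on which every $k_i$ acts as the identity is precisely a module annihilated by $J$, hence a module for the quotient, and $\tlie g$-modules coincide with $U(\tlie g)$-modules via the universal property.

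For the construction of $\phi$, I would send the Drinfeld generators $x^\pm_{i,r}, h_{i,r}$ of $U(\tlie g)$ to the classes in the quotient of their same-named elements in $U_\mathbb A(\tlie g)$, and then verify that the classical loop-algebra relations hold. The quadratic Drinfeld relation $x^\pm_{i,r+1}x^\pm_{j,s}-q^{\pm c_{ij}}x^\pm_{j,s}x^\pm_{i,r+1}=q^{\pm c_{ij}}x^\pm_{i,r}x^\pm_{j,s+1}-x^\pm_{j,s+1}x^\pm_{i,r}$ specializes at $q=1$ to $[x^\pm_{i,r+1},x^\pm_{j,s}]=[x^\pm_{i,r},x^\pm_{j,s+1}]$, which is exactly the current/loop relation. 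Since $[n]_q\to n$ as $q\to 1$, the relation involving $[rc_{ij}]/r$ delivers $[h_{i,r},x^\pm_{j,s}]=\pm c_{ij}x^\pm_{j,r+s}$, and the quantum Serre relations specialize to the classical Serre relations by the same limit of $q$-binomials. The most delicate item is $[x^+_{i,r},x^-_{j,s}]=\delta_{ij}(\psi^+_{i,r+s}-\psi^-_{i,r+s})/(q-q^{-1})$: expanding $\Psi_i^\pm(u)=k_i^{\pm1}\exp(\pm(q-q^{-1})\sum_s h_{i,\pm s}u^s)$ modulo $(q-q^{-1})^2$ gives $\psi^\pm_{i,n}\equiv \pm(q-q^{-1})k_i^{\pm1}h_{i,n}$ for $n\ne 0$, so after dividing by $q-q^{-1}$ and setting $\overline k_i=1$ one recovers $h_{i,r+s}$; the case $r+s=0$ uses that $(k_i-k_i^{-1})/(q-q^{-1})$ lies in the integral form and specializes to $h_i$.

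For surjectivity of $\phi$, I would invoke that $U_\mathbb A(\tlie g)$ is generated as an $\mathbb A$-algebra by the divided powers $(x^\pm_{i,r})^{(k)}$ together with the $k_i^{\pm 1}$ and the Cartan-type elements listed above; modulo $J$ the images of the $k_i^{\pm1}$ become trivial and the divided powers reduce to $(x^\pm_{i,r})^k/k!$, so the image of $\phi$ contains a generating set of the quotient. For injectivity, the cleanest route is to compare PBW-type bases via the triangular decomposition $U_q(\tlie g)\cong U_q(\tlie n^-)\otimes U_q(\tlie h)\otimes U_q(\tlie n^+)$: the Beck--Damiani PBW basis of $U_q(\tlie n^\pm)$ over $\mathbb A$ descends to a basis of $\overline{U_q(\tlie n^\pm)}$ which is identified with the classical PBW basis of $U(\tlie n^\pm)$, while on the Cartan side the polynomial algebra generated by the $h_{i,r}$ with $r\ne 0$ matches on both sides and the finite part $U(\lie h)$ is recovered from $\overline{U_q(\lie h)}$ after imposing $\overline k_i=1$. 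A weight-space-by-weight-space dimension count then forces $\phi$ to be a bijection.

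\textbf{Main obstacle.} The genuine technical difficulty is the Cartan and commutator piece: making rigorous the compound specialization ``$q\to 1$ followed by $\overline k_i=1$'' for the ratios $(k_i-k_i^{-1})/(q-q^{-1})$ and $(\psi^+_{i,n}-\psi^-_{i,n})/(q-q^{-1})$ and showing that these precisely produce $h_i$ and $h_{i,n}$ in a manner compatible with all Drinfeld relations simultaneously. This requires a careful choice of integral form (including the Chevalley-type Cartan elements), after which the remainder of the argument reduces to the alignment of the two PBW bases.
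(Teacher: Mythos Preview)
The paper does not supply its own proof of this proposition: it is stated with a \qedsymbol\ and the preceding sentence refers the reader to \cite[Proposition~9.2.3]{cp:book} and the references cited there. So there is no ``paper's proof'' to compare against beyond that citation.

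Your outline is essentially the standard argument underlying that reference: define the map on Drinfeld generators, verify that the quantum Drinfeld relations specialize at $q=1$ (and $\overline k_i=1$) to the classical loop-algebra relations, and establish bijectivity via the triangular decomposition together with the Beck--Damiani integral PBW basis on the nilpotent parts and an explicit treatment of the Cartan part. You have correctly identified the only genuinely delicate point, namely that the elements $(k_i-k_i^{-1})/(q-q^{-1})$ and, more generally, $(\psi^+_{i,n}-\psi^-_{i,n})/(q-q^{-1})$ must be shown to lie in the integral form and to specialize to $h_i$ and $h_{i,n}$; this is exactly why the $\mathbb A$-form is built from divided powers and the associated Chevalley-type Cartan elements $\tqbinom{k_i;c}{r}$ rather than from the $h_{i,r}$ directly. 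Your sketch is correct in spirit, and filling it in amounts to reproducing the argument of the cited literature.
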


The algebra $U_q(\tlie g)$ is a Hopf algebra and induces a Hopf algebra structure (over $\mathbb A$) on $U_\mathbb A(\tlie g)$. Moreover, the induced Hopf algebra structure on $U(\tlie g)$ coincides with the usual one (see \cite{cp:book,lus:book}). On $U_q(\lie g)$ we have
\begin{equation}\label{e:comultuqg}
\Delta (x_{i}^+) = x_{i}^+\otimes 1+ k_i\otimes x_{i}^+, \qquad \Delta (x_{i}^-) = x_{i}^-\otimes k_i^{-1}  + 1\otimes x_{i}^-, \qquad \Delta(k_i) = k_i\otimes k_i
\end{equation}
for all $i\in I$. The next lemma is easily established (cf. \cite[Lemma 1.5]{mou:reslim}).

\begin{lem}\label{l:comcom}
Suppose $x=[x_{i_1}^-,[x_{i_2}^-,\cdots[x_{i_{l-1}}^-,x_{i_l}^-]\cdots]]$. Then $x\in U_\mathbb A(\lie n^-)$ and
$$\Delta(x)\in x\otimes (\prod_{j=1}^l k_{i_j}^{-1}) + 1\otimes x + f(q)y$$
for some $y\in U_\mathbb A(\lie g)\otimes U_\mathbb A(\lie g)$ and some $f(q)\in\mathbb A$ such that $f(1)=0$.\hfill\qedsymbol
\end{lem}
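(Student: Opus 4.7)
I would argue by induction on the length $l$ of the nested commutator. The base case $l=1$ is immediate from \eqref{e:comultuqg}: $\Delta(x_{i_1}^-) = x_{i_1}^-\otimes k_{i_1}^{-1} + 1\otimes x_{i_1}^-$ is already of the stated form with $f(q)=0$, and the inclusion $x_{i_1}^-\in U_\mathbb A(\lie n^-)$ is clear.

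For the inductive step, write $x=[x_{i_1}^-, x']$ with $x'=[x_{i_2}^-,\ldots,[x_{i_{l-1}}^-,x_{i_l}^-]\ldots]$. Since the ideal of elements of $\mathbb A$ vanishing at $q=1$ is principal, generated by $q-1$, the induction hypothesis can be rephrased as
\begin{equation*}
\Delta(x') \;=\; x'\otimes K' + 1\otimes x' + (q-1)z, \qquad K' := \prod_{j=2}^{l} k_{i_j}^{-1},
\end{equation*}
for some $z\in U_\mathbb A(\lie g)\otimes U_\mathbb A(\lie g)$. Now expand $\Delta(x)=[\Delta(x_{i_1}^-),\Delta(x')]$ by bilinearity. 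Two of the resulting brackets produce the expected leading terms: $[x_{i_1}^-\otimes k_{i_1}^{-1},\, x'\otimes K'] = x\otimes\prod_{j=1}^l k_{i_j}^{-1}$ (since the $k_i$'s commute among themselves) and $[1\otimes x_{i_1}^-,\, 1\otimes x'] = 1\otimes x$. The two mixed brackets $[x_{i_1}^-\otimes k_{i_1}^{-1},\,1\otimes x']$ and $[1\otimes x_{i_1}^-,\, x'\otimes K']$ each reduce, via the quasi-commutation $k_i^{\pm 1} x_j^- = q^{\mp c_{ij}} x_j^- k_i^{\pm 1}$ iterated according to the weight of $x'$, to an expression of the form $(q^N-1)(u\otimes v)$ with $u,v\in U_\mathbb A(\lie g)$ and $N\in\mathbb Z$; since $q^N-1\in (q-1)\mathbb A$ for every integer $N$, these contributions lie in $(q-1)\bigl(U_\mathbb A(\lie g)\otimes U_\mathbb A(\lie g)\bigr)$. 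Finally, the brackets of $\Delta(x_{i_1}^-)$ against $(q-1)z$ equal $(q-1)[\Delta(x_{i_1}^-),z]$ and belong to the same ideal, since $\Delta(x_{i_1}^-)\in U_\mathbb A(\lie g)\otimes U_\mathbb A(\lie g)$ and that subalgebra is closed under commutators.

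Collecting all corrections into a common factor $f(q)=q-1$ yields $\Delta(x)$ in the required form, and the membership $x\in U_\mathbb A(\lie n^-)$ is automatic because $U_\mathbb A(\lie n^-)$ contains each $x_i^-$ and is stable under brackets. The only substantive bookkeeping is verifying that every mixed term really becomes a scalar (lying in $(q-1)\mathbb A$) times an element of $U_\mathbb A(\lie g)\otimes U_\mathbb A(\lie g)$; this is the sole step requiring genuine computation and relies on the Cartan matrix being integer-valued, so that the quasi-commutation factors take the form $q^N$ with $N\in\mathbb Z$. I do not anticipate any genuine obstacle beyond this tracking of signs and exponents.
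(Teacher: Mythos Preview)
Your induction on $l$ is correct and is exactly the standard argument one would expect here; the paper itself does not spell out a proof but simply states the lemma with a reference to \cite[Lemma 1.5]{mou:reslim} and a \qedsymbol. Your computation of the four cross terms---two collapsing to $x\otimes\prod_j k_{i_j}^{-1}$ and $1\otimes x$, the other two producing factors $q^N-1\in(q-1)\mathbb A$ via the relation $k_i^{-1}x_j^-=q^{c_{ij}}x_j^-k_i^{-1}$ iterated over the weight of $x'$---is precisely what the cited lemma does, so there is nothing to add.
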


An expression for the comultiplication $\Delta$ of $U_q(\tlie g)$ in terms of the generators $x^\pm_{i,r}, h_{i,r}, k_i^{\pm1}$  is not known. The following partial information will suffice for our purposes (see \cite[Lemma 1.6]{mou:reslim} and the locally cited references).

\begin{lem}\label{l:comult}
$\Delta(x_{i,1}^-)=x_{i,1}^-\otimes k_i + 1\otimes x_{i,1}^- + x$ for some $x\in U_\mathbb A(\lie g)\otimes U_\mathbb A(\lie g)$ such that $\bar x=0$.\hfill\qedsymbol
\end{lem}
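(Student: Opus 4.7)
The plan is to reduce the coproduct of $x_{i,1}^-$ to that of $x_i^-$ and $h_{i,1}$ via the Drinfeld relation
$$x_{i,1}^- = -\tfrac{1}{[2]}[h_{i,1}, x_i^-],$$
which follows from the defining bracket $[h_{i,r}, x_{j,s}^-] = -\tfrac{1}{r}[rc_{ij}]\,x_{j,r+s}^-$ at $i=j$, $r=1$, $s=0$. Since $\Delta$ is a $\mathbb C(q)$-algebra homomorphism and $\Delta(x_i^-) = x_i^- \otimes k_i^{-1} + 1\otimes x_i^-$ by \eqref{e:comultuqg}, the problem reduces to determining $\Delta(h_{i,1})$ to sufficient accuracy.

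To handle $\Delta(h_{i,1})$, I would exploit the relation $\psi_{i,1}^+ = (q-q^{-1})\,k_i\,h_{i,1}$ obtained by expanding $\Psi_i^+(u)$ to first order in $u$, together with the partial coproduct of $\psi_{i,1}^+$. The latter can be derived by applying $\Delta$ to both sides of the Drinfeld relation $[x_i^+, x_{i,1}^-] = \psi_{i,1}^+/(q-q^{-1})$ and iteratively using \eqref{e:comultuqg} and Lemma \ref{l:comcom}. A short induction yields an expression of the form
$$\Delta(h_{i,1}) \equiv h_{i,1}\otimes 1 + K \otimes h_{i,1} \pmod{(q-q^{-1})\,U_\mathbb A(\lie g)\otimes U_\mathbb A(\lie g)}$$
for a Cartan element $K$ with $\overline{K}=1$, forced by matching the classical specialization $\overline{\Delta(h_{i,1})} = h_{i,1}\otimes 1 + 1\otimes h_{i,1}$. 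Substituting into the commutator formula for $\Delta(x_{i,1}^-)$ and simplifying using $[h_{i,1}, k_i^{\pm 1}] = 0$ together with $[k_i^n, x_i^-] = (q^{-2n}-1)\,x_i^- k_i^n$, the leading contributions collect to $x_{i,1}^-\otimes k_i + 1\otimes x_{i,1}^-$, while the residual terms all lie in $U_\mathbb A(\lie g)\otimes U_\mathbb A(\lie g)$ and inherit an explicit factor of the form $(q-q^{-1})$ or $(q^N-1)$, so that $\overline{x}=0$.

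The main obstacle is the precise identification of the Cartan element $K$ that produces $k_i$ (and not $k_i^{-1}$) on the right tensor factor of $x_{i,1}^-\otimes k_i$: weight alone fixes $K$ only up to an element of $U_\mathbb A(\lie h)$ specializing to $1$, so the specific power of $k_i$ must be pinned down by a genuine Hopf-algebra computation for $\Delta(\psi_{i,1}^+)$, in which the nontrivial Cartan shifts produced by the cross commutators $[K, x_i^-]\otimes h_{i,1} k_i^{-1}$ must be tracked and shown to cancel the mismatch modulo the prescribed error subspace. This is exactly the argument carried out in \cite[Lemma 1.6]{mou:reslim} and the locally cited references, to which the present lemma reduces.
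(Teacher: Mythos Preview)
The paper does not give its own proof of this lemma: it simply cites \cite[Lemma 1.6]{mou:reslim} (and references therein) and appends a \qedsymbol. So there is nothing to compare your argument against on the paper's side; your proposal is an attempt to supply a proof where the paper chose not to.

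Your outline is reasonable in spirit, but as written it has a genuine gap and ends circularly. The delicate point in the statement is not merely that $\bar x=0$, but that $x\in U_\mathbb A(\lie g)\otimes U_\mathbb A(\lie g)$, i.e.\ the error lies in the \emph{finite-type} integral form, with no loop generators. Your computation, however, produces cross terms such as $[K,x_i^-]\otimes h_{i,1}k_i^{-1}$, and your claimed approximation $\Delta(h_{i,1})\equiv h_{i,1}\otimes 1+K\otimes h_{i,1}$ modulo $(q-q^{-1})\,U_\mathbb A(\lie g)\otimes U_\mathbb A(\lie g)$ is asserted rather than proved; a priori the error in $\Delta(h_{i,1})$ involves elements of $U_\mathbb A(\tlie g)$. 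Likewise, even if one lands on $x_{i,1}^-\otimes k_i^{-1}+1\otimes x_{i,1}^-$ up to terms vanishing at $q=1$, the discrepancy $x_{i,1}^-\otimes(k_i-k_i^{-1})$ vanishes at $q=1$ but is not in $U_\mathbb A(\lie g)\otimes U_\mathbb A(\lie g)$, so it cannot simply be absorbed into $x$. You acknowledge this obstacle and then defer to \cite[Lemma 1.6]{mou:reslim}, which is exactly the reference the paper already cites; that makes your argument circular rather than self-contained. If you want an independent proof, the standard route is to pass to the Drinfeld--Jimbo presentation via Beck's isomorphism, where $x_{i,1}^-$ is expressed through Lusztig's braid group operators acting on Chevalley generators, and then use \eqref{e:comultuqg} together with Lemma~\ref{l:comcom}; that is what the references behind \cite{mou:reslim} actually do.
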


\subsection{The $\ell$-weight lattice}\label{ss:llattice} Given a field $\mathbb F$ consider the multiplicative group $\cal P_\mathbb F$ of $n$-tuples of rational functions $\gb\mu = (\gb\mu_1(u),\cdots, \gb\mu_n(u))$ with values in $\mathbb F$  such that $\gb\mu_i(0)=1$ for all $i\in I$. We shall often think of $\gb\mu_i(u)$ as a formal power series in $u$ with coefficients in $\mathbb F$. Given $a\in\mathbb F^\times$ and $i\in I$, let $\gb\omega_{i,a}$ be defined by
$$(\gb\omega_{i,a})_j(u) = 1-\delta_{i,j}au.$$
Clearly, if $\mathbb F$ is algebraically closed, $\cal P_\mathbb F$ is the free abelian group generated by these elements which are called fundamental $\ell$-weights. It is also convenient to introduce elements $\gb\omega_{\lambda,a}, \lambda\in P,a\in\mathbb F$, defined by
\begin{equation}
\gb\omega_{\lambda,a} = \prod_{i\in I}(\gb\omega_{i,a})^{\lambda(h_i)}.
\end{equation}
If $\mathbb F$ is algebraically closed, introduce the group homomorphism (weight map) $\wt:\cal P_\mathbb F \to P$ by setting $\wt(\gb\omega_{i,a})=\omega_i$. Otherwise, let $\mathbb K$ be an algebraically closed extension of $\mathbb F$ so that $\cal P_\mathbb F$ can be regarded as a subgroup of $\cal P_\mathbb K$ and define the weight map on $\cal P_\mathbb F$ by restricting the one on $\cal P_\mathbb K$.

Define the $\ell$-weight lattice of $U_q(\tlie g)$ to be $\cal P_q:=\cal P_{\mathbb C(q)}$. The submonoid $\cal P_q^+$ of $\cal P_q$ consisting of $n$-tuples of polynomials is called the set of dominant $\ell$-weights of $U_q(\tlie g)$.
Given $\gb\lambda\in\cal P_q^+$ with $\gb\lambda_i(u) = \prod_j (1-a_{i,j}u)$, where $a_{i,j}$ belongs to some algebraic closure of $\mathbb C(q)$, let $\gb\lambda^-\in\cal P_q^+$ be defined by $\gb\lambda^-_i(u) = \prod_j (1-a_{i,j}^{-1}u)$. We will also use the notation $\gb\lambda^+ = \gb\lambda$.  Given $\gb\nu\in\cal P_q$, say $\gb\nu = \gb\lambda\gb\mu^{-1}$ with $\gb\lambda,\gb\mu\in\cal P_q^+$, define a $\mathbb C(q)$-algebra homomorphism $\gb\Psi_{\gb\nu}:U_q(\tlie h)\to \mathbb C(q)$ by  setting $\gb\Psi_{\gb\nu}(k_i^{\pm 1}) = q_i^{\pm \wt(\gb\nu)(h_i)}$ and
\begin{equation}\label{e:PsiLambda}
\sum_{r\ge 0} \gb\Psi_{\gb\nu}(\Lambda_{i,\pm r}) u^r = \frac{(\gb\lambda^{\pm})_i(u)}{(\gb\mu^{\pm})_i(u)}.
\end{equation}
One easily checks that the map $\gb\Psi:\cal P_q\to (U_q(\tlie h))^*$ given by $\gb\nu\mapsto \gb\Psi_{\gb\nu}$ is injective.
Define the $\ell$-weight lattice $\cal P$ of $\tlie g$ to be the subgroup of $\cal P_q$ generated by $\gb\omega_{i,a}$ for all $i\in I$ and all $a\in\mathbb C^\times$ or, equivalently, $\cal P = \cal P_\mathbb C$.
Set also $\cal P^+=\cal P\cap\cal P_q^+$. From now on we will identify $\cal P_q$  with its image in $(U_q(\tlie h))^*$  under $\gb\Psi$. Similarly, $\cal P$ will be identified with a subset of $U(\tlie h)^*$ via the homomorphism $\gb\Psi_{\gb\nu}:U(\tlie h)\to \mathbb C$ determined by \eqref{e:PsiLambda} and $\gb\Psi_{\gb\nu}(h_i) = \wt(\gb\nu)(h_i)$.

It will be convenient to introduce the following notation. Given $i\in I, a\in\mathbb C(q)^\times, r\in\mathbb Z_{\ge 0}$, define
\begin{equation}\label{e:krhlw}
\gb\omega_{i,a,r} = \prod_{j=0}^{r-1} \gb\omega_{i,aq^{r-1-2j}}.
\end{equation}

If $J\subseteq I$ and $\gb\lambda\in\cal P_q$, let $\gb\lambda_J$ be the associated $J$-tuple of rational functions. Notice that, if $\gb\lambda_j(u)\in\mathbb C(q_j)(u)$ for all $j\in J$, $\gb\lambda_J$ can be regarded as an element of the $\ell$-weight lattice of $U_q(\tlie g_J)$. Let also $\gb\lambda^J\in\cal P_q$ be such that $(\gb\lambda^J)_j(u)=\gb\lambda_j(u)$ for every $j\in J$ and $(\gb\lambda^J)_j(u)=1$ otherwise.

Given $i\in I$ and $a\in\mathbb C(q)^\times$, define the simple $\ell$-root $\gb\alpha_{i,a}$ by
\begin{equation}
\gb\alpha_{i,a} = \gb\omega_{i,aq,2}\prod_{j\ne i} \gb\omega_{j,aq,-c_{j,i}}^{-1}.
\end{equation}
The subgroup of $\cal P_q$ generated by the simple $\ell$-roots is called the $\ell$-root lattice of $U_q(\tlie g)$ and will be denoted by $\cal Q_q$. Let also $\cal Q_q^+$ be the submonoid generated by the simple $\ell$-roots. Quite clearly $\wt(\gb\alpha_{i,a})=\alpha_i$. Define a partial order on $\cal P_q$ by
$$\gb\mu\le\gb\lambda \qquad{if}\qquad \gb\lambda\gb\mu^{-1}\in\cal Q_q^+.$$

\begin{rem}
The elements $\gb\alpha_{i,a}$ were first defined in \cite{freres:qchar} where they were denoted by $A_{i,aq}$. The term simple $\ell$-root was introduced in \cite{cm:chb} where an alternate definition in terms of an action of the braid group of $\lie g$ on $\cal P_q$ was given. For more details on the $\ell$-weight lattice see \cite[Section 3]{jm:root} and the references therein.
\end{rem}

\section{Finite-dimensional representations}\label{s:fdimreps}

\subsection{Simple Lie algebras}
For the sake of fixing notation, we now review some basic facts about the representation theory of $\lie g$ and $U_q(\lie g)$. For the details see \cite{hum:book} and \cite{cp:book} for instance.

Given a $U_q(\lie g)$-module $V$ and $\mu\in P$, let
$$V_\mu=\{v\in V: k_iv=q^{\mu(h_i)}v \text{ for all } i\in I\}.$$
A nonzero vector $v\in V_\mu$ is called a weight vector of weight $\mu$. If $v$ is a weight vector such that $x_i^+v = 0$ for all $i\in I$, then $v$ is called a highest-weight vector. If $V$ is generated by a highest-weight vector of weight $\lambda$, then $V$ is said to be a highest-weight module of highest weight $\lambda$.
A $U_q(\lie g)$-module $V$ is said to be a weight module if $V=\opl_{\mu\in P}^{} V_\mu$.  Denote by $\cal C_q$ be the category of all finite-dimensional weight modules of $U_q(\lie g)$. Analogous concepts for $\lie g$-modules are defined similarly after setting
$$V_\mu=\{v\in V: hv=\mu(h)v \text{ for all } h\in\lie h\}.$$
Denote by $\cal C$ the category of finite-dimensional $\lie g$-modules.

Let $\mathbb Z[P]$ be the integral group ring over $P$ and denote by $e:P\to\mathbb Z[P], \lambda\mapsto e^\lambda$, the inclusion of $P$ in $\mathbb Z[P]$ so that $e^\lambda e^\mu=e^{\lambda+\mu}$. The character of an object $V$ from $\cal C_q$ or $\cal C$ is defined by
\begin{equation}\label{e:chd}
\ch(V) = \sum_{\mu\in P} \dim(V_\mu) e^\mu.
\end{equation}

The following theorem summarizes the basic facts about the categories $\cal C_q$ and $\cal C$.

\begin{thm}\label{t:ciuqg} Let $V$ be an object either of $\cal C_q$ or of $\cal C$. Then:
\begin{enumerate}
\item $\dim V_\mu = \dim V_{w\mu}$ for all $w\in\cal W$.
\item $V$ is completely reducible.
\item For each $\lambda\in P^+$, the $\lie g$-module $V(\lambda)$ generated by a vector $v$ satisfying
$$x_i^+v=0, \qquad h_iv=\lambda(h_i)v, \qquad (x_i^-)^{\lambda(h_i)+1}v=0,\quad\forall\ i\in I,$$
is irreducible and finite-dimensional. If $V\in\cal C$ is irreducible, then
$V$ is isomorphic to $V(\lambda)$ for some $\lambda\in P^+$.
\item For each $\lambda\in P^+$ the $U_q(\lie g)$-module $V_q(\lambda)$ generated by a vector $v$ satisfying
$$x_i^+v=0, \qquad k_iv=q^{\lambda(h_i)}v, \qquad (x_i^-)^{\lambda(h_i)+1}v=0,\quad\forall\ i\in I,$$
is irreducible and finite-dimensional. If $V\in\cal C_q$ is irreducible, then
$V$ is isomorphic to $V_q(\lambda)$  for some $\lambda\in P^+$.
\item For all $\lambda\in P^+$,  $\ch(V_q(\lambda)) = \ch(V(\lambda))$.
\hfill\qedsymbol
\end{enumerate}
\end{thm}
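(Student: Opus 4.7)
The statement is a standard compilation of well-known results, so the plan is to follow the classical strategy for $\lie g$-modules and then transfer it to the quantum setting by means of the integral form and the specialization isomorphism already recorded in Proposition \ref{p:cluq}. I would treat the classical cases in (c) and (a) first, then bootstrap to (b), and finally deduce (d) and (e) from the classical results together with the $\mathbb A$-form machinery.

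First, for part (c), given $\lambda\in P^+$ I would construct $V(\lambda)$ as the quotient of the Verma module $M(\lambda)=U(\lie g)\otimes_{U(\lie b)}\mathbb C_\lambda$ by the submodule generated by $(x_i^-)^{\lambda(h_i)+1}v_\lambda$ for $i\in I$. The hard work is to check finite-dimensionality: one restricts to each $\lie{sl}_2$-triple $\{x_i^\pm,h_i\}$ to see that the image of the highest weight vector generates a finite-dimensional $\lie{sl}_2$-module, then uses that the set of weights of $V(\lambda)$ is stable under each simple reflection $s_i$ (because on $\lie{sl}_2$-strings weights are swapped by $s_i$) and hence under $\cal W$; since $\cal W\lambda$ is finite and each weight space lies below a dominant Weyl translate, $V(\lambda)$ is finite-dimensional. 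Irreducibility follows from the standard argument that any submodule must contain a highest-weight vector, which would have to be a scalar multiple of $v_\lambda$. The uniqueness statement follows by picking a highest-weight vector in any finite-dimensional irreducible and reading off its weight. Part (a) is a byproduct of this construction, since the weight set is visibly Weyl-invariant.

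Next, for (b), on the classical side I would invoke the Casimir element of $U(\lie g)$, which acts by distinct scalars on non-isomorphic $V(\lambda)$, together with Weyl's unitary trick (or equivalently an explicit construction via Jordan decomposition): any short exact sequence of objects in $\cal C$ splits because extensions $\mathrm{Ext}^1_{\lie g}(V(\lambda),V(\mu))$ vanish, either by the central character argument when $\lambda\ne \mu$ or by an explicit averaging argument when $\lambda=\mu$. For the quantum case I would use the construction of $V_q(\lambda)$ as the quotient of the quantum Verma module $M_q(\lambda)$ by the relations generated by $(x_i^-)^{\lambda(h_i)+1}v_\lambda$; the weight-module structure and finite-dimensionality come from the $U_{q}(\lie{sl}_2)$-subalgebra $\langle x_i^\pm,k_i^{\pm1}\rangle$, where the representation theory of $U_q(\lie{sl}_2)$ (for generic $q$) is parallel to the classical one. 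Complete reducibility for $\cal C_q$ is the quantum analogue and can be deduced either from Rosso's quantum Casimir argument or by specialization: any object $V$ of $\cal C_q$ admits a $U_{\mathbb A}(\lie g)$-lattice whose specialization at $q=1$ is a classical module, and an $\mathbb A$-splitting of the classical decomposition can be lifted back.

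Finally, for (e) the cleanest route is again via the $\mathbb A$-form. Fix $\lambda\in P^+$ and let $V_{\mathbb A}(\lambda)=U_{\mathbb A}(\lie g)\cdot v_\lambda\subseteq V_q(\lambda)$. One verifies that $V_{\mathbb A}(\lambda)$ is a free $\mathbb A$-module whose weight-space ranks match those of $V_q(\lambda)$, and that $\mathbb C\otimes_{\mathbb A} V_{\mathbb A}(\lambda)$ is a finite-dimensional highest-weight $U(\lie g)$-module of highest weight $\lambda$; by Proposition \ref{p:cluq} it carries a $\lie g$-action, and by the classification in (c) it must be $V(\lambda)$. Matching weight-space dimensions gives $\ch V_q(\lambda)=\ch V(\lambda)$. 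The main technical point throughout the plan is this specialization step: one must produce the $\mathbb A$-lattice compatibly with the weight decomposition, which is the standard Kostant-Lusztig integral form construction, and this is precisely where I would expect to spend the most care, citing \cite{cp:book,lus:book} for the details already catalogued there.
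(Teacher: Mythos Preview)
Your outline is a reasonable and essentially correct sketch of the standard proofs, but note that the paper itself does not prove Theorem \ref{t:ciuqg} at all: the statement is closed with a \qedsymbol\ and the preceding sentence simply refers the reader to \cite{hum:book} and \cite{cp:book} for the details. In other words, the paper treats this as background material rather than something to be established.

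So there is nothing to compare in terms of approach: what you wrote is precisely the content one would find in those references (Verma module quotients and $\lie{sl}_2$-restriction for (c) and (a), Casimir/central character arguments for (b), Lusztig's $\mathbb A$-form and specialization for (d) and (e)). If your goal was to supply the missing argument, your plan is fine; if your goal was to match the paper, the correct response is simply to cite the standard sources and move on.
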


If $J\subseteq I$ we shall denote by $V_q(\lambda_J)$ the simple $U_q(\lie g_J)$-module of highest weight $\lambda_J$. Similarly $V(\lambda_J)$ denotes the corresponding irreducible $\lie g_J$-module.

\begin{prop}\label{p:restriction}
Let $\lambda\in P^+, J\subseteq I$, and suppose $v\in V_q(\lambda)_\lambda$ (respectively $v\in V(\lambda)_\lambda$) is nonzero. Then $U_q(\lie g_J)v\cong V_q(\lambda_J)$ (respectively $U(\lie g_J)v\cong V(\lambda_J)$).\hfill\qedsymbol
\end{prop}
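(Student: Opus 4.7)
The plan is to verify that $v$ satisfies the defining relations of the irreducible highest-weight module $V_q(\lambda_J)$ stated in Theorem~\ref{t:ciuqg}(d), and then to invoke irreducibility of $V_q(\lambda_J)$ to upgrade the resulting surjection to an isomorphism.

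First I would observe that $v\in V_q(\lambda)_\lambda$ is a highest-weight vector for the whole $U_q(\lie g)$, so in particular $x_j^+v=0$ and $(x_j^-)^{\lambda(h_j)+1}v=0$ for every $j\in J\subseteq I$. Moreover, directly from the definition of $V_q(\lambda)_\lambda$, we have $k_jv=q^{\lambda(h_j)}v=q^{\lambda_J(h_j)}v$ for every $j\in J$, since by construction $\lambda_J$ is just the restriction of $\lambda$ to $\lie h_J$ and $\lambda_J(h_j)=\lambda(h_j)$ for $j\in J$. Thus $v$ satisfies, with respect to the subalgebra $U_q(\lie g_J)$, exactly the three conditions listed in Theorem~\ref{t:ciuqg}(d) for the highest-weight vector generating $V_q(\lambda_J)$.

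Next I would use the universal property implicit in Theorem~\ref{t:ciuqg}(d): the module $V_q(\lambda_J)$ is a quotient of the Verma-type module defined by precisely these three families of relations on a cyclic highest-weight generator, and is moreover irreducible. Applying this universal property to the cyclic $U_q(\lie g_J)$-module $U_q(\lie g_J)v\subseteq V_q(\lambda)$, we obtain a surjective $U_q(\lie g_J)$-module homomorphism $V_q(\lambda_J)\twoheadrightarrow U_q(\lie g_J)v$ sending the canonical generator to $v$. Because $v\ne 0$, the image $U_q(\lie g_J)v$ is nonzero, and because $V_q(\lambda_J)$ is simple by Theorem~\ref{t:ciuqg}(d), the kernel of this surjection must be zero. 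Hence the map is an isomorphism $U_q(\lie g_J)v\cong V_q(\lambda_J)$.

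The classical statement is handled by an identical argument using Theorem~\ref{t:ciuqg}(c) in place of (d): one checks $x_j^+v=0$, $h_jv=\lambda_J(h_j)v$, and $(x_j^-)^{\lambda(h_j)+1}v=0$ for $j\in J$, and concludes by simplicity of $V(\lambda_J)$ that $U(\lie g_J)v\cong V(\lambda_J)$. There is no real obstacle here; the only point worth checking is that the assumed nonzeroness of $v$ is genuinely needed and used (to rule out the trivial quotient), which is automatic from the hypothesis $v\in V_q(\lambda)_\lambda\setminus\{0\}$.
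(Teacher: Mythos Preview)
Your proof is correct and is the standard argument. The paper itself does not supply a proof for this proposition (it is stated with an immediate \qedsymbol, as a well-known fact), so there is no alternative approach to compare against; your verification that $v$ satisfies the defining relations of Theorem~\ref{t:ciuqg}(d) (respectively (c)) and your appeal to the simplicity of $V_q(\lambda_J)$ (respectively $V(\lambda_J)$) to upgrade the surjection to an isomorphism is exactly what the omitted argument would be.
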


Assume $\lie g=\lie g_1\oplus\lie g_2$ where $\lie g_j$ are semisimple Lie algebras. Then $P=P_1\times P_2$ where $P_j$ is the weight lattice of $\lie g_j$ for $j=1,2$, and so on. Given $\lambda\in P_j^+$, denote by $V_j(\lambda)$ the irreducible $\lie g_j$-module of highest-weight $\lambda$. If $V_1$ is a $\lie g_1$-module and $V_2$ is a $\lie g_2$-module, then $V_1\otimes V_2$ is naturally a $\lie g$-module.

\begin{prop}\label{p:sumofalgs}
Let $\lambda=(\lambda_1,\lambda_2)\in P^+$ and $\mu=(\mu_1,\mu_2)\in P$. Then:
\begin{enumerate}
\item $V(\lambda)\cong V_1(\lambda_1)\otimes V_2(\lambda_2)$ as $\lie g$-modules.
\item $V(\lambda)_\mu\cong (V_1(\lambda_1)_{\mu_1})\otimes (V_2(\lambda_2)_{\mu_2})$ as $\lie h$-modules.\hfill\qedsymbol
\end{enumerate}
\end{prop}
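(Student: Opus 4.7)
The plan is to construct a $\lie g$-module isomorphism $W:=V_1(\lambda_1)\otimes V_2(\lambda_2)\cong V(\lambda)$ and then read off (b) from the weight-space decomposition of $W$. Throughout, fix highest-weight vectors $v_j\in V_j(\lambda_j)_{\lambda_j}$ and regard $W$ as a $\lie g=\lie g_1\oplus\lie g_2$-module via its two tensor factors.

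First I would observe that $v_1\otimes v_2$ is a $\lie g$-highest-weight vector of weight $\lambda$: every simple root of $\lie g$ belongs to exactly one of $\lie g_1,\lie g_2$, so each $x_i^+$ annihilates $v_1\otimes v_2$; and $\lie h=\lie h_1\oplus\lie h_2$ acts with eigenvalue $\lambda=(\lambda_1,\lambda_2)$ under the identification $P=P_1\times P_2$. Next, since $\lie n^-=\lie n^-_1\oplus\lie n^-_2$ with $[\lie n^-_1,\lie n^-_2]=0$, the PBW theorem gives $U(\lie n^-)=U(\lie n^-_1)U(\lie n^-_2)$, so
$$U(\lie g)(v_1\otimes v_2)=\bigl(U(\lie n^-_1)v_1\bigr)\otimes\bigl(U(\lie n^-_2)v_2\bigr)=V_1(\lambda_1)\otimes V_2(\lambda_2)=W.$$
Hence $W$ is a cyclic highest-weight $\lie g$-module of highest weight $\lambda$; in particular every weight of $W$ is $\le\lambda$.

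The second step is the weight-space decomposition. Since $\lie h=\lie h_1\oplus\lie h_2$ and $P=P_1\times P_2$, a pure tensor of weight vectors is again a weight vector, and comparing eigenvalues under $\lie h_1$ and $\lie h_2$ separately yields
$$W_\mu=V_1(\lambda_1)_{\mu_1}\otimes V_2(\lambda_2)_{\mu_2}\qquad\text{for every } \mu=(\mu_1,\mu_2)\in P.$$
In particular $\dim W_\lambda=1$.

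To close, I would invoke Theorem~\ref{t:ciuqg}(b): $W$ is completely reducible, and because every weight of $W$ is $\le\lambda$, each simple summand is isomorphic to $V(\eta)$ for some $\eta\le\lambda$. The one-dimensionality of $W_\lambda$ forces exactly one summand to be isomorphic to $V(\lambda)$, and since summands $V(\eta)$ with $\eta<\lambda$ contain no weight-$\lambda$ vector, the cyclic generator $v_1\otimes v_2$ must lie in the $V(\lambda)$-summand. Together with $W=U(\lie g)(v_1\otimes v_2)$ this forces $W\cong V(\lambda)$, proving (a); part (b) is then the displayed weight-space formula transported through this isomorphism. The only delicate point is ruling out additional simple summands, which is handled by the cyclicity established in the second step, so no genuine obstacle is expected — the statement is entirely standard.
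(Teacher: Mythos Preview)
Your argument is correct and complete; the paper does not actually supply a proof of this proposition, marking it instead with a \qedsymbol\ as a standard fact. Your approach is exactly the expected one for this well-known result.
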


We will need the following elementary lemma (a proof can be found in \cite[Lemma 2.3]{mou:reslim}).

\begin{lem}\label{l:hwvecs}
Let $V$ be a finite-dimensional $\lie g$-module and suppose $l\in\mathbb Z_{\ge 1}, \nu_k\in P, v_k\in V_{\nu_k}$, for $k=1,\dots,l$, are such that $V=\sum_{k=1}^l U(\lie n^-)v_k$. Fix a decomposition $V= \opl_{j=1}^m V_j$ where $m\in\mathbb Z_{\ge 1},  V_j\cong V(\mu_j)$ for some $\mu_j\in P^+$, and let $\pi_j:V\to V_j$ be the associated projection for $j=1,\dots,m$. Then, there exist distinct $k_1,\dots,k_m\in\{1,\dots,l\}$ such that $\nu_{k_j}=\mu_j$ and $\pi_j(v_{k_j})\ne 0$.\hfill\qedsymbol
\end{lem}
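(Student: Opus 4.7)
The plan is to apply Hall's marriage theorem. I would set up the bipartite graph on vertex classes $A=\{1,\dots,m\}$ and $B=\{1,\dots,l\}$ with an edge between $j\in A$ and $k\in B$ if and only if $\nu_k=\mu_j$ and $\pi_j(v_k)\ne 0$; the desired $k_1,\dots,k_m$ exist precisely when this graph admits a matching saturating $A$. By Hall's theorem it then suffices to prove $|N(T)|\ge|T|$ for every $T\subseteq A$, where $N(T)$ denotes the neighbourhood of $T$ in $B$.

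Because the neighbourhood of $j$ is contained in $\{k:\nu_k=\mu_j\}$ and these ambient sets are pairwise disjoint whenever $\mu_j\ne\mu_{j'}$, one may partition $T$ according to the value of $\mu_j$ and reduce to the case where there exists $\lambda\in P^+$ with $\mu_j=\lambda$ for all $j\in T$. Set $W=\bigoplus_{j\in T}V_j\cong V(\lambda)^{\oplus|T|}$ and $\pi_T=\sum_{j\in T}\pi_j:V\to W$. Applying $\pi_T$ to the hypothesis $V=\sum_{k}U(\lie n^-)v_k$ yields $W=\sum_{k}U(\lie n^-)\pi_T(v_k)$, and $\dim W_\lambda=|T|$ since each summand $V(\lambda)$ contributes a one-dimensional $\lambda$-weight space.

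The key claim will be that $W_\lambda=\mathrm{span}\{\pi_T(v_k):\nu_k=\lambda\}$. Given $w\in W_\lambda$, I would write $w=\sum_k y_k\,\pi_T(v_k)$ with $y_k\in U(\lie n^-)$ and decompose each $y_k$ by its $\lie h$-weight. The $\lambda$-weight part of $y_k\,\pi_T(v_k)$ is $(y_k)_{-(\nu_k-\lambda)}\pi_T(v_k)$, which is nonzero only when $\nu_k-\lambda\in Q^+$; on the other hand $\lambda$ is the highest weight of $W$, so $\pi_T(v_k)\ne 0$ forces $\nu_k\le\lambda$. Together these yield $\nu_k=\lambda$, in which case the relevant component of $y_k$ lies in $U(\lie n^-)_0=\mathbb C$, proving the claim.

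Since $\dim W_\lambda=|T|$, the spanning set $\{\pi_T(v_k):\nu_k=\lambda\}$ must contain at least $|T|$ linearly independent elements, and hence that many distinct indices $k$ with $\pi_T(v_k)\ne 0$. Any such $k$ satisfies $\pi_j(v_k)\ne 0$ for some $j\in T$ together with $\nu_k=\lambda=\mu_j$, so $k\in N(T)$ and therefore $|N(T)|\ge|T|$, completing the verification of Hall's condition. I expect the main subtlety to be the weight-grading argument in the key claim, which relies essentially on the reduction to the isotypic situation, where $\lambda$ is actually maximal in $W$; without this reduction one would only control the top weight of $V$, not of individual isotypic components, and the system of distinct representatives would fail.
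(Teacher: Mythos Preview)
Your argument is correct. The reduction to the isotypic case is clean, and the key claim that $W_\lambda$ is spanned by the $\pi_T(v_k)$ with $\nu_k=\lambda$ is exactly right: any contribution to $W_\lambda$ from $U(\lie n^-)\pi_T(v_k)$ must come from the weight-zero piece of $U(\lie n^-)$, forcing $\nu_k\ge\lambda$, while $\pi_T(v_k)\ne 0$ forces $\nu_k\le\lambda$ since $\lambda$ is the highest weight of $W$. The count $\dim W_\lambda=|T|$ then gives enough neighbours, and Hall's theorem finishes the job.

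The paper does not actually prove this lemma; it records it as elementary and cites \cite[Lemma~2.3]{mou:reslim} for a proof. Your use of Hall's marriage theorem is a natural way to package the combinatorics of choosing distinct representatives, and the underlying representation-theoretic input (that the highest-weight space of an isotypic block is spanned by the projections of those generators of matching weight) is precisely what any proof must extract. One could alternatively argue by induction on $m$, peeling off a maximal $\mu_j$ and quotienting, but your approach avoids that bookkeeping and makes the role of the isotypic decomposition more transparent.
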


\subsection{Loop algebras}

Let $V$ be a $U_q(\tlie g)$-module. We say that a nonzero vector $v\in V$ is an $\ell$-weight vector if there exists $\gb\lambda\in\cal P_q$ and $k\in\mathbb Z_{>0}$ such that $(\eta-\gb\Psi_\gb\lambda(\eta))^kv=0$ for all $\eta\in U_q(\tlie h)$. In that case, $\gb\lambda$ is said to be the $\ell$-weight of $v$. $V$ is said to be an $\ell$-weight module if every vector of $V$ is a linear combination of $\ell$-weight vectors. In that case, let $V_\gb\lambda$ denote the subspace spanned by all $\ell$-weight vectors of $\ell$-weight $\gb\lambda$.
An $\ell$-weight vector $v$ is said to be a highest-$\ell$-weight vector if $\eta v=\gb\Psi_\gb\lambda(\eta)v$ for every $\eta\in U_q(\tlie h)$ and $x_{i,r}^+v=0$ for all $i\in I$ and all $r\in\mathbb Z$. $V$ is said to be a highest-$\ell$-weight module if it is generated by a highest-$\ell$-weight vector. Denote by $\wcal C_q$ the category of all finite-dimensional $\ell$-weight modules of $U_q(\tlie g)$.
Quite clearly $\wcal C_q$ is an abelian category.

Observe that if $V\in\wcal C_q$, then $V\in\cal C_q$ and
\begin{equation}\label{e:lws}
V_\lambda = \opl_{\gb\lambda:\wt(\gb\lambda)=\lambda}^{} V_\gb\lambda.
\end{equation}
Moreover, if $V$ is a highest-$\ell$-weight module of highest $\ell$-weight $\gb\lambda$, then
\begin{equation}\label{e:hw}
\dim(V_{\wt(\gb\lambda)}) = 1\qquad\text{and}\qquad V_\mu\ne 0 \Rightarrow \mu\le\wt(\gb\lambda).
\end{equation}

Define the concepts of $\ell$-weight vector, etc., for $\tlie g$ in a similar way  and denote by $\wcal C$ the category of all finite-dimensional $\tlie g$-modules. The next proposition is easily established using \eqref{e:hw}.

\begin{prop}
If $V$ is a highest-$\ell$-weight module, then it has a unique proper submodule and, hence, a unique irreducible quotient.\hfill\qedsymbol
\end{prop}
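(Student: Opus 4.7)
The plan is to mimic the standard argument for highest-weight modules over $\lie g$, the only new ingredient being that the weight filtration from \eqref{e:hw} now comes from the $\ell$-weight data. Let $v \in V$ be a highest-$\ell$-weight generator and write $\gb\lambda$ for its $\ell$-weight and $\lambda = \wt(\gb\lambda)$.

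First I would show that every proper submodule $W \subsetneq V$ satisfies $W_\lambda = 0$. By \eqref{e:hw} the weight space $V_\lambda$ is one-dimensional, spanned by $v$. If $W_\lambda$ were nonzero it would coincide with $\mathbb C v$, so $v \in W$; but $V = U_q(\tlie g) v$, which would force $W = V$, contradicting properness. Note this step uses nothing beyond the weight-space decomposition \eqref{e:lws} and the dimension assertion in \eqref{e:hw}.

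Next, let $N$ be the sum of all proper submodules of $V$. Then $N$ is a submodule, and since the weight-space decomposition is compatible with sums of submodules,
\[
N_\lambda \;=\; \sum_{W \subsetneq V} W_\lambda \;=\; 0,
\]
by the previous step. In particular $v \notin N$, so $N \ne V$. Thus $N$ is itself a proper submodule and, by construction, contains every other proper submodule; it is therefore the unique maximal proper submodule of $V$. The quotient $V/N$ is the unique irreducible quotient, as claimed.

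No step here is a genuine obstacle: once \eqref{e:hw} is in hand, the argument is the familiar ``highest weight space is a line, so the sum of proper submodules is again proper'' pattern, applied with $P$-weights in place of any finer $\ell$-weight bookkeeping. The only mild point worth checking is that the proposition is not stated under the finite-dimensionality hypothesis of $\wcal C_q$, but this plays no role in the argument above, which works for any highest-$\ell$-weight $U_q(\tlie g)$-module (and, mutatis mutandis, for $\tlie g$-modules).
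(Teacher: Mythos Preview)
Your argument is correct and is precisely the standard one the paper has in mind: the text does not spell out a proof but simply notes that the proposition ``is easily established using \eqref{e:hw},'' i.e., the one-dimensionality of $V_\lambda$ forces every proper submodule to miss $v$, so their sum is still proper. There is nothing to add.
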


\begin{defn}
Let $\gb\lambda\in \cal P_q^+$ and $\lambda=\wt(\gb\lambda)$. The Weyl module $W_q(\gb\lambda)$ of highest $\ell$-weight $\gb\lambda$ is the $U_q(\tlie g)$-module defined by the quotient of $U_q(\tlie g)$ by the left ideal generated by the elements $x_{i,r}^+, (x_{i,r}^-)^{\lambda(h_i)+1}$, and $\eta-\gb\Psi_\gb\lambda(\eta)$ for every $i\in I, r\in\mathbb Z$, and $\eta\in U_q(\tlie h)$. Denote by $V_q(\gb\lambda)$ the irreducible quotient of $W_q(\gb\lambda)$. The Weyl module $W(\gb\lambda), \gb\lambda\in \cal P^+$, of $\tlie g$ is defined in a similar way. Its irreducible quotient will be denoted by $V(\gb\lambda)$.
\end{defn}

The next theorem was proved in \cite{cp:weyl}.

\begin{thm}\label{t:weyl}
For every $\gb\lambda\in\cal P_q^+$ (resp. $\cal P^+$) the module $W_q(\gb\lambda)$ (resp. $W(\gb\lambda)$) is the universal finite-dimensional $U_q(\tlie g)$-module (resp. $\tlie g$-module) with highest $\ell$-weight $\gb\lambda$. Every simple object of $\wcal C_q$ (resp. $\wcal C$) is highest-$\ell$-weight. \hfill\qedsymbol
\end{thm}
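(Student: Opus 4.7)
The plan is to treat the two assertions separately: universality of $W_q(\gb\lambda)$ among finite-dimensional highest-$\ell$-weight modules, and existence of a highest-$\ell$-weight vector inside every simple object of $\wcal C_q$. The classical statements for $W(\gb\lambda)$ follow by parallel arguments (or, where convenient, by specialization from the quantum case via Proposition \ref{p:cluq}).

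\textbf{Universality.} Let $V$ be a finite-dimensional highest-$\ell$-weight $U_q(\tlie g)$-module of highest $\ell$-weight $\gb\lambda\in\cal P_q^+$, with cyclic generator $v$, and write $\lambda=\wt(\gb\lambda)$. The only defining relation of $W_q(\gb\lambda)$ that is not automatic from the hypotheses is $(x_{i,r}^-)^{\lambda(h_i)+1}v=0$ for each $i\in I$ and $r\in\mathbb Z$. Restricting to the subalgebra $U_q(\tlie g_i)\cong U_q(\tlie{sl}_2)$, the submodule $U_q(\tlie g_i)v$ is finite-dimensional and highest-$\ell$-weight with highest $\ell$-weight $\gb\lambda_i$, a polynomial of degree $\lambda(h_i)$; this reduces the problem to the rank-one case. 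For $U_q(\tlie{sl}_2)$, I would use Garland-type identities in the Drinfeld presentation to rewrite $(x_{1,r}^-)^{m+1}$, modulo the right ideal generated by the $x_{1,s}^+$, as noncommutative polynomials in the $h_{1,s}$ times $(x_{1,0}^-)^{m+1}$. Since each $h_{1,s}$ acts on $v$ by a scalar determined by $\gb\Psi_{\gb\lambda_i}$, and $(x_{1,0}^-)^{m+1}v=0$ by Theorem \ref{t:ciuqg}(d) applied to the $U_q(\lie g_i)$-submodule generated by $v$, the required annihilation follows.

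\textbf{Simple objects are highest-$\ell$-weight.} Let $V$ be a simple object of $\wcal C_q$. Since $V$ has finitely many nonzero weight spaces, the set of weights admits a maximal element $\lambda$ in the partial order $\le$. The commutation $k_ix_{j,r}^+k_i^{-1}=q^{c_{ij}}x_{j,r}^+$ shows that $x_{j,r}^+$ raises weight by $\alpha_j$, so $x_{j,r}^+$ annihilates $V_\lambda$ for every $j\in I$ and $r\in\mathbb Z$. Because $V$ is an $\ell$-weight module, $V_\lambda$ decomposes as a direct sum of $\ell$-weight spaces $V_{\gb\mu}$ with $\wt(\gb\mu)=\lambda$, and any nonzero $v$ in such a $V_{\gb\lambda'}$ is a highest-$\ell$-weight vector. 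The submodule it generates is nonzero, hence equals $V$ by simplicity, and combining with universality forces $\gb\lambda'\in\cal P_q^+$.

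\textbf{Main obstacle.} The technical core, which I would cite from \cite{cp:weyl} rather than redo, is that $W_q(\gb\lambda)$ is \emph{itself} finite-dimensional; without this, the universality statement is vacuous. The standard route fixes an ordered PBW basis of $U_q(\tlie n^-)$ in terms of root vectors $(x_{\beta,r}^-)^{(k)}$, uses quantum Garland identities together with the scalar action of $U_q(\tlie h)$ on the cyclic generator to bound the exponents of these divided powers, and ultimately reduces the bound to the $U_q(\tlie{sl}_2)$ situation, where $W_q(\gb\lambda)$ is realized as a quotient of a tensor product of evaluation representations at generic spectral parameters. Once this finiteness is available, the universality argument and the simple-object argument above combine to give the theorem.
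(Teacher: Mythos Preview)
The paper does not prove Theorem~\ref{t:weyl}; it is quoted from \cite{cp:weyl} with a tombstone and no argument, so there is nothing in the paper to compare your sketch against. Your outline follows the structure of \cite{cp:weyl}, and you correctly isolate finite-dimensionality of $W_q(\gb\lambda)$ as the substantive input to be cited rather than reproved.

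Two minor corrections are worth making. In the simple-object argument you write that ``any nonzero $v$ in such a $V_{\gb\lambda'}$ is a highest-$\ell$-weight vector''; but by the paper's definition a highest-$\ell$-weight vector must be a \emph{genuine} eigenvector for $U_q(\tlie h)$, whereas $V_{\gb\lambda'}$ is only the generalized eigenspace. You should instead choose a true simultaneous eigenvector inside $V_\lambda$, which exists since $U_q(\tlie h)$ is commutative and the $\ell$-weight hypothesis guarantees the relevant eigenvalues already lie in $\mathbb C(q)$. In the universality argument, the Garland-type rewriting you describe (expressing $(x_{1,r}^-)^{m+1}$ as polynomials in the $h_{1,s}$ times $(x_{1,0}^-)^{m+1}$) is not quite an identity that holds; a cleaner route is to note that for each fixed $r$ the elements $x_{i,-r}^+,\ x_{i,r}^-,\ k_i^{\pm 1}$ satisfy the defining relations of $U_q(\lie{sl}_2)$ (in particular $[x_{i,-r}^+,x_{i,r}^-]=(k_i-k_i^{-1})/(q-q^{-1})$), so $v$ is a highest-weight vector of weight $\lambda(h_i)$ for this copy and $(x_{i,r}^-)^{\lambda(h_i)+1}v=0$ follows directly from Theorem~\ref{t:ciuqg}. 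Neither point is a genuine gap in the strategy.
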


We shall need the following lemma which is a consequence of the proof of Theorem \ref{t:weyl}.

\begin{lem}\label{l:curalggen}
If $V$ is a highest-$\ell$-weight module of $\tlie g$ and $v$ be a highest-$\ell$-weight vector. Then $V=U(\lie g[t])v$.\hfill\qedsymbol
\end{lem}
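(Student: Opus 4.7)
The plan is to reduce the statement to the Weyl module and invoke the argument from the proof of Theorem \ref{t:weyl}, as the statement of the lemma indicates. In the context of $\wcal C$ the module $V$ is finite-dimensional, so its $\ell$-weight $\gb\lambda$ lies in $\cal P^+$, and by Theorem \ref{t:weyl} there is a surjection $W(\gb\lambda) \twoheadrightarrow V$ of $\tlie g$-modules sending the canonical generator $\bar 1$ to $v$. Thus it suffices to show $W(\gb\lambda) = U(\lie g[t])\bar 1$, as the surjection then transports the equality to $V = U(\lie g[t])v$.

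To prove $W(\gb\lambda) = U(\lie g[t])\bar 1$, I would first use PBW together with $\tlie n^+ \bar 1 = 0$ and $U(\tlie h)\bar 1 = \mathbb{C}\bar 1$ (the latter from $\bar 1$ being an $\ell$-weight vector) to reduce to showing $U(\tlie n^-)\bar 1 \subseteq U(\lie g[t])\bar 1$. Setting $W := U(\lie g[t])\bar 1$ and proceeding by induction on $\eta \in Q^+$, the containment $W(\gb\lambda)_{\lambda-\eta} \subseteq W$ reduces to single-generator cases $g\bar 1 \in W$ for $g$ in $\tlie g_- := \lie g\otimes t^{-1}\mathbb{C}[t^{-1}]$. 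The cases $g = x_{i,-s}^+$ and $g = h_{i,-s}$ give $0$ and a scalar multiple of $\bar 1$ respectively, leaving only $g = x_{\alpha,-s}^-$ for $\alpha\in R^+$ and $s>0$. With these base cases in hand, the weight induction closes by expanding $x_{\alpha,-s}^-\, Z\bar 1 = Z\,x_{\alpha,-s}^-\bar 1 + [x_{\alpha,-s}^-, Z]\bar 1$ for $Z \in U(\lie g[t])$, and handling the commutator via a secondary induction on negative-$t$-depth (which commutation with $\lie g[t]$ never worsens).

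The main obstacle is the case $x_{\alpha,-s}^-\bar 1 \in W$. A naive identity such as $x_{\alpha,-s}^- = -\alpha(h_i)^{-1}[h_{i,-s}, x_{\alpha,0}^-]$ is tautological when applied to $\bar 1$: substituting back the commutator of $h_{i,-s}$ through $x_{\alpha,0}^-$ reconstructs the starting expression. The fix, as in \cite{cp:weyl}, is to invoke the Weyl-module defining relations $(x_{i,r}^-)^{\lambda(h_i)+1}\bar 1 = 0$ for every $r \in \mathbb{Z}$ together with Garland-type identities derived from the exponential formula \eqref{e:Lambdadef} and the annihilation $x_{i,r}^+\bar 1 = 0$. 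For simple roots $\alpha = \alpha_i$, these identities---which encode the scalars $\gb\Psi_\gb\lambda(\Lambda_{i,r})$, vanishing for large $r$ since $\gb\lambda_i(u)$ is a polynomial---express $x_{i,-s}^-\bar 1$ genuinely in terms of elements of $U(\lie n^-[t])\bar 1$; the non-simple-root case reduces to the simple case by induction on the height of $\alpha$ via $x_\alpha^- = [x_{\alpha-\alpha_i}^-, x_i^-]$ for a choice of $i$ with $\alpha-\alpha_i \in R^+$.
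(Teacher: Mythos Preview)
Your approach matches the paper's, which gives no proof beyond declaring the lemma a consequence of the argument in \cite{cp:weyl} for Theorem~\ref{t:weyl}; you correctly identify the reduction to the Weyl module and the Garland-type identities for simple roots as the essential content. Two small cautions on the sketch: (i) commuting $x_{\alpha,-s}^-$ past a factor lying in $\lie g\otimes 1\subseteq\lie g[t]$ does not \emph{strictly} decrease the negative $t$-depth, so that secondary induction needs an additional parameter (for instance the length of the monomial $Z$) to terminate; (ii) the height reduction for non-simple $\alpha$ via $x_\alpha^-=[x_{\alpha-\alpha_i}^-,x_i^-]$, applied directly to $\bar 1$, is tautological---expanding the bracket and commuting back simply reproduces $x_{\alpha,-s}^-\bar 1$. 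Both issues are routine to repair: one clean route is to use that $\tlie n^-$ is generated as a Lie algebra by the simple-root vectors $x_{i,r}^-$, so that $W(\gb\lambda)_{\lambda-\eta}=\sum_{i,r} x_{i,r}^-\,W(\gb\lambda)_{\lambda-\eta+\alpha_i}$ and one never needs to move a non-simple $x_{\alpha,-s}^-$ at all; the induction on $\het(\eta)$ then closes once the simple-root case is in hand. The detailed bookkeeping is in \cite{cp:weyl}.
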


If $J\subseteq I$ we shall denote by $V_q(\gb\lambda_J)$ the $U_q(\tlie g_J)$-irreducible module of highest $\ell$-weight $\gb\lambda_J$. Similarly $V(\gb\lambda_J)$ denotes  the corresponding irreducible $\tlie g_J$-module. Similar notations for the Weyl modules are defined in the obvious way.

The next theorem was conjectured in \cite{freres:qchar} and proved in \cite{fremuk:qchar}.

\begin{thm}\label{t:cone}
Let $V$ be a quotient of $W_q(\gb\lambda)$ for some $\gb\lambda\in\cal P_q^+$. If $V_\gb\mu\ne 0$, then $\gb\mu\le\gb\lambda$.\hfill\qedsymbol
\end{thm}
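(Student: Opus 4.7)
The plan is to reduce to the universal case $V=W_q(\gb\lambda)$ and then use induction on the $\ell$-root lattice height together with a detailed analysis of how the lowering generators $x_{i,r}^-$ shift $\ell$-weights.

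First I would observe that since any quotient $V$ of $W_q(\gb\lambda)$ is a $U_q(\tlie h)$-equivariant image, $V_\gb\mu \ne 0$ implies $W_q(\gb\lambda)_\gb\mu \ne 0$, so it suffices to prove the statement for $W_q(\gb\lambda)$ (or equivalently, for any highest-$\ell$-weight module with highest $\ell$-weight $\gb\lambda$). By Lemma~\ref{l:curalggen} (its quantum analogue, essentially Theorem~\ref{t:weyl}), $W_q(\gb\lambda) = U_q(\tlie n^-) v_\gb\lambda$, so every $\ell$-weight vector arises from applying a product of $x_{i,r}^-$'s to the highest-$\ell$-weight generator. Thus, once I can show
\begin{equation*}
x_{i,r}^- \cdot W_q(\gb\lambda)_\gb\mu \ \subseteq \ \sum_{a\in\mathbb C(q)^\times} W_q(\gb\lambda)_{\gb\mu\gb\alpha_{i,a}^{-1}},
\end{equation*}
the theorem follows by induction on the length of such monomials, since each application lowers the $\ell$-weight by a simple $\ell$-root and so stays in $\gb\lambda\cal Q_q^{-}$.

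The heart of the argument is therefore the displayed inclusion: the lowering generators shift $\ell$-weight by exactly an inverse simple $\ell$-root at position $i$. To establish this I would restrict the action to the subalgebra $U_q(\tlie g_i)\cong U_q(\tlie{sl}_2)$ and decompose the ambient finite-dimensional weight space into a sum of finite-dimensional $U_q(\tlie{sl}_2)$-modules. For $\lie{sl}_2$ the structure of finite-dimensional simple $\ell$-modules is completely understood (Chari-Pressley): they are tensor products of evaluation modules, their $\ell$-weights factor as products of Kirillov-Reshetikhin $\ell$-weights, and one can compute explicitly how $x_{i,r}^-$ and the generating series $\Psi_i^\pm(u)$ interact on each factor. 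Combining this with the commutation relation $[\Psi_i^\pm(u), x_{j,s}^-]$ (which can be read off from the defining relations of $U_q(\tlie g)$) gives the desired shift rule for generalized $\ell$-weight spaces, not just $\ell$-weight spaces, via a standard argument with commuting nilpotent perturbations of $\Psi_i^\pm(u)$. The formula of $\gb\alpha_{i,a}$ in terms of $\gb\omega_{i,aq,2}$ and the off-diagonal factors $\gb\omega_{j,aq,-c_{j,i}}^{-1}$ is precisely what comes out of this $U_q(\tlie{sl}_2)$-computation.

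The step I expect to be the main obstacle is controlling the off-diagonal components of the $\ell$-weight shift, i.e.\ the contribution of $\prod_{j\ne i} \gb\omega_{j,aq,-c_{j,i}}^{-1}$. The commutator $[\Psi_j^\pm(u), x_{i,s}^-]$ for $j\ne i$ is not given by a simple closed formula in the generators $x_{j,r}^\pm, h_{j,r}, k_j$; it requires careful manipulation of the Drinfeld-type relations, and to control what it does on $\ell$-weight vectors one has to work not on $V_\gb\mu$ directly but on its restriction to a $U_q(\tlie g_{\{i,j\}})$-subrepresentation of rank two. This is exactly the place where the Frenkel-Reshetikhin/Frenkel-Mukhin machinery of $q$-characters is invoked as a cleaner bookkeeping device. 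Once the shift rule is established uniformly in $i$, the rest of the proof is formal.
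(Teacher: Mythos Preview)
The paper does not supply its own proof of this theorem: it is stated with a closing \qedsymbol\ and attributed to Frenkel--Mukhin \cite{fremuk:qchar}, who proved the conjecture of Frenkel--Reshetikhin \cite{freres:qchar}. So there is no in-paper argument to compare against; the relevant question is whether your sketch matches the cited proof.

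It does, in outline. Frenkel--Mukhin's argument is exactly the one you describe: reduce to showing that each $x_{i,r}^-$ maps a generalized $\ell$-weight space $V_{\gb\mu}$ into $\sum_a V_{\gb\mu\gb\alpha_{i,a}^{-1}}$, and then induct on the length of PBW monomials in $U_q(\tlie n^-)$. The $i$-th coordinate of the shift is handled by restriction to $U_q(\tlie g_i)\cong U_q(\tlie{sl}_2)$ and the known structure of finite-dimensional $U_q(\tlie{sl}_2)$-modules, just as you say.

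One correction to your assessment of the obstacle: the off-diagonal shift (the $j\ne i$ factors of $\gb\alpha_{i,a}$) is more tractable than you suggest. The defining relation $[h_{j,r},x_{i,s}^-]=-\frac{1}{r}[rc_{ji}]\,x_{i,r+s}^-$ is already explicit, and exponentiating it yields a closed intertwining relation between $\Psi_j^\pm(u)$ and the generating series $\sum_s x_{i,s}^- u^{-s}$; this is precisely what produces the factor $\prod_{j\ne i}\gb\omega_{j,aq,-c_{j,i}}^{-1}$. So one does not need to pass to a rank-two subalgebra $U_q(\tlie g_{\{i,j\}})$ for this step. The genuinely delicate point is the $i$-th coordinate, where the relation $[h_{i,r},x_{i,s}^-]$ alone is insufficient and the $\tlie{sl}_2$ restriction (together with the fact that $\ell$-weights of finite-dimensional $U_q(\tlie{sl}_2)$-modules are rational, not merely formal power series) is essential. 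Your sketch has the right shape; the emphasis on where the difficulty lies is slightly off.
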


Given $V$ in $\wcal C_q$, let $\wtl(V) = \{\gb\mu\in\cal P_q: V_\gb\mu\ne 0\}$. We will need the following proposition proved in \cite[Section 4.8]{mou:reslim}.

\begin{prop}\label{p:lchA}
Suppose $\lie g$ is of type $A$, $\lambda\in P^+$, $\gb\lambda=\prod_{i\in I}\gb\omega_{i,a_i,\lambda(h_i)}$, $\gb\mu\in\wtl(V_q(\gb\lambda))$, and  $\gb\lambda\gb\mu^{-1} = \gb\alpha_{j,b_j}\gb\alpha_{j+1,b_{j+1}}\cdots\gb\alpha_{k,b_k}$ for some $j\le k$ and some $a_i,b_l\in\mathbb C(q)^\times, i\in I, l=j,\dots,k$.
\begin{enumerate}
\item If $\frac{a_{i+1}}{a_i}=q^{\lambda(h_i)+\lambda(h_{i+1})+1}$ for all $i<n$, then $b_k=a_kq^{\lambda(h_k)-1}$.
\item If $\frac{a_{i+1}}{a_i}=q^{-(\lambda(h_i)+\lambda(h_{i+1})+1)}$ for all $i<n$, then $b_j=a_jq^{\lambda(h_j)-1}$.\hfill\qedsymbol
\end{enumerate}
\end{prop}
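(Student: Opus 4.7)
I will sketch a proof of (a); part (b) is symmetric via the Dynkin diagram involution $i\mapsto n+1-i$. The argument proceeds by induction on the length $n':=k-j+1$ of the chain $\gb\lambda\gb\mu^{-1}=\gb\alpha_{j,b_j}\cdots\gb\alpha_{k,b_k}$.

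For the base case $n'=1$ (so $j=k$), I would restrict to the subalgebra $U_q(\tlie g_k)\cong U_q(\tlie{sl}_2)$. By Lemma~\ref{l:curalggen} together with the definition of $\gb\omega_{k,a_k,\lambda(h_k)}$, the $U_q(\tlie g_k)$-submodule of $V_q(\gb\lambda)$ generated by the highest-$\ell$-weight vector $v$ is isomorphic to the $U_q(\tlie{sl}_2)$-Kirillov--Reshetikhin module $V_q(\gb\omega_{k,a_k,\lambda(h_k)})$. The Chari--Pressley $\ell$-character of such a KR module exhibits its weight space one step below the top as $1$-dimensional with $\ell$-weight $\gb\lambda\,\gb\alpha_{k,a_kq^{\lambda(h_k)-1}}^{-1}$. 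Because $\lambda(h_k)\ge 1$ (necessary for $\gb\alpha_{k,b_k}$ to appear) and $\lie g$ is of type $A$, the full weight space $V_q(\gb\lambda)_{\lambda-\alpha_k}$ is also $1$-dimensional, so it coincides with the corresponding KR weight space and its unique $\ell$-weight must be $\gb\lambda\,\gb\alpha_{k,a_kq^{\lambda(h_k)-1}}^{-1}$, forcing $b_k=a_kq^{\lambda(h_k)-1}$.

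For the inductive step ($n'\ge 2$), I would take a nonzero $w\in V_q(\gb\lambda)_{\gb\mu}$ and aim to produce, by acting with the raising operators $x_{j,r}^+$ at the leftmost node of the chain, a nonzero vector $w'$ of $\ell$-weight $\gb\mu':=\gb\lambda\prod_{l=j+1}^{k}\gb\alpha_{l,b_l}^{-1}$, i.e.\ to cancel the leftmost factor $\gb\alpha_{j,b_j}^{-1}$. The vectors $x_{j,r}^+w$, $r\in\blb Z$, all lie in the weight space $V_q(\gb\lambda)_{\wt(\gb\mu)+\alpha_j}$, which decomposes under $U_q(\tlie h)$ into $\ell$-weight summands; a suitable $\mathbb C(q)$-linear combination of these, extracted via the generating-function identities relating $\{x_{j,r}^+\}_r$ and $\Psi_j^{\pm}(u)$, projects the family onto the $\gb\mu'$-summand. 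Once $w'\ne 0$ is established, the inductive hypothesis applied to the shorter chain $\gb\lambda\,{\gb\mu'}^{-1}=\gb\alpha_{j+1,b_{j+1}}\cdots\gb\alpha_{k,b_k}$ (which inherits the spacing condition of (a)) delivers the desired equality $b_k=a_kq^{\lambda(h_k)-1}$.

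The principal obstacle is the non-vanishing of $w'$: one must show that the $\gb\mu'$-component of $V_q(\gb\lambda)_{\wt(\gb\mu)+\alpha_j}$ is nonzero and is non-trivially hit by some combination of the $x_{j,r}^+w$. The spacing hypothesis $a_{i+1}/a_i=q^{\lambda(h_i)+\lambda(h_{i+1})+1}$ is essential at exactly this point: it is the minimal-affinization condition in type $A$, and it ensures that the $q$-segments at consecutive nodes are aligned so that $\gb\mu'$ actually appears in the $q$-character of the \emph{irreducible} module $V_q(\gb\lambda)$ (rather than only in that of the Weyl module $W_q(\gb\lambda)$). In practice, this non-vanishing can be established either by a direct computation using Lemmas~\ref{l:comcom}--\ref{l:comult} to control the Drinfeld coproduct on the relevant weight vectors, or by appealing to the known explicit $q$-character formulas for type-$A$ minimal affinizations (for example via snake-module or tableau descriptions), from which the claim can then be read off.
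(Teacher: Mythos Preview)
The paper does not supply a proof of this proposition: it is quoted from \cite[Section~4.8]{mou:reslim} and closed with a \qedsymbol, so there is no in-text argument against which to compare your sketch.

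Your outline is a reasonable strategy and is in the spirit of how such statements are proved in the literature. Two comments are in order.

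First, a small correction to the base case: Lemma~\ref{l:curalggen} is a statement about classical $\tlie g$-modules and does not apply here. The result you actually need is Lemma~\ref{l:dsa}, which identifies the $U_q(\tlie g_J)$-submodule generated by the highest-$\ell$-weight vector with $V_q(\gb\lambda_J)$; for $J=\{k\}$ this is precisely the Kirillov--Reshetikhin module $V_q(\gb\omega_{k,a_k,\lambda(h_k)})$, and the rest of your base-case argument then goes through.

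Second, the ``projection'' in the inductive step can be streamlined. By Theorem~\ref{t:cone} every $\ell$-weight of $V_q(\gb\lambda)$ lies below $\gb\lambda$, and since $\wt(\gb\mu)+\alpha_j=\lambda-\sum_{l=j+1}^{k}\alpha_l$ with each simple root occurring once, any $\ell$-weight at that level is forced to have the shape $\gb\lambda\,\gb\alpha_{j+1,c_{j+1}}^{-1}\cdots\gb\alpha_{k,c_k}^{-1}$. Coupled with the general fact that $x_{j,r}^+$ shifts $\ell$-weights by a factor $\gb\alpha_{j,c}$, this forces $c=b_j$, so any nonzero $x_{j,r}^+w$ already lies in the generalized $\gb\mu'$-eigenspace and the induction applies directly once \emph{some} $x_{j,r}^+w\ne 0$. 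Thus your ``principal obstacle'' really is the whole obstacle.

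On that point your proposal ultimately defers to ``known explicit $q$-character formulas'', which is essentially what the paper itself does by citing \cite{mou:reslim}. So as written, your sketch is not more self-contained than the paper's treatment. If you want to close the gap internally, the natural route is to argue that if $x_{j,r}^+w=0$ for all $r$ then $w$ is highest-$\ell$-weight for $U_q(\tlie g_j)$, forcing $\gb\mu_j(u)$ to be a polynomial, and then to check directly from the form of $\gb\lambda\gb\mu^{-1}$ and the spacing hypothesis on the $a_i$ that this fails; but that computation is where the real work lies and is not carried out in your proposal.
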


\subsection{Classical limits}

Denote by $\cal P_\mathbb A^+$ the subset of $\cal P_q$ consisting of $n$-tuples of polynomials with coefficients in $\mathbb A$. Let also $\cal P_\mathbb A^\times $ be the subset of $\cal P_\mathbb A^+$ consisting of $n$-tuples of polynomials whose leading terms are in $\mathbb Cq^{\mathbb Z}\backslash\{0\}=\mathbb A^\times$. Given $\gb\lambda\in\cal P_\mathbb A^+$, let \bgb\lambda\ be the element of $\cal P^+$ obtained from $\gb\lambda$ by evaluating $q$ at $1$.

Recall that an $\mathbb A$-lattice (or form) of a $\mathbb C(q)$-vector space $V$ is a free $\mathbb A$-submodule $L$ of $V$ such that $\mathbb C(q)\otimes_\mathbb A L=V$. If $V$ is a $U_q(\tlie g)$-module, a $U_\mathbb A(\tlie g)$-admissible lattice of $V$ is an $\mathbb A$-lattice of $V$ which is also a $U_\mathbb A(\tlie g)$-submodule of $V$. Given a $U_\mathbb A(\tlie g)$-admissible lattice of a $U_q(\tlie g)$-module $V$, define
\begin{equation}\label{e:clm}
\bar L = \mathbb C\otimes_\mathbb A L,
\end{equation}
where $\mathbb C$ is regarded as an $\mathbb A$-module by letting $q$ act as $1$. Then $\bar L$ is a $\tlie g$-module by Proposition \ref{p:cluq} and $\dim(\bar L)=\dim(V)$. The next theorem is essentially a corollary of the proof of Theorem \ref{t:weyl}.

\begin{thm}\label{t:llattice}
Let $V$ be a nontrivial quotient of $W_q(\gb\lambda)$ for some $\gb\lambda\in \cal P_\mathbb A^\times $, $v$  a highest-$\ell$-weight vector of $V$, and $L=U_\mathbb A(\tlie g)v$. Then, $L$ is a $U_\mathbb A(\tlie g)$-admissible lattice of $V$ and $\ch(\bar L)=\ch(V)$. In particular, $\bar L$ is a quotient of $W(\bgb\lambda)$.\hfill\qedsymbol
\end{thm}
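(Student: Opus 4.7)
That $L$ is a $U_\blb A(\tlie g)$-submodule of $V$ is immediate from its definition. Since $V$ is a highest-$\ell$-weight quotient of $W_q(\gb\lambda)$, the vector $v$ generates $V$, i.e.\ $V = U_q(\tlie g) v$. Combining this with the canonical isomorphism $\blb C(q) \otimes_\blb A U_\blb A(\tlie g) \cong U_q(\tlie g)$ stated just before Proposition \ref{p:cluq} yields $V = \blb C(q)\cdot L$, so the remaining content of the lattice assertion is that $L$ is free of rank $\dim_{\blb C(q)} V$ over $\blb A$.

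I would prove this weight space by weight space. Using the triangular decomposition and the fact that $v$ is a highest-$\ell$-weight vector, write $L = \bigoplus_{\mu \le \wt(\gb\lambda)} L_\mu$ where $L_\mu = L \cap V_\mu$. Each $L_\mu$ sits inside the finite-dimensional $\blb C(q)$-space $V_\mu$ (finite-dimensionality coming from Theorem \ref{t:weyl}), hence is torsion-free over the PID $\blb A$. The PBW-type decomposition $U_\blb A(\tlie g) = U_\blb A(\tlie n^-)\,U_\blb A(\tlie h)\, U_\blb A(\tlie n^+)$, combined with $x_{i,r}^+ v = 0$ and the fact that $\eta v \in \blb A\cdot v$ for every $\eta \in U_\blb A(\tlie h)$ (which uses $\gb\lambda \in \cal P_\blb A^+$), shows that $L_\mu$ is $\blb A$-spanned by PBW monomials in the divided powers $(x_{i,r}^-)^{(k)}$ applied to $v$. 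The step I expect to be the main obstacle is reducing this a priori infinite spanning set to a finite one: this is exactly the integral refinement of the argument in the proof of Theorem \ref{t:weyl}, carried out in \cite{cp:weyl} via the integral divided-power identities in $U_\blb A(\tlie n^-)$ --- hence the authors' remark that the present result is ``essentially a corollary'' of that proof. Once finite generation is in hand, $L_\mu$ is finitely generated and torsion-free over $\blb A$, so it is free, and its rank must equal $\dim_{\blb C(q)} V_\mu$ because $\blb C(q)\cdot L_\mu = V_\mu$.

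The character identity is then automatic: $\dim_\blb C \bar L_\mu = \operatorname{rank}_\blb A L_\mu = \dim_{\blb C(q)} V_\mu$. Note that $k_i$ acts on $L_\mu$ by the scalar $q^{\mu(h_i)} \in \blb A^\times$, which specializes at $q=1$ to $1$, so by Proposition \ref{p:cluq} the $\overline{U_q(\tlie g)}$-module $\bar L$ is genuinely a $\tlie g$-module, with weight spaces $\bar L_\mu$ of the correct dimensions. For the final assertion, the image $\bar v \in \bar L$ satisfies $x_{i,r}^+ \bar v = 0$, and the hypothesis $\gb\lambda \in \cal P_\blb A^\times$ together with \eqref{e:Lambdad} and \eqref{e:PsiLambda} guarantees that each $\Lambda_{i,r}$ acts on $v$ by an element of $\blb A$ whose specialization at $q=1$ is the corresponding coefficient of $\bgb\lambda_i(u)$ --- the leading-term condition in $\cal P_\blb A^\times$ is precisely what makes this specialization well-defined and nonzero. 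Hence $\bar v$ is a highest-$\ell$-weight vector for $\tlie g$ of $\ell$-weight $\bgb\lambda$, and since $\bar L = U(\tlie g)\bar v$, the universal property recorded in Theorem \ref{t:weyl} exhibits $\bar L$ as a quotient of $W(\bgb\lambda)$.
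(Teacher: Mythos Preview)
The paper does not actually supply a proof of this theorem; it is stated with a $\qedsymbol$ and the preceding sentence says only that it ``is essentially a corollary of the proof of Theorem~\ref{t:weyl}'' (i.e.\ of the arguments in \cite{cp:weyl}). Your outline is precisely the standard unpacking of that remark: triangular decomposition plus the integral divided-power identities from \cite{cp:weyl} to get finite $\blb A$-generation of each $L_\mu$, then freeness over the PID $\blb A$, then the specialization argument via Proposition~\ref{p:cluq}. So your approach is correct and coincides with what the paper is deferring to.

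One small sharpening: your phrase ``makes this specialization well-defined and nonzero'' slightly undersells the role of the hypothesis $\gb\lambda\in\cal P_\blb A^\times$. Specialization at $q=1$ of elements of $\blb A$ is always defined; the point of requiring the leading coefficients to lie in $\blb A^\times=\blb C^\times q^{\blb Z}$ is that $\deg\bgb\lambda_i=\deg\gb\lambda_i$, so that $\wt(\bgb\lambda)=\wt(\gb\lambda)=\lambda$. Without this, $\bar v$ would still be a highest-$\ell$-weight vector of $\ell$-weight $\bgb\lambda$ in the naive sense, but its $\lie h$-weight $\lambda$ would fail to match $\wt(\bgb\lambda)$, and the defining relations of $W(\bgb\lambda)$ (in particular $(x_i^-)^{\wt(\bgb\lambda)(h_i)+1}$) would not be the ones satisfied by $\bar v$. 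With that clarification, your argument for the ``in particular'' clause is complete.
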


\begin{defn}
Let $\gb\lambda\in \cal P_\mathbb A^\times $, $v$ be a highest-$\ell$-weight vector of $V_q(\gb\lambda)$ and $L=U_\mathbb A(\tlie g)v$. We denote by $\overline{V_q(\gb\lambda)}$ the $\tlie g$-module $\bar L$.
\end{defn}

\section{Minimal affinizations and Beyond}\label{s:min}

\subsection{Classification of minimal affinizations}\label{ss:min} We now review the notion of minimal affinizations of an irreducible $U_q(\lie g)$-module introduced in \cite{cha:minr2}.

Given $\lambda\in P^+$, a $U_q(\tlie g)$-module $V$ is said to be an affinization of $V_q(\lambda)$ if, as a $U_q(\lie g)$-module,
\begin{equation}
V\cong V_q(\lambda)\oplus \opl_{\mu< \lambda}^{} V_q(\mu)^{\oplus m_\mu(V)}
\end{equation}
for some $m_\mu(V)\in\mathbb Z_{\ge 0}$. Two affinizations of $V_q(\lambda)$ are said to be equivalent if they are isomorphic as $U_q(\lie g)$-modules. If $\gb\lambda\in\cal P_q^+$ is such that $\wt(\gb\lambda)=\lambda$, then $V_q(\gb\lambda)$ is quite clearly an affinization of $V_q(\lambda)$. The partial order on $P^+$ induces a natural partial order on the set of  (equivalence classes of) affinizations of $V_q(\lambda)$. Namely, if $V$ and $W$ are affinizations of $V_q(\lambda)$, say that $V\le W$ if one of the following conditions hold:
\begin{enumerate}
\item $m_\mu(V)\le m_\mu(W)$ for all $\mu\in P^+$;
\item for all $\mu\in P^+$ such that $m_\mu(V)>m_\mu(W)$ there exists $\nu>\mu$ such that $m_\nu(V)<m_\nu(W)$.
\end{enumerate}
A minimal element of this partial order is said to be a minimal affinization.

\begin{thm}[{\cite{cp:minsl}}]\label{t:drimin}
Let $\gb\lambda\in\cal P^+_q,\lambda=\wt(\gb\lambda)$, and  $V=V_q(\gb\lambda)$. Suppose $\lie g$ is of type $A$. Then $V$ is a minimal affinization of $V_q(\lambda)$ iff there exist $a\in\mathbb C(q)^\times$ and $\epsilon\in\{1,-1\}$ such that
$$\gb\lambda=\prod_{i=1}^n \gb\omega_{i,a_i,\lambda(h_i)} \qquad\text{with}\qquad a_1 = a \qquad\text{and}\qquad \frac{a_{i+1}}{a_i} = q^{\epsilon(\lambda(h_i)+\lambda(h_{i+1})-1)}$$
for all $i\in I, i<n$.
If $\lie g$ is of type $D$ or $E$, suppose the support of $\lambda$ is contained in a connected subdiagram $J\subseteq I$ of type $A$. Then, $V$ is a minimal affinization of $V_q(\lambda)$ iff $V_q(\gb\lambda_J)$ is a minimal affinization of $V_q(\lambda_J)$. \hfill\qedsymbol
\end{thm}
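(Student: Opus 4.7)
The plan is to reduce everything to the type $A$ case and then exploit the combinatorics of tensor products of Kirillov--Reshetikhin modules. First, observe that any minimal affinization $V$ of $V_q(\lambda)$ must in fact be irreducible as a $U_q(\tlie g)$-module. Otherwise, the irreducible quotient generated by a highest-$\ell$-weight vector of weight $\lambda$ is still an affinization of $V_q(\lambda)$: by \eqref{e:hw} the space $V_\lambda$ stays one-dimensional in the quotient, so $V_q(\lambda)$ still appears with multiplicity one, while all remaining $\lie g$-multiplicities can only drop, contradicting minimality. Hence every minimal affinization has the form $V_q(\gb\lambda)$ with $\wt(\gb\lambda)=\lambda$. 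A similar comparison argument, realizing $V_q(\gb\lambda)$ as the irreducible quotient of a tensor product of $V_q(\gb\mu)$'s corresponding to a nontrivial factorization of the Drinfeld polynomials, rules out factorizations other than $\gb\lambda=\prod_{i\in I}\gb\omega_{i,a_i,\lambda(h_i)}$: any such factorization yields strictly higher multiplicities at some $V_q(\mu)$ with $\mu<\lambda$.

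For $\lie g$ of type $A$, the remaining task is to pin down the ratios $a_{i+1}/a_i$. The key tool is the tensor product $T(\gb\lambda)=V_q(\gb\omega_{1,a_1,\lambda(h_1)})\otimes\cdots\otimes V_q(\gb\omega_{n,a_n,\lambda(h_n)})$, whose underlying $U_q(\lie g)$-character is fixed (since in type $A$ each $V_q(\gb\omega_{i,a,m})$ restricts to $V_q(m\omega_i)$) and can be computed by the Littlewood--Richardson rule applied to $\bigotimes_i V_q(\lambda(h_i)\omega_i)$. For generic $a_i$, $T(\gb\lambda)$ is irreducible and equals $V_q(\gb\lambda)$; at the reducibility values, the irreducible quotient $V_q(\gb\lambda)$ becomes strictly smaller. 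A pairwise analysis of reducibility of KR modules, via the $R$-matrix, shows that the number of $\lie g$-constituents removed is maximized (equivalently, $V_q(\gb\lambda)$ is pushed as far down in the affinization order as possible) precisely at the two boundary ratios $a_{i+1}/a_i=q^{\pm(\lambda(h_i)+\lambda(h_{i+1})-1)}$ taken simultaneously for all consecutive $i$. These two arithmetic-progression families are interchanged by the diagram automorphism $i\mapsto n+1-i$ and therefore form a single equivalence class of minima.

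For types $D$ and $E$ with $\supp(\lambda)$ contained in a type-$A$ subdiagram $J$, the strategy is to lift the $A$-type classification by restriction. By Theorem \ref{t:cone}, only $\gb\mu\le\gb\lambda$ appear in $V_q(\gb\lambda)$, and a weight-lattice comparison shows that any $V_q(\mu)$-constituent must also satisfy $\supp(\mu)\subseteq J$; Proposition \ref{p:restriction} then identifies its $U_q(\lie g_J)$-cover with $V_q(\mu_J)$, and a direct check shows that restriction to $U_q(\tlie g_J)$ preserves the affinization order. Consequently, $V_q(\gb\lambda)$ is a minimal affinization of $V_q(\lambda)$ if and only if $V_q(\gb\lambda_J)$ is a minimal affinization of $V_q(\lambda_J)$, reducing the classification to part (a). The main obstacle throughout is the type-$A$ comparison of tensor-product decompositions at the boundary values of $a_{i+1}/a_i$: this is where the combinatorics of semi-standard tableaux (or equivalently the relevant piece of the $q$-character machinery) is indispensable, and all the delicate interplay between Drinfeld polynomials and $\lie g$-characters is concentrated there.
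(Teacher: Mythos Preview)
The paper does not prove this theorem at all: it is quoted from \cite{cp:minsl} and closed with a bare \qedsymbol. So there is no ``paper's own proof'' to compare against beyond the citation.

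As for your sketch itself, the type $A$ outline is broadly in the spirit of \cite{cp:minsl}, but the $D/E$ reduction has a real gap. Your key step is the claim that ``a weight-lattice comparison shows that any $V_q(\mu)$-constituent must also satisfy $\supp(\mu)\subseteq J$.'' This is not a formal consequence of $\mu\le\lambda$ and $\supp(\lambda)\subseteq J$: for $i_0\notin J$ one has $\mu(h_{i_0})=-\sum_j c_j c_{i_0 j}$ with $\lambda-\mu=\sum c_j\alpha_j$, and nothing in the order relation forces this to vanish. In the concrete $E_6$ situation treated later in the paper (e.g.\ $\lambda=m\omega_2$), it \emph{does} turn out that the constituents stay supported on the chosen $J$, but that is a consequence of the actual structure of the minimal affinization, not of a generic lattice argument, and it is exactly part of what needs to be proved. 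Relatedly, your appeal to ``restriction to $U_q(\tlie g_J)$ preserves the affinization order'' conflates two different objects: Lemma~\ref{l:dsa} only identifies the $U_q(\tlie g_J)$-submodule generated by the highest-weight vector with $V_q(\gb\lambda_J)$, not the full restriction, so comparing $\lie g$-multiplicities of $V_q(\gb\lambda)$ with $\lie g_J$-multiplicities of $V_q(\gb\lambda_J)$ requires more than Proposition~\ref{p:restriction}. The genuine argument in \cite{cp:minsl} handles the two directions separately and uses more than restriction; in particular the implication ``$V_q(\gb\lambda_J)$ minimal $\Rightarrow$ $V_q(\gb\lambda)$ minimal'' is not addressed by your sketch.
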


The next corollaries are easily established (recall from \S\ref{ss:clalg} that $\overline{\rm supp}(\lambda)$ is the minimal connected subdiagram of $I$ containing ${\rm supp}(\lambda)$).

\begin{cor}
Suppose $\lambda\in P^+$ is such that $\overline{\rm supp}(\lambda)$ is of type $A$. Then, $V_q(\lambda)$ has a unique equivalence class of minimal affinizations.\hfill\qedsymbol
\end{cor}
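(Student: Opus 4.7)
The plan is to deduce the corollary directly from Theorem~\ref{t:drimin}. Let $J = \overline{\rm supp}(\lambda)$; by hypothesis this is a connected subdiagram of type $A$. If $\lie g$ is itself of type $A$, the first part of the theorem applies directly. Otherwise, since $\rm supp(\lambda) \subseteq J$ and $J$ is of type $A$, the second part of the theorem identifies the minimal affinizations of $V_q(\lambda)$ as precisely those $V_q(\gb\lambda)$ for which $V_q(\gb\lambda_J)$ is a minimal affinization of $V_q(\lambda_J)$ for $U_q(\lie g_J)$. For $j \notin J$ we have $\lambda(h_j) = 0$, which forces $\gb\lambda_j = 1$ (the constant polynomial). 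Hence the isomorphism class of any minimal affinization as a $U_q(\tlie g)$-module is determined entirely by $\gb\lambda_J$, which by the first part of the theorem is parameterized by a pair $(a,\epsilon) \in \mathbb C(q)^\times \times \{\pm 1\}$.

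Next I would show that all such $V_q(\gb\lambda)$ are isomorphic as $U_q(\lie g)$-modules. For fixed $\epsilon$ and varying $a$, the argument is the spectral parameter shift: there is a $\mathbb C(q)$-algebra automorphism $\tau_b$ of $U_q(\tlie g)$ (acting on the Drinfeld loop generators by $x_{i,r}^\pm \mapsto b^r x_{i,r}^\pm$, $h_{i,s}\mapsto b^s h_{i,s}$ and fixing $k_i^{\pm 1}$) which restricts to the identity on $U_q(\lie g)$ and sends each $\gb\omega_{i,c}$ to $\gb\omega_{i,bc}$. Pulling back the minimal affinization with parameter $a$ along $\tau_b$ therefore yields the minimal affinization with parameter $ba$, together with an explicit isomorphism of the underlying $U_q(\lie g)$-modules. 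So the equivalence class does not depend on $a$.

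For the two choices of $\epsilon$, the invariance of the $U_q(\lie g)$-character is the classical type-$A$ fact underlying Chari--Pressley's uniqueness result. One way I would carry it out is by realizing both minimal affinizations as the irreducible head of an ordered tensor product of fundamental affinizations $V_q(\gb\omega_{i,b_i})$; the two values of $\epsilon$ correspond to the two orientations of the chain $J$, hence to two orderings of the same tensor factors. Both tensor products have the same $U_q(\lie g_J)$-character, so their irreducible heads (both equal to $V_q(\lambda_J)$ as top component and both cyclic from it) have equal $U_q(\lie g_J)$-character as well, and since $\gb\lambda_j=1$ for $j\notin J$ this forces equal $U_q(\lie g)$-character. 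The main obstacle is precisely this $\epsilon$-invariance; the $a$-invariance is routine, but the comparison of the two tensor-product orderings requires the Chari--Pressley analysis of cyclicity and of the structure of the head. Once that input is in hand, the corollary follows immediately by combining the two invariances.
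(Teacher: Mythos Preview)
The paper gives no proof here (the corollary carries only a \qedsymbol, with the remark that it is ``easily established'' from Theorem~\ref{t:drimin}, itself quoted from \cite{cp:minsl}), so you are filling in omitted details rather than reproducing a given argument. Your reduction via Theorem~\ref{t:drimin} to the parameter pair $(a,\epsilon)$ is correct, as is the observation that $\gb\lambda_j=1$ for $j\notin J$, and your spectral-shift automorphism $\tau_b$ handles the $a$-independence cleanly.

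The $\epsilon$-invariance argument, however, has genuine gaps. First, the two values of $\epsilon$ do \emph{not} give ``two orderings of the same tensor factors'': the centers $a_i$ themselves change when $\epsilon$ flips, so the Kirillov--Reshetikhin factors $V_q(\gb\omega_{i,a_i,\lambda(h_i)})$ are different modules, not the same ones permuted. Second, even if two tensor products had the same $U_q(\lie g)$-character, this would not force their irreducible heads to have the same character. Third, and most seriously, the sentence ``since $\gb\lambda_j=1$ for $j\notin J$ this forces equal $U_q(\lie g)$-character'' is false: the $U_q(\lie g_J)$-character of $V_q(\gb\lambda_J)$ (which by Lemma~\ref{l:dsa} is the $U_q(\tlie g_J)$-submodule generated by the highest-weight vector) does not determine the $U_q(\lie g)$-character of $V_q(\gb\lambda)$. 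Indeed $V_q(\gb\lambda)$ is typically much larger than $V_q(\lambda)$ as a $U_q(\lie g)$-module in types $D$ and $E$---computing that enlargement is the entire subject of the paper.

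The clean fix is to treat $\epsilon$ the same way you treated $a$: use the $\mathbb C(q)$-algebra automorphism of $U_q(\tlie g)$ determined by $x_{i,r}^\pm\mapsto x_{i,-r}^\pm$, $h_{i,s}\mapsto h_{i,-s}$, $k_i^{\pm1}\mapsto k_i^{\pm1}$ (the quantum analogue of $t\mapsto t^{-1}$). It fixes $U_q(\lie g)$ pointwise and sends $\gb\omega_{i,c}\mapsto\gb\omega_{i,c^{-1}}$, hence carries the $\epsilon=+1$ family (based at $a$) to the $\epsilon=-1$ family (based at $a^{-1}$). Combined with your $\tau_b$, this gives the required $U_q(\lie g)$-isomorphism for the full algebra directly, with no tensor-product or head analysis needed.
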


\begin{cor}\label{c:KR}
Given $i\in I$ and $m\in\mathbb Z_{\ge 0}$, the modules $V_q(\gb\omega_{i,a,m}), a\in\mathbb C(q)^\times$, are the only minimal affinizations of $V_q(m\omega_i)$.\hfill\qedsymbol
\end{cor}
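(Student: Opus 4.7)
The plan is to derive the corollary directly from Theorem \ref{t:drimin}. Two preliminary observations simplify the bookkeeping. First, when $m=0$ the element $\gb\omega_{i,a,0}$ is the empty product (the identity of $\cal P_q$), and the trivial $U_q(\lie g)$-module $V_q(0)$ has only itself as affinization, so the statement is immediate; I may therefore assume $m\ge 1$. Second, any $\gb\lambda\in\cal P_q^+$ with $\wt(\gb\lambda)=m\omega_i$ must satisfy $\gb\lambda_j(u)=1$ for all $j\ne i$ and $\deg\gb\lambda_i=m$, since $\wt(\gb\omega_{j,a})=\omega_j$. Consequently, only the $i$-th component $\gb\lambda_i$ carries information, and the task reduces to pinning it down.

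If $\lie g$ is of type $D$ or $E$, I would take the connected subdiagram $J=\{i\}\subseteq I$, which is of type $A_1$ and contains ${\rm supp}(m\omega_i)=\{i\}$. The second assertion of Theorem \ref{t:drimin} then reduces the question to classifying the minimal affinizations of $V_q((m\omega_i)_J)$ over $U_q(\tlie g_J)\cong U_q(\tlie{sl}_2)$. Applying the first assertion with $n=1$ (where the ratio condition is vacuous), these are exactly the modules of the form $V_q(\gb\omega_{i,a,m})$ for $a\in\mathbb C(q)^\times$, which combined with the first preliminary observation yields the desired classification for $V_q(\gb\lambda)$.

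If $\lie g$ is of type $A_n$, I would apply the first assertion of Theorem \ref{t:drimin} directly. The product $\gb\lambda=\prod_{j=1}^n\gb\omega_{j,a_j,\lambda(h_j)}$ collapses to $\gb\omega_{i,a_i,m}$, since the remaining factors $\gb\omega_{j,a_j,0}$ are all equal to $1$; conversely, for any prescribed $b\in\mathbb C(q)^\times$ one can solve the chain of ratio equations for $a_1=a$ so that $a_i=b$, realizing every $V_q(\gb\omega_{i,b,m})$ as a minimal affinization. The only mild subtlety is to notice that the apparently rigid ratio conditions impose no actual constraint on $\gb\lambda$ itself, because the $a_j$ with $j\ne i$ are absorbed into trivial factors. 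No genuine obstacle arises; the whole argument is a straightforward unpacking of the classification theorem.
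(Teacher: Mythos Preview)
Your argument is correct and is precisely the direct unpacking of Theorem~\ref{t:drimin} that the paper has in mind when it says the corollary is ``easily established.'' The only thing worth adding is that the paper does not spell out any details for this corollary, so your write-up is in fact more explicit than the original.
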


The modules $V_q(\gb\omega_{i,a,m})$ are known as Kirillov-Reshetikhin modules.

We now state a few results which were used in the proof of Theorem \ref{t:drimin} and will be useful for us as well. The proofs can be found in \cite{cp:minsl}.

\begin{lem}\label{l:dsa}
Suppose $\emptyset\ne J\subseteq I$ is a connected subdiagram of the Dynkin diagram of $\lie g$. Let $V=V_q(\gb\lambda)$, $v$ a highest-$\ell$-weight vector of $V$, and $V_J = U_q(\tlie g_J)v$. Then, $V_J\cong V_q(\gb\lambda_J)$.\hfill\qedsymbol
\end{lem}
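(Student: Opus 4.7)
The plan is to show that $v$ is a highest-$\ell$-weight vector for $U_q(\tlie g_J)$ with $\ell$-weight $\gb\lambda_J$, which makes $V_J$ a highest-$\ell$-weight $U_q(\tlie g_J)$-module of that highest $\ell$-weight, and then to upgrade this to irreducibility by leveraging the irreducibility of $V_q(\gb\lambda)$ over $U_q(\tlie g)$. The strategy for the second half is to pick a potential highest-$\ell$-weight vector of a putative proper submodule and use the $\lie h$-weight cone of $V_q(\gb\lambda)$ to promote it to a $U_q(\tlie g)$-highest-$\ell$-weight vector, which by irreducibility forces a contradiction.

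The first assertion is routine: $x_{j,r}^+ v = 0$ for $j\in J$, $r\in\mathbb Z$, is inherited from $V_q(\gb\lambda)$, and $U_q(\tlie h_J)$ is generated by the $k_j^{\pm1}$ and $h_{j,s}$ with $j\in J$, on which the characters $\gb\Psi_{\gb\lambda}$ and $\gb\Psi_{\gb\lambda_J}$ agree by \eqref{e:PsiLambda} applied to the $J$-components of $\gb\lambda$. Hence Theorem \ref{t:weyl} yields a chain of surjections $W_q(\gb\lambda_J)\twoheadrightarrow V_J\twoheadrightarrow V_q(\gb\lambda_J)$.

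To upgrade the rightmost surjection to an isomorphism, suppose for contradiction that $V_J$ has a nonzero proper $U_q(\tlie g_J)$-submodule $W$. The identity $V_J = U_q(\tlie n^-_J) v$ shows that every weight of $V_J$ lies in $\lambda - Q_J^+$; since $Q_J$ injects into $\lie h^*_J$, choosing $w\in W$ of maximal $\lie h_J$-weight $\mu_J$ pins down its $\lie h$-weight as $\mu = \lambda-\beta$ with $\beta\in Q_J^+\setminus\{0\}$. Maximality gives $x_{j,r}^+ w = 0$ for $j\in J$, and for $i\notin J$ the vector $x_{i,r}^+ w$ has $\lie h$-weight $\lambda-(\beta-\alpha_i)$; the $\alpha_i$-coefficient of $\beta-\alpha_i$ is $-1$, so this weight is not $\le\lambda$, and by \eqref{e:hw} the corresponding weight space of $V_q(\gb\lambda)$ vanishes, forcing $x_{i,r}^+ w = 0$. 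Therefore $x_{i,r}^+ w = 0$ for every $i\in I$, $r\in\mathbb Z$.

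The conclusion then rests on identifying the subspace $W^+ = \{u\in V_q(\gb\lambda): x_{i,r}^+ u = 0 \text{ for all } i,r\}$ with $\mathbb C(q) v$. The commutator relations $[h_{i,s},x_{j,r}^+]\in \mathbb C(q) x_{j,r+s}^+$ make $W^+$ a $U_q(\tlie h)$-submodule; any true $U_q(\tlie h)$-eigenvector in a nonzero generalized eigenspace of $W^+$ is a highest-$\ell$-weight vector of the irreducible $V_q(\gb\lambda)$, so by Theorem \ref{t:weyl} its $\ell$-weight must equal $\gb\lambda$, whereupon \eqref{e:hw} yields $W^+ = V_q(\gb\lambda)_{\gb\lambda} = \mathbb C(q) v$. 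This places $w\in\mathbb C(q) v$, contradicting $v\notin W$, and hence $W = 0$. The main obstacle is the identification $W^+ = \mathbb C(q) v$ via the generalized-eigenspace decomposition; the intermediate step that promotes $w$ from $U_q(\tlie g_J)$-highest to $U_q(\tlie g)$-highest is mechanical once the correct choice of $w$ is made.
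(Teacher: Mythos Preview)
Your argument is correct and is essentially the standard proof of this lemma (the paper does not prove it in the text but refers to \cite{cp:minsl}, where the same weight-cone argument is used). The two key points --- that a vector of maximal $\lie h_J$-weight in a proper $U_q(\tlie g_J)$-submodule is automatically killed by $x_{i,r}^+$ for $i\notin J$ because $\lambda+\alpha_i-\beta\not\le\lambda$, and that any $U_q(\tlie n^+)$-invariant vector in the irreducible $V_q(\gb\lambda)$ must lie in the one-dimensional top $\ell$-weight space --- are exactly the ingredients in the cited reference. One cosmetic remark: the assertion $\beta\in Q_J^+\setminus\{0\}$ is not actually needed at the point you state it; the contradiction comes purely from $w\in\mathbb C(q)v$ together with $v\notin W$, and your final sentence handles this correctly. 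Note also that the connectedness hypothesis on $J$ is not used in your argument (nor is it essential for the statement); the paper includes it only because that is the generality needed later.
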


\begin{defn}
Suppose $\lie g$ is of type $A$. A connected subdiagram $J\subseteq I$ is said to be an admissible subdiagram. If $\lie g$ is of type $D$ or $E$, let $i_0\in I$ be the trivalent node. A connected subdiagram $J\subseteq I$ is said to be admissible if $J$ is of type $A$ and $J\backslash\{i_0\}$ is connected.
\end{defn}

\begin{prop}\label{p:admsd}
Suppose $J\subseteq I$ is admissible and that $\gb\lambda\in\cal P_q^+$ is such that $V_q(\gb\lambda)$ is a minimal affinization of $V_q(\lambda)$ where $\lambda=\wt(\gb\lambda)$. Then $V_q(\gb\lambda_J)$ is a minimal affinization of $V_q(\lambda_J)$.\hfill\qedsymbol
\end{prop}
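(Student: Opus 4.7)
My plan is to reduce the minimality of $V_q(\gb\lambda_J)$ to the Drinfeld-polynomial criterion of Theorem \ref{t:drimin} after applying Lemma \ref{l:dsa}. By that lemma, $V_q(\gb\lambda_J) \cong U_q(\tlie g_J)v \subseteq V_q(\gb\lambda)$, where $v$ is a highest-$\ell$-weight vector; and since $J$ is admissible, $\lie g_J$ is of type $A$. By the type-$A$ half of Theorem \ref{t:drimin}, it then suffices to exhibit scalars $\{a_j\}_{j\in J}\subset \mathbb C(q)^\times$ such that $\gb\lambda_J = \prod_{j\in J}\gb\omega_{j,a_j,\lambda(h_j)}$ and $a_{j+1}/a_j = q^{\epsilon(\lambda(h_j)+\lambda(h_{j+1})-1)}$ for all consecutive $j,j+1$ in the chain $J$, with a uniform sign $\epsilon \in \{\pm 1\}$.

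When $\lie g$ itself is of type $A$, the hypothesis together with Theorem \ref{t:drimin} directly supplies such scalars $\{a_i\}_{i\in I}$ satisfying the global relation on $I$, and restricting to $J$ preserves both the relation and the sign $\epsilon$.

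When $\lie g$ is of type $D$ or $E$, Theorem \ref{t:drimin} provides a connected type-$A$ subdiagram $J_0 \subseteq I$ containing $\supp(\lambda)$ and scalars $\{a_i\}_{i\in J_0}$ satisfying the type-$A$ recursion on $J_0$ for some sign $\epsilon$. Because the Dynkin diagram is a tree, the intersection $J\cap J_0$ of two connected subdiagrams is again connected (and possibly empty). Define $a'_j := a_j$ for $j\in J\cap J_0$, and extend $\{a'_j\}$ to the remaining indices $j \in J\setminus J_0$ by propagating the recursion with the same sign $\epsilon$ outward from the boundary of $J\cap J_0$; the at-most-two connected components of $J\setminus J_0$ inside $J$ each receive a well-defined initial value from their boundary with $J\cap J_0$, or, if $J\cap J_0 = \emptyset$, the choice is entirely free because $\lambda_J = 0$. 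Since $\supp(\lambda_J) = \supp(\lambda)\cap J \subseteq J\cap J_0$, the scalars chosen on $J\setminus J_0$ do not alter $\gb\lambda_J$, which therefore equals $\prod_{j\in J}\gb\omega_{j,a'_j,\lambda_J(h_j)}$ and satisfies the $J$-type recursion by construction. Theorem \ref{t:drimin} then yields the minimality of $V_q(\gb\lambda_J)$.

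The main obstacle is the consistency of the data on $J\cap J_0$: the chain structure of $J$ must recover the same ratios $a_{j_2}/a_{j_1}$ as the chain structure of $J_0$ for any pair $j_1,j_2 \in J\cap J_0$. This is the point at which the tree structure of the Dynkin diagram is indispensable --- the unique path between $j_1$ and $j_2$ inside the tree lies in both $J$ and $J_0$ (since each is a connected subdiagram containing $\{j_1,j_2\}$), so the product of recursive factors along this path is the same when evaluated in either chain. Once this consistency is verified, the extension described above is unambiguous and produces the Drinfeld data certifying that $V_q(\gb\lambda_J)$ is a minimal affinization of $V_q(\lambda_J)$.
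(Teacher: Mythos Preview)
The paper does not actually prove Proposition~\ref{p:admsd}; it cites \cite{cp:minsl} and explicitly remarks that this proposition was \emph{used in the proof} of Theorem~\ref{t:drimin}. Your argument runs the logic in the opposite direction, deducing Proposition~\ref{p:admsd} from Theorem~\ref{t:drimin}. Within the original development in \cite{cp:minsl} this would be circular; in the context of the present paper, where both statements are imported as black boxes, the circularity objection is softer, but there is a more serious gap.

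Your reduction to Theorem~\ref{t:drimin} only works when $\supp(\lambda)$ is contained in a connected subdiagram of type~$A$: that is the hypothesis under which Theorem~\ref{t:drimin} says anything at all about the structure of $\gb\lambda$ in types $D$ and $E$. Proposition~\ref{p:admsd}, however, is stated for an \emph{arbitrary} $\gb\lambda\in\cal P_q^+$ with $V_q(\gb\lambda)$ a minimal affinization---in particular it must apply when $\overline{\supp}(\lambda)$ contains the trivalent node and is therefore not of type~$A$. In that case there is no $J_0$ as in your second paragraph, Theorem~\ref{t:drimin} gives no Drinfeld-polynomial description of $\gb\lambda$, and your argument does not get started. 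The original proof in \cite{cp:minsl} proceeds by a direct comparison of the partial orders on affinizations for $\lie g$ and for $\lie g_J$ (using Lemma~\ref{l:dsa} and the admissibility of $J$ to control how $U_q(\lie g)$-constituents restrict), rather than via an explicit parametrization of the minimal $\gb\lambda$.

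In the restricted situation where $\overline{\supp}(\lambda)$ \emph{is} of type~$A$ (which is all that the present paper ultimately needs, via Proposition~\ref{p:admevrel} inside the proof of Proposition~\ref{p:LqM}), your argument is essentially sound: the tree structure does force $J\cap J_0$ to be a common subchain with identical adjacency, so the ratios $a_{j_2}/a_{j_1}$ computed along $J$ and along $J_0$ agree, and the extension to $J\setminus J_0$ is harmless because $\lambda$ vanishes there. If you intend only this special case, you should say so and adjust the statement accordingly.
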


The next proposition was proved in \cite[Proposition 3.7]{mou:reslim}.

\begin{prop}\label{p:fracqz}
Let $\gb\lambda\in P_q^+$ and $\lambda=\wt(\gb\lambda)$. If $V_q(\gb\lambda)$ is a minimal affinization of $V_q(\lambda)$, then there exist $a_i\in\mathbb C(q)^\times,i\in I$, such that $\gb\lambda = \prod_{i\in I}\gb\omega_{i,a_i,\lambda(h_i)}$ and $\frac{a_i}{a_j}\in q^{\mathbb Z}$ for all $i,j\in I$.\hfill\qedsymbol
\end{prop}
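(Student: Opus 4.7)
The plan is to reduce the proposition to two applications of results already recorded in the paper: the restriction principle of Proposition~\ref{p:admsd} and the type-$A$ classification of Theorem~\ref{t:drimin}. Since the Dynkin diagram of $\lie g$ is connected, every node $i$ lies in at least one edge $\{i,j\}$, and the two-node subdiagram $\{i,j\}$ is admissible: it is of type $A_2$, and its complement in $\{i_0\}$ has at most one element and is therefore connected. Proposition~\ref{p:admsd} then yields that $V_q(\gb\lambda_{\{i,j\}})$ is a minimal affinization of $V_q(\lambda(h_i)\omega_i+\lambda(h_j)\omega_j)$ as a $U_q(\tlie{sl}_3)$-module, and the type-$A_2$ case of Theorem~\ref{t:drimin} forces $\gb\lambda_k=\gb\omega_{k,a_k,\lambda(h_k)}$ for $k\in\{i,j\}$ together with $a_j/a_i=q^{\pm(\lambda(h_i)+\lambda(h_j)-1)}$. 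Letting $\{i,j\}$ run over all edges of the Dynkin diagram simultaneously establishes the Kirillov--Reshetikhin factorization claimed in the first half of the proposition and the condition $a_i/a_j\in q^{\mathbb Z}$ for every adjacent pair.

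To propagate the ratio condition to arbitrary pairs $i,j\in I$, I would exploit that the Dynkin diagram is a tree and multiply the edgewise ratios along the unique path from $i$ to $j$. The subtle point is the consistency of the intermediate $a_k$'s used in the chain: for $k$ with $\lambda(h_k)>0$, $a_k$ is uniquely pinned by $\gb\lambda_k$, so no issue arises. For $k$ with $\lambda(h_k)=0$, the two neighbors $k^\pm$ of $k$ on the path each impose a constraint on $a_k$, and compatibility is automatic whenever $\{k^-,k,k^+\}$ is admissible---by applying Theorem~\ref{t:drimin} to this $A_3$ subdiagram via Proposition~\ref{p:admsd}. This covers every case \emph{except} the one in which $k=i_0$ lies between two nodes on distinct branches of the trivalent node. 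Moreover, whenever $\overline{\rm supp}(\lambda)$ is of type $A$, the second clause of Theorem~\ref{t:drimin} reduces the whole problem to that subdiagram in one stroke and delivers the conclusion.

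The main obstacle is therefore the case, specific to types $D$ and $E$, in which $\overline{\rm supp}(\lambda)$ is not of type $A$ and the path between $i$ and $j$ traverses $i_0$ with $\lambda(h_{i_0})=0$. In that case $\{k^-,i_0,k^+\}$ is not admissible (removing $i_0$ disconnects it), so Proposition~\ref{p:admsd} does not directly reconcile the two candidate values of $a_{i_0}$ coming from the adjacent edges, and the circular chain of constraints cannot be closed using only Theorem~\ref{t:drimin} and Proposition~\ref{p:admsd}. Overcoming this requires invoking the finer classification of minimal affinizations from \cite{cp:minsl}: namely, the spectral parameters of a minimal affinization in type $D$ or $E$ all lie on a single $q^{\mathbb Z}$-orbit in $\mathbb C(q)^\times$. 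With that input in hand, the two candidates for $a_{i_0}$ must agree modulo $q^{\mathbb Z}$, whence $a_i/a_j\in q^{\mathbb Z}$ follows. I expect verifying this orbit condition---either by a direct argument along one branch at a time or by unpacking the relevant portion of \cite{cp:minsl}---to be the technical core of the proof.
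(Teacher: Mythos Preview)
The paper does not prove this proposition; it is quoted from \cite[Proposition 3.7]{mou:reslim} with no argument given here. So there is no in-paper proof to compare yours against, only the citation.

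Your outline is sound as far as it goes. The factorization $\gb\lambda_i = \gb\omega_{i,a_i,\lambda(h_i)}$ already follows from the singleton case $J=\{i\}$ (always admissible) together with Corollary~\ref{c:KR}, so edges are not needed for that part. For adjacent $i,j$ both in $\supp(\lambda)$, your appeal to the $A_2$ classification through Proposition~\ref{p:admsd} does give $a_i/a_j\in q^{\mathbb Z}$, and the chaining along admissible $A_3$ pieces is correct when the middle node is not $i_0$. One minor wording slip: for an edge $\{i,j\}$ not containing $i_0$, the set $\{i,j\}\setminus\{i_0\}$ has two elements, not ``at most one''; it is still connected because $i$ and $j$ are adjacent, which is the point.

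You have correctly located the only real obstruction: when $\overline\supp(\lambda)$ is not of type $A$, the path between two supported nodes on distinct branches crosses $i_0$, the triple $\{k^-,i_0,k^+\}$ is not admissible, and Proposition~\ref{p:admsd} with Theorem~\ref{t:drimin} alone cannot close the chain. Your proposed remedy---invoking the full classification in \cite{cp:minsl}---is exactly what underlies the cited result in \cite{mou:reslim}; so you are not producing an alternative proof so much as partially unpacking the reference, with an honest flag on the step that still needs that input. Note also that once the factorization is established, the ratio condition has content only for $i,j\in\supp(\lambda)$: for $i\notin\supp(\lambda)$ the factor $\gb\omega_{i,a_i,0}$ is trivial and $a_i$ may be chosen freely, so extending to ``all $i,j\in I$'' is automatic after the supported pairs are handled.
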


\begin{cor}
For every $\lambda\in P^+$ there exist $\gb\lambda\in\cal P_\mathbb A^\times $ such that $V_q(\gb\lambda)$ is a minimal affinization of $V_q(\lambda)$. Moreover, $\bgb\lambda = \gb\omega_{\lambda,a}$ for some $a\in\mathbb C^\times$.\hfill\qedsymbol
\end{cor}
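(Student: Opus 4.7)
The plan is to combine Proposition~\ref{p:fracqz} with a spectral-parameter shift. First, for any $\lambda\in P^+$ the set of equivalence classes of affinizations of $V_q(\lambda)$ is nonempty (any $\gb\mu\in\cal P_q^+$ with $\wt(\gb\mu)=\lambda$ supplies one, namely $V_q(\gb\mu)$), and standard arguments show that the partial order defined in \S\ref{ss:min} has minimal elements; fix a minimal affinization $V_q(\gb\mu)$ once and for all. Proposition~\ref{p:fracqz} then gives $\gb\mu=\prod_{i\in I}\gb\omega_{i,b_i,\lambda(h_i)}$ with $b_i/b_j\in q^{\mathbb Z}$ for all $i,j\in I$, so that we may write $b_i=b\,q^{n_i}$ for a single $b\in\mathbb C(q)^\times$ and integers $n_i$.

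The next step is to normalize the common factor $b$ by a spectral shift. For any $c\in\mathbb C(q)^\times$, the formulas $x_{i,r}^\pm\mapsto c^r x_{i,r}^\pm$, $h_{i,r}\mapsto c^r h_{i,r}$, $k_i\mapsto k_i$ extend to an algebra automorphism $\phi_c$ of $U_q(\tlie g)$ which restricts to the identity on $U_q(\lie g)$. Pulling $V_q(\gb\mu)$ back along $\phi_c$ therefore produces a $U_q(\tlie g)$-module that is isomorphic to $V_q(\gb\mu)$ as a $U_q(\lie g)$-module and whose highest $\ell$-weight is obtained from $\gb\mu$ by the substitution $b_i\mapsto c\,b_i$; in particular it is again a minimal affinization of $V_q(\lambda)$. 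Fix any $a\in\mathbb C^\times$, take $c=a/b$, and set $\gb\lambda=\prod_{i\in I}\gb\omega_{i,\,a q^{n_i},\,\lambda(h_i)}$.

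To finish, I would read off the two assertions directly from \eqref{e:krhlw}. The $i$-th component of $\gb\lambda$ is a product of $\lambda(h_i)$ linear factors of the form $1-a\,q^{N}u$ with $N\in\mathbb Z$, so it lies in $\mathbb A[u]$ and its leading coefficient is $(-a)^{\lambda(h_i)}$ times a monomial in $q$; hence $\gb\lambda\in\cal P_{\mathbb A}^\times$. Specializing $q=1$ turns every such factor into $(1-au)$, so the $i$-th component of $\bgb\lambda$ equals $(1-au)^{\lambda(h_i)}$, which by definition is the $i$-th component of $\gb\omega_{\lambda,a}$.

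The only non-routine input is the existence of the spectral-shift automorphism $\phi_c$ together with its triviality on $U_q(\lie g)$; this is a standard feature of the Drinfeld new presentation of $U_q(\tlie g)$, and once it is invoked the rest of the proof is elementary bookkeeping with the spectral parameters.
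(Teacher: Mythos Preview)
Your proof is correct and is essentially the argument the paper has in mind; the corollary is stated with a bare \qedsymbol\ immediately after Proposition~\ref{p:fracqz}, so the intended proof is precisely to take any minimal affinization, invoke Proposition~\ref{p:fracqz} to put the centers $b_i$ into a single $q^{\mathbb Z}$-orbit, and then use the spectral-shift automorphism to land that orbit in $\mathbb C^\times q^{\mathbb Z}$. Your verification that the resulting Drinfeld polynomials lie in $\cal P_{\mathbb A}^\times$ and specialize to $\gb\omega_{\lambda,a}$ at $q=1$ is exactly right.
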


\subsection{Graded characters}\label{ss:limits}

Recall the definition of the maps $\tau_a:\lie g[t]\to\lie g[t]$ from subsection \ref{ss:clalg}.

\begin{defn}
Let $\gb\lambda\in\cal P_\mathbb A^\times , \lambda=\wt(\gb\lambda)$, and $a\in\mathbb C^\times$ be such that $\bgb\lambda = \gb\omega_{\lambda,a}$. The $\lie g[t]$-module $L(\gb\lambda)$ is defined to be the pullback of $\overline{V_q(\gb\lambda)}$ by $\tau_a$.
\end{defn}

It is immediate from Theorem \ref{t:llattice} that
\begin{equation}\label{e:Misoclass}
\ch(L(\gb\lambda))=\ch(V_q(\gb\lambda)).
\end{equation}

Let $V$ be a $\mathbb Z_{\ge 0}$-graded vector space and denote its $r$-th graded piece by $V[r]$. A $\lie g[t]$-module $V$ is said to be $\mathbb Z_{\ge 0}$-graded if $V$ is a $\mathbb Z_{\ge 0}$-graded vector space and $x\otimes t^sv\in V[r+s]$ for every $v\in V[r], x\in\lie g, r,s\in\mathbb Z_{\ge 0}$. Observe that if $V$ is a $\mathbb Z_{\ge 0}$-graded $\lie g[t]$-module, then each graded peace is a $\lie g$-module. Given $s\in\mathbb Z_{\ge 0}$, denote by $V(s)$ the quotient of $V$ by its $\lie g[t]$-submodule $\opl_{r> s}^{} V[r]$. We shall refer to $V(s)$ as the truncation of $V$ at degree $s$. If $V$ is a finite-dimensional $\mathbb Z_{\ge 0}$-graded $\lie g[t]$-module, define the graded character of $V$ by
$$\ch_t(V) =\sum_{r\ge 0} \ch(V[r])\ t^r \ \in\ \mathbb Z[P][t].$$
Let also $m_{\mu,r}(V)$ be the multiplicity of $V(\mu)$ as an irreducible constituent of $V[r]$.

\begin{defn}\label{d:KRres}
Let $m\in \mathbb Z_{\ge 0}$ and $i\in I$. The $\lie g[t]$-module $M(m\omega_i)$ is the quotient of $U(\lie g[t])$ by the left ideal generated by
\begin{equation}\label{e:NKRdef}
\lie n^+[t], \qquad \lie h\otimes t\mathbb C[t], \qquad h_j,\qquad h_i-m,\qquad x_{\alpha_j}^-, \qquad (x_{\alpha_i}^-)^{m+1},\qquad x_{\alpha_i,1}^-\qquad \text{for all } j\ne i.
\end{equation}
Define $T(m\omega_i)$ to be the $\lie g[t]$-submodule of $M(\omega_i)^{\otimes m}$ generated by the top weight space.
\end{defn}

Quite clearly $M(m\omega_i)$ is a $\mathbb Z_{\ge 0}$-graded $\lie g[t]$-module. Given $\lambda\in P^+$ one can consider the modules $A(\lambda)$ defined in \cite{mou:reslim}. These are graded $\lie g[t]$-modules which were proved to be finite-dimensional in \cite[Proposition 3.15]{mou:reslim}. One can proceed similarly to prove that the modules $M(m\omega_i)$ are finite-dimensional. Moreover, it was proved in  \cite[Proposition 5.2.5]{per} that $A(m\omega_i)\cong M(m\omega_i)$ (for a general simple Lie algebra $\lie g$). We shall not need the modules $A(\lambda)$ in this paper.

Given $i\in I, m,r\in \mathbb Z_{\ge 0}$, let $v_{i,m}$ be the image of $1$ in $M(m\omega_i)$ and set
\begin{equation}
R(i,m,r) = \{\alpha\in R^+: x_{\alpha,r}^-v_{i,m}=0\}.
\end{equation}
Since $(\lie h\otimes t\mathbb C[t])v_{i,m}=0$, it follows that
\begin{equation}\label{e:Rgrows}
R(i,m,r)\subseteq R(i,m,s) \qquad\text{for all }s\ge r.
\end{equation}
In particular, it follows that $M(0)$ is the trivial representation and $R^+(i,0,s)=R^+$ for all $i\in I, s\in\mathbb Z_{\ge 0}$. Now, given $\lambda\in P^+$ and $r\in\mathbb Z_{\ge 0}$, set
\begin{equation}
R(\lambda, r) = \bigcap_{i\in I} R(i,\lambda(h_i),r).
\end{equation}
Notice $R(m\omega_i,r)=R(i,m,r)$ for all $i\in I$ and $m,r\in\mathbb Z_{\ge 0}$.

\begin{defn}\label{d:N}
Let $\lambda\in P^+$. The $\lie g[t]$-module $M(\lambda)$ is the quotient of $U(\lie g[t])$ by the left ideal generated by
\begin{equation}\label{e:Ndef}
\lie n^+[t], \qquad \lie h\otimes t\mathbb C[t], \qquad h_i-\lambda(h_i), \qquad (x_{\alpha_i}^-)^{\lambda(h_i)+1}, \qquad x_{\alpha,r}^-
\end{equation}
for all $i\in I, r\in\mathbb Z_{\ge 0}$, and $\alpha\in R(\lambda,r)$.  Define $T(\lambda)$ to be the $\lie g[t]$-submodule of $\otm_{i\in I}^{} M(\lambda(h_i)\omega_i)$ generated by the top weight space.
\end{defn}

Definitions \ref{d:KRres} and \ref{d:N} of $M(m\omega_i)$ coincide since $R(m\omega_i,r)=R(i,m,r)$ for all $i\in I, m,r\in\mathbb Z_{\ge 0}$. The modules $M(\lambda)$ are clearly $\mathbb Z_{\ge 0}$-graded. It follows from \cite[Proposition 3.13]{mou:reslim} that $M(\lambda)$ is a quotient of the module $A(\lambda)$ of \cite{mou:reslim} and, hence, finite-dimensional. Moreover, one easily sees that $T(\lambda)$ is a graded quotient of $M(\lambda)$ for all $\lambda\in P^+$ (the details can be found in \cite[Proposition 5.2.10]{per}).

\begin{prop}[{\cite[Proposition 3.21]{mou:reslim}}]\label{p:TqL}
Let $\gb\lambda\in\cal P_\mathbb A^\times $ be such that $V_q(\gb\lambda)$ is a minimal affinization of $V_q(\lambda)$ where $\lambda=\wt(\gb\lambda)$. Then, $T(\lambda)$ is a quotient of $L(\gb\lambda)$.\hfill\qedsymbol
\end{prop}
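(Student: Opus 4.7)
The plan is to realize both $L(\gb\lambda)$ and $T(\lambda)$ as quotients of a common $\lie g[t]$-module built from a tensor product of Kirillov-Reshetikhin modules and their classical limits, and then to show that the surjection onto $T(\lambda)$ factors through $L(\gb\lambda)$. First I would apply Proposition \ref{p:fracqz} to write $\gb\lambda = \prod_{i\in I}\gb\omega_{i,a_i,\lambda(h_i)}$, where all the $a_i$ share a common specialization $a\in\mathbb C^\times$ at $q=1$. Ordering the factors as dictated by Drinfeld's analysis of minimal affinizations (via Lemma \ref{l:dsa} and Theorem \ref{t:drimin}), the tensor $v^\otimes$ of top vectors in $V^\otimes := \bigotimes_{i\in I} V_q(\gb\omega_{i,a_i,\lambda(h_i)})$ is a highest-$\ell$-weight vector of $\ell$-weight $\gb\lambda$; the cyclic submodule $W_q := U_q(\tlie g)v^\otimes$ is therefore a highest-$\ell$-weight module and, by Theorem \ref{t:weyl}, admits a surjection onto $V_q(\gb\lambda)$.

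Passing to the $U_\mathbb A$-lattice $L^\otimes := U_\mathbb A(\tlie g)v^\otimes$ (which lies inside the tensor product of the natural lattices $L_i := U_\mathbb A(\tlie g)v_i$) and then to classical limits followed by the $\tau_a$-pullback produces a diagram of $\lie g[t]$-modules
\[
X := \tau_a^*\,\overline{L^\otimes} \;\twoheadrightarrow\; L(\gb\lambda), \qquad X \;\hookrightarrow\; \bigotimes_{i\in I} L(\gb\omega_{i,a_i,\lambda(h_i)}),
\]
where $X$ is the $\lie g[t]$-submodule generated by the image of $v^\otimes$. Next I would verify that each KR classical limit surjects onto the corresponding $M(m\omega_i)$ via a top-to-top map. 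The defining relations of $M(m\omega_i)$ other than $x^-_{\alpha_i,1}v=0$ follow immediately from the weight of the top, the highest-$\ell$-weight property, and the calculation (using $\tau_a$) that $\lie h\otimes t\mathbb C[t]$ acts trivially after the shift. The relation $x^-_{\alpha_i,1}v=0$ is the classical-limit-and-twist of the standard identity $x^-_{i,1}v = a_iq^{m-1}\,x^-_{i,0}v$ valid in $V_q(\gb\omega_{i,a_i,m})$ (see \cite{cha:fer}). Tensoring these surjections $L(\gb\omega_{i,a_i,\lambda(h_i)}) \twoheadrightarrow M(\lambda(h_i)\omega_i)$ and restricting to $X$ yields a surjection $X\twoheadrightarrow T(\lambda)$, whose image is the submodule of $\bigotimes_{i\in I} M(\lambda(h_i)\omega_i)$ generated by the tensor of tops.

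The critical remaining step, and the principal obstacle I expect, is to show that this second surjection factors through $L(\gb\lambda)$, that is, $\ker(X\twoheadrightarrow L(\gb\lambda)) \subseteq \ker(X\twoheadrightarrow T(\lambda))$. The kernel on the left is the classical-limit image of the intersection of $L^\otimes$ with the maximal proper $U_q(\tlie g)$-submodule of $W_q$; one must show that each of its generators maps to zero under the tensor surjection $\bigotimes_{i\in I} L(\gb\omega_{i,a_i,\lambda(h_i)}) \twoheadrightarrow \bigotimes_{i\in I} M(\lambda(h_i)\omega_i)$. My approach is to use Lemmas \ref{l:comcom}--\ref{l:comult} to control the interaction of the coproduct with $\mathbb A$-specialization, so that the verification reduces to checking vanishing on a few explicit $\ell$-weight vectors at the top of each factor; the KR relations already established then force the required cancellations. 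Once the factoring is granted, the induced map $L(\gb\lambda) \twoheadrightarrow T(\lambda)$ is automatically surjective since $L(\gb\lambda)$ is generated as a $\lie g[t]$-module by its top vector (Lemma \ref{l:curalggen}).
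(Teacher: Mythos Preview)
The paper does not prove this proposition here; it is quoted directly from \cite{mou:reslim}. Your overall strategy---embedding $V_q(\gb\lambda)$ in a tensor product of Kirillov--Reshetikhin modules, passing to classical limits, pulling back by $\tau_a$, and surjecting each factor onto the corresponding $M(m_i\omega_i)$---is correct and is essentially the argument of \cite{mou:reslim}.

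However, you have manufactured a difficulty that is not there. The step you flag as the ``critical remaining step'' (factoring $X\twoheadrightarrow T(\lambda)$ through $X\twoheadrightarrow L(\gb\lambda)$) is vacuous once you invoke Corollary~\ref{c:tpkrmin} (built on Proposition~\ref{p:cyclic}) rather than only Lemma~\ref{l:dsa} and Theorem~\ref{t:drimin}. With the ordering supplied by that corollary, the cyclic submodule $W_q=U_q(\tlie g)v^\otimes$ is not merely a highest-$\ell$-weight module surjecting onto $V_q(\gb\lambda)$: it \emph{is} $V_q(\gb\lambda)$. Hence $L^\otimes$ is already the admissible lattice of $V_q(\gb\lambda)$, so $X=\tau_a^*\,\overline{L^\otimes}=L(\gb\lambda)$ and the arrow $X\to L(\gb\lambda)$ is the identity. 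There is no kernel to chase, and your proposed detour through Lemmas~\ref{l:comcom}--\ref{l:comult} can be dropped.

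The remaining ingredients you list are the right ones. Functoriality of $\mathbb C\otimes_\mathbb A(-)$ gives a $\tlie g$-module map $\overline{L^\otimes}\to\bigotimes_i\overline{L_i}$ (injectivity is irrelevant), and $\tau_a^*$ commutes with tensor products because the coproduct on $U(\lie g[t])$ is primitive. Each $L(\gb\omega_{i,a_i,m_i})$ surjects onto $M(m_i\omega_i)$ since its generator satisfies the relations \eqref{e:NKRdef}: the only nonobvious one, $x^-_{\alpha_i,1}v=0$, follows from Lemma~\ref{l:x1} (or Proposition~\ref{p:admevrel} with $J=\{i\}$) together with the shift by $\tau_a$. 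Composing yields a map $L(\gb\lambda)\to\bigotimes_i M(m_i\omega_i)$ sending top to top, whose image is $T(\lambda)$ by definition.
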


The following is the main conjecture of \cite{mou:reslim}.

\begin{con}\label{c:mou}
Let $\lambda\in P^+$. Then, $M(\lambda)\cong T(\lambda)$. Moreover, if $\overline{\rm supp}(\lambda)$ is of type $A$ and $\gb\lambda\in\cal P_\mathbb A^\times $ is such that $V_q(\gb\lambda)$ is a minimal affinization of $V_q(\lambda)$, then, $M(\lambda)\cong L(\gb\lambda)$.
\end{con}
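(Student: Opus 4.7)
The plan is to establish the conjecture by producing a chain of surjective $\lie g[t]$-module maps
\[
M(\lambda)\twoheadrightarrow L(\gb\lambda)\twoheadrightarrow T(\lambda)
\]
(the second already provided by Proposition~\ref{p:TqL}, the first to be constructed) together with a second surjection $M(\lambda)\twoheadrightarrow T(\lambda)$ coming directly from Definition~\ref{d:N}, and then to upgrade all of these to isomorphisms by a character comparison. Since both parts of the conjecture concern cyclic modules on a highest-weight generator, the work splits into (a) verifying defining relations in order to obtain the surjections out of $M(\lambda)$, and (b) matching graded characters to force these surjections to have no kernel.

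First I would verify that the generator of $T(\lambda)$, namely the top weight space of $\otm_{i\in I}^{} M(\lambda(h_i)\omega_i)$, kills each relation in \eqref{e:Ndef}; this is essentially bookkeeping on the comultiplication (cf.~\cite[Proposition~5.2.10]{per}) and yields $M(\lambda)\twoheadrightarrow T(\lambda)$. Next, to produce the surjection $M(\lambda)\twoheadrightarrow L(\gb\lambda)$ in the type-$A$ support case, I would fix a highest $\ell$-weight vector $v$ of $V_q(\gb\lambda)$, pass to the classical limit $\bar v\in L(\gb\lambda)$, and check the relations of \eqref{e:Ndef}. The $\lie n^+[t]$, $\lie h\otimes t\mathbb C[t]$, weight, and power relations are immediate from Theorem~\ref{t:llattice}; the nontrivial ones are the annihilations $x^-_{\alpha,r}\bar v=0$ for $\alpha\in R(\lambda,r)$. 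For these I would restrict to admissible subdiagrams via Lemma~\ref{l:dsa} and Proposition~\ref{p:admsd}, reducing to the type-$A$ situation, and use the explicit $\ell$-weight description of Proposition~\ref{p:lchA} together with the comultiplication information of Lemmas~\ref{l:comcom} and \ref{l:comult} to transport the needed vanishings through the classical limit.

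With both surjections out of $M(\lambda)$ in hand, closing the loop reduces to matching graded characters. For an upper bound on $\ch_t(M(\lambda))$, I would use the defining relations \eqref{e:Ndef} to set up a PBW-style spanning argument, producing a finite family of ordered monomials in the $x_{\alpha,r}^-$ with tightly restricted exponents, and then extract a spanning set of $\lie g$-highest-weight vectors matching the sum on the right of the conjectured formula \eqref{e:chM}. For a matching lower bound on $\ch_t(T(\lambda))$, I would exhibit explicit linearly independent $\lie g$-highest-weight vectors in $T(\lambda)$, realized inside $\otm_{i\in I}^{} M(\lambda(h_i)\omega_i)$, using the known graded characters of the Kirillov--Reshetikhin limits $M(m\omega_i)$ from \cite{cha:fer,chakle,cm:kr,cm:krg} combined with Lemma~\ref{l:hwvecs} and Proposition~\ref{p:sumofalgs}.

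The main obstacle, flagged by the authors in the introduction, will be the character-matching step whenever the support of $\lambda$ comes near the trivalent node. Already the subcase where $\{2,4\}\subseteq \supp(\lambda)$ (but node~$6$ is avoided) is substantially more delicate than the other multiplicity-free cases, requiring finer combinatorics both in the spanning argument for $M(\lambda)$ and in the vector-exhibition step for $T(\lambda)$. When $6\in\supp(\lambda)$ the situation degenerates further: the graded character of the KR module at node~$6$ is not multiplicity-free, per \cite{jap:rem}, so the explicit formula \eqref{e:chM} itself fails and one must engineer a substitute combinatorial model accommodating higher multiplicities in the $r\ge 3$ graded pieces of $M(\lambda)$. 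Bridging this gap, presumably by interpolating between the recursive Koszul-theoretic character formula of \cite[Theorem~2]{cg:proj} and a direct tensor-product analysis, is where the bulk of any proof of the conjecture in full generality would live.
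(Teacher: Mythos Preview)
The statement you are trying to prove is a \emph{conjecture}; the paper does not prove it in full generality but only under the restrictions of Theorem~\ref{t:main}. Your outline is an honest sketch of a strategy, and the overall architecture---chain of surjections $M(\lambda)\twoheadrightarrow L(\gb\lambda)\twoheadrightarrow T(\lambda)$ together with $M(\lambda)\twoheadrightarrow T(\lambda)$, closed by matching an upper bound on $\ch_t(M(\lambda))$ against a lower bound on $\ch_t(T(\lambda))$---is precisely the paper's method for the cases it does settle. So at the level of strategy you are aligned with the paper, and you correctly identify the character-matching step as the obstruction to the general case.

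However, there is a concrete factual error that garbles your discussion of where the difficulty lies. In the paper's labeling the trivalent node is node~$3$, not node~$6$. The Kirillov--Reshetikhin module $M(m\omega_6)$ \emph{is} multiplicity free (indeed $\ch_t(M(m\omega_6))=\sum_{r=0}^{m}\ch(V((m-r)\omega_6))t^r$, handled in \S\ref{ss:kr}), and the paper's main theorem explicitly covers supports such as $\{1,2,5,6\}$, $\{1,4,5,6\}$, and $\{2,4,6\}$. The node whose KR module fails to be multiplicity free is node~$3$; this is why the paper's standing hypothesis is $\lambda(h_3)=0$. Consequently your claim that ``when $6\in\supp(\lambda)$ the situation degenerates further'' is wrong, and your description of the delicate subcase is inverted: the case the paper cannot complete (under $\lambda(h_3)=0$) is $\{2,4\}\subseteq\supp(\lambda)$ together with $\{1\}$ or $\{5\}$ in the support (e.g.\ $\supp(\lambda)=\{1,2,4,5\}$), not the presence of node~$6$. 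The paper proves the upper bound \eqref{e:ub} for all $\lambda$ with $\lambda(h_3)=0$ and proves the lower bound \eqref{e:lb} only under the additional restriction; the remark closing \S\ref{ss:lb} explains exactly why the explicit linear-independence argument breaks there, and no Koszul-theoretic interpolation is available since those $\lambda$ fail the hypothesis of Theorem~\ref{t:proj}.
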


For the rest of the subsection assume that $\lie g$ is of type $E_6$ and that the nodes of the Dynkin diagram are labeled as in the introduction. We now state our main results.

\begin{thm}\label{t:main}
Let $\lambda\in P^+$ be such that $\lambda(h_3)=0$. Suppose that either $\{2,4\}\nsubseteq\supp(\lambda)$ or $\supp(\lambda)\subseteq\{2,4,6\}$. Then:
\begin{enumerate}
\item The first isomorphism in Conjecture \ref{c:mou} holds.
\item The second isomorphism in Conjecture \ref{c:mou} holds provided that $\overline\supp(\lambda)$ is of type $A$.
\end{enumerate}
\end{thm}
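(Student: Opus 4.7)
The plan is to prove (a) by sandwiching $M(\lambda)$ and $T(\lambda)$ between matching upper and lower bounds on the graded character, since the remark immediately before Proposition \ref{p:TqL} already produces a graded surjection $M(\lambda)\twoheadrightarrow T(\lambda)$. It therefore suffices to show $\ch_t(M(\lambda))\le\ch_t(T(\lambda))$ coefficient-wise, forcing that surjection to be a graded isomorphism; the common value will be \eqref{e:chM}. For the upper bound on $M(\lambda)$, I would use that the hypothesis $\lambda(h_3)=0$ makes $M(\lambda)$ factor through $\lie g[t:2]$, as noted in the introduction, since $\epsilon_i(\theta)\le 2$ for $i\ne 3$. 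Combining this truncation with the annihilation relations $x_{\alpha,r}^- v_\lambda=0$ for $\alpha\in R(\lambda,r)$ from Definition \ref{d:N}, the PBW theorem produces an explicit spanning set for $M(\lambda)$ whose weights and grades are transparent. Counting how many such spanning vectors can contribute to each $\lie g$-isotypic component in each degree should yield the multiplicity-free upper bound \eqref{e:ub} of the introduction.

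For the lower bound, I would realize $T(\lambda)$ inside $\bigotimes_{i\in I} M(\lambda(h_i)\omega_i)$. Each tensor factor with $i\ne 3$ is the classical limit of a Kirillov-Reshetikhin module (Corollary \ref{c:KR} together with Proposition \ref{p:TqL} applied to $\lambda(h_i)\omega_i$), and since the trivalent node is excluded from the support, each such factor has a known, multiplicity-free $\lie g$-character. The $\lie g$-isotypic decomposition of the tensor product can then be read off by branching, and nonzero highest-weight vectors of each prescribed weight inside $T(\lambda)$ are located via Lemma \ref{l:hwvecs}; since the gradings add across tensor factors, the match with \eqref{e:chM} becomes explicit once the decomposition is known. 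The main obstacle is the case $\{2,4\}\subseteq\supp(\lambda)\subseteq\{2,4,6\}$, flagged in the introduction as \emph{significantly more technically involved}. Here $\overline\supp(\lambda)$ necessarily passes through the trivalent node even though $\lambda(h_3)=0$, so the subdiagram in question is not of type $A$; the $\ell$-weight computations of Proposition \ref{p:lchA} are only directly available after restriction to admissible type-$A$ subdiagrams via Proposition \ref{p:admsd}. I would expect to handle this case by combining several such restrictions with a delicate analysis of how the defining relations of $M(\lambda)$ interact with the tensorands supported at the branch nodes $2$, $4$ and $6$, in order to produce the required highest-weight vectors of $T(\lambda)$ in the correct grades.

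Part (b) is then a short dimension-count argument under the additional hypothesis that $\overline\supp(\lambda)$ is of type $A$. Proposition \ref{p:LqM} supplies a surjection $M(\lambda)\twoheadrightarrow L(\gb\lambda)$, while Proposition \ref{p:TqL} gives a surjection $L(\gb\lambda)\twoheadrightarrow T(\lambda)$, and the composition is the same graded surjection $M(\lambda)\twoheadrightarrow T(\lambda)$ that part (a) identifies with an isomorphism. Since all three modules are finite dimensional, each intermediate surjection is forced to be an isomorphism, yielding $M(\lambda)\cong L(\gb\lambda)$ as claimed.
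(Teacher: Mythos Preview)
Your proposal is correct and follows essentially the same approach as the paper: part (a) is proved by the sandwich $m_{\mu,r}\le |\cal A_{\mu,r}|\le t_{\mu,r}$ (the paper's \eqref{e:ub} and \eqref{e:lb}), with the upper bound coming from a PBW spanning-set argument refined by Heisenberg-type relations and the lower bound from exhibiting enough nonzero highest-weight vectors inside truncations of the tensor product defining $T(\lambda)$; part (b) is exactly the chain $M(\lambda)\twoheadrightarrow L(\gb\lambda)\twoheadrightarrow T(\lambda)$ together with part (a). One small caveat: $L(\gb\lambda)$ is not a priori graded, so you should not assert that the composite is literally the canonical \emph{graded} surjection---but your final sentence already gives the correct argument, namely that equality of dimensions from part (a) forces each surjection to be an isomorphism.
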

Notice that part (a) of Theorem \ref{t:main} and Proposition \ref{p:TqL} together with the following proposition which will be proved in Subsection \ref{ss:LqM} imply part (b) of Theorem \ref{t:main}.

\begin{prop}\label{p:LqM}
Let $\lambda\in P^+$ be such that is of type $A$. Then, $L(\gb\lambda)$ is a quotient of $M(\lambda)$.
\end{prop}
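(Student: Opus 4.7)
The plan is to verify that the cyclic generator $v \in L(\gb\lambda)$ satisfies all of the defining relations \eqref{e:Ndef} of $M(\lambda)$; by universality this produces the desired surjection $M(\lambda) \twoheadrightarrow L(\gb\lambda)$. The relations $h_i v = \lambda(h_i) v$, $\lie n^+[t] v = 0$, and $(x_{\alpha_i}^-)^{\lambda(h_i)+1} v = 0$ are immediate from the weight condition, the highest-$\ell$-weight property of the preimage of $v$ in $V_q(\gb\lambda)$, and Theorem \ref{t:ciuqg}(d), respectively. The remaining easy relation $(\lie h\otimes t\mathbb C[t]) v = 0$ follows from $\bgb\lambda = \gb\omega_{\lambda,a}$: evaluating \eqref{e:PsiLambda} at $q=1$ shows that $h_{i,s}$ acts on the top weight vector of $\overline{V_q(\gb\lambda)}$ as the scalar $\lambda(h_i) a^s$, so after the $\tau_a$-twist $h_{i,s} v = \lambda(h_i)(a-a)^s v = 0$ for every $s\ge 1$.

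The heart of the proof is the vanishing $x_{\alpha,r}^- v = 0$ for every $\alpha \in R(\lambda,r)$. My strategy is to realize the top weight vector of $L(\gb\lambda)$ inside a tensor product of classical limits of Kirillov--Reshetikhin modules. Using Proposition \ref{p:fracqz}, write $\gb\lambda = \prod_i \gb\omega_{i,a_i,\lambda(h_i)}$ with all $a_i$ specializing to the common value $a$. Under the hypothesis that $\overline\supp(\lambda)$ is of type $A$, the type-$A$ realization of minimal affinizations (see \cite{cp:minsl}) gives $V_q(\gb\lambda)$ as a quotient of the $U_q(\tlie g)$-cyclic submodule of $W := \bigotimes_i V_q(\gb\omega_{i,a_i,\lambda(h_i)})$ generated by $w := \otimes_i v_i$, the tensor product of highest-$\ell$-weight vectors. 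Taking the $U_\mathbb A(\tlie g)$-admissible lattice generated by $w$, which is contained in the tensor product of the admissible lattices of the factors, specializing at $q=1$, and twisting by the common $\tau_a$, one obtains a $\lie g[t]$-module which surjects onto $L(\gb\lambda)$ and simultaneously maps to $\bigotimes_i L(\gb\omega_{i,a_i,\lambda(h_i)})$, with the top weight vector of $L(\gb\lambda)$ identified with $v = \otimes_i v_i$ on the top weight space.

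By the Chari--Moura classical-limit identification for KR modules (\cite{chakle, cm:kr}), each $L(\gb\omega_{i,a_i,\lambda(h_i)})$ is a quotient of $M(\lambda(h_i)\omega_i)$, so its top weight vector $v_i$ is annihilated by $x_{\alpha,r}^-$ for every $\alpha \in R(i,\lambda(h_i),r)$. Since $U(\lie g[t])$ is cocommutative and $x_{\alpha,r}^-$ is primitive,
\begin{equation*}
x_{\alpha,r}^- \cdot v \;=\; \sum_{j} v_1 \otimes \cdots \otimes (x_{\alpha,r}^- v_j) \otimes \cdots \otimes v_n,
\end{equation*}
and for $\alpha \in R(\lambda,r) = \bigcap_j R(j,\lambda(h_j),r)$ every summand vanishes, forcing $x_{\alpha,r}^- v = 0$. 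Through the identification above, the corresponding vanishing holds on the top weight vector of $L(\gb\lambda)$, completing the verification of \eqref{e:Ndef}.

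The main obstacle is the careful bookkeeping required in passing between the quantum and classical settings through admissible lattices: one must ensure that the sublattice $U_\mathbb A(\tlie g)\cdot w$ inside the tensor-product lattice of $W$ recovers, after specialization at $q=1$ and the common $\tau_a$-twist, a $\lie g[t]$-module in which relations satisfied by $v=\otimes_i v_i$ inside the ambient tensor product transport to relations on the top weight vector of $L(\gb\lambda)$. The type-$A$ hypothesis on $\overline\supp(\lambda)$ is essential both because it is what enables the tensor-product realization of the minimal affinization and because it forces all spectral parameters $a_i$ to specialize to the same $a$, making the common $\tau_a$-twist possible.
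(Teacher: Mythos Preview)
There is a genuine gap in your argument, and it lies exactly where you flag ``careful bookkeeping'' in the last paragraph: the map you construct goes in the wrong direction for transporting relations.

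What you actually produce is the $\lie g[t]$-module map
\[
L(\gb\lambda)\ \longrightarrow\ \bigotimes_i L(\gb\omega_{i,a_i,\lambda(h_i)})
\]
sending the top weight vector $v$ of $L(\gb\lambda)$ to $\otimes_i v_i$ (this is the map underlying Proposition~\ref{p:TqL}, whose image is $T(\lambda)$). Your primitivity computation correctly shows that $x_{\alpha,r}^-(\otimes_i v_i)=0$ in the tensor product for every $\alpha\in R(\lambda,r)$. But that only says the \emph{image} of $x_{\alpha,r}^-v$ vanishes; it does not force $x_{\alpha,r}^-v=0$ inside $L(\gb\lambda)$ unless the map above is injective on the weight space $\lambda-\alpha$. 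Injectivity of this map is equivalent to $L(\gb\lambda)\cong T(\lambda)$, which is precisely the second half of Conjecture~\ref{c:mou} and is not available at this point. In effect, your argument re-proves that $T(\lambda)$ is a quotient of $M(\lambda)$ (already noted after Definition~\ref{d:N}), not that $L(\gb\lambda)$ is.

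The paper circumvents this by staying at the quantum level, where the analogous inclusion $V_q(\gb\lambda)\hookrightarrow \bigotimes_i V_q(\gb\omega_{i,a_i,\lambda(h_i)})$ \emph{is} injective (Corollary~\ref{c:tpkrmin}). Using Proposition~\ref{p:lessrel} to reduce to the relations $x_{\alpha,1}^-v'=0$ for $\alpha\in R_1^+$, the paper handles the admissible roots via Proposition~\ref{p:admevrel} and, for the remaining $\beta_j$ with $7<j<20$, manufactures explicit elements $X_j,X_{j,1}\in U_\mathbb A(\tlie n^-)$ satisfying a relation $X_{j,1}v=a_j(q)X_jv+x_jv$ in $V_q(\gb\lambda)$ with $a_j(1)=a$ and $\bar x_j=0$. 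This is where the quantum tensor-product embedding, Lemma~\ref{l:x1}, and the $\ell$-weight information from Proposition~\ref{p:lchA} are used. Specializing at $q=1$ then gives $x_{\beta_j,1}^-\bar v=a\,x_{\beta_j}^-\bar v$ in $\overline{V_q(\gb\lambda)}$ itself, hence $x_{\beta_j,1}^-v'=0$ in $L(\gb\lambda)$ after the $\tau_a$-twist. The point is that the relation is established \emph{before} specialization, inside an honest submodule, so no injectivity of the classical-limit map is required.
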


As a byproduct of the proof of Theorem \ref{t:main} we are able to compute $\ch_t(M(\lambda))$ for $\lambda$ as in the theorem. In particular, we compute $\ch(V_q(\gb\lambda))$ for all $\gb\lambda\in\cal P_q^+$ such that $\wt(\gb\lambda)$ satisfies the hypothesis of part (b) of the theorem. Let us now present these formulas and, along the way, explain the strategy of the proof of Theorem \ref{t:main}(a).

Fix $\lambda\in P^+$ and, given $\mu\in P$ and $r\in\mathbb Z_{\ge 0}$, set
$$m_{\mu,r}= m_{\mu,r}(M(\lambda)) \qquad\text{and}\qquad t_{\mu,r}=m_{\mu,r}(T(\lambda)).$$
We have already seen that $t_{\mu,r}\le m_{\mu,r}$. Therefore, in order to prove the first isomorphism of Conjecture \ref{c:mou}, it suffices to show that
\begin{equation}\label{e:main}
m_{\mu,r}\le t_{\mu,r} \qquad\text{for all}\qquad \mu\in P^+, r\in\mathbb Z_{\ge 0}.
\end{equation}
For $\gbr r\in\mathbb Z^6$, set
$$\wt(\gbr r)=\lambda-r_1(\omega_2-\omega_5)-r_2(\omega_4-\omega_1)-r_3(\omega_2-\omega_4+\omega_5)- r_4(\omega_1-\omega_2+\omega_4) -r_5(\omega_2-\omega_3+\omega_4)-r_6\omega_6$$
and
$$\gr(\gbr r)= r_1+r_2+r_3+r_4+r_5+r_6.$$
Let also
$$\cal A=\{\gbr r\in\mathbb Z_{\ge 0}^6: r_6\le m_6,r_3\le m_5, r_4\le m_1, r_1+r_3+r_5\le m_2, r_2+r_4+r_5\le m_4\},$$
$$\cal A_{\mu} = \{\gbr r\in\cal A:\wt(\gbr r)=\mu\}, \qquad \cal A_r = \{\gbr r\in\cal A:\gr(\gbr r)=r\}, \quad\text{and}\quad \cal A_{\mu,r}=\cal A_\mu\cap\cal A_r.$$
The omission of the dependence of $\wt$ and $\cal A$ on $\lambda$ in the notation will not create confusion.
One easily checks that the function $\wt:\mathbb Z^6\to P$ is injective and, if $\gbr r\in\cal A$, then $\wt(\gbr r)\in P^+$. In particular,
\begin{equation}\label{e:multfree}
|\cal A_\mu|\le 1 \qquad\text{for all}\qquad \mu\in P^+.
\end{equation}
The basic idea for proving \eqref{e:main} is the same one used in \cite{cm:kr,cm:krg,mou:reslim}. Namely, in Subsection \ref{ss:ub}, we will use the defining relations of $M(\lambda)$ to show that,
\begin{equation}\label{e:ub}
\text{if}\qquad \lambda(h_3)=0, \qquad\text{then}\qquad m_{\mu,r}\le |\cal A_{\mu,r}|.
\end{equation}
Moreover,  for $\lambda$ as in Theorem \ref{t:main}, by performing some explicit computations in $T(\lambda)$, we show in  Subsection \ref{ss:lb} that
\begin{equation}\label{e:lb}
t_{\mu,r}\ge |\cal A_{\mu,r}|.
\end{equation}
Clearly \eqref{e:ub} and \eqref{e:lb} together imply \eqref{e:main}. Moreover,
\begin{equation}\label{e:chM}
\ch_t(M(\lambda)) = \sum_{\gbr r\in\cal A} \ch(V(\wt(\gbr r)))t^{\gr(\gbr r)}
\end{equation}
for all $\lambda$ as in Theorem \ref{t:main}. In particular, for $\lambda$ as in Theorem \ref{t:main}(b) and $\gb\lambda\in\cal P_q^+$ such that $V_q(\gb\lambda)$ is a minimal affinization of $V_q(\lambda)$, we have
\begin{equation}\label{e:chL}
\ch(V_q(\gb\lambda)) = \sum_{\gbr r\in\cal A} \ch(V(\wt(\gbr r))).
\end{equation}

\begin{rem}
Similar results in the case that $\lie g$ is of classical type or $G_2$ were obtained in \cite{cm:kr,cm:krg,mou:reslim} (however, the definition of the modules $T(m\omega_i)$ requires some extra care in the non simply laced case). Equation \eqref{e:chM} (and similar ones for general $\lie g$) was predicted in \cite{jap:rem} in the case that $\lambda=m\omega_i$ for some $i\in I, m\in\mathbb Z_{\ge 0}$. However, the meaning of the gradation in \cite{jap:rem} is related to the quantum context, whereas here it appears by computing the classical limit. It is not clear to us why these two gradations coincide. The formulas in \cite{jap:rem} were obtained by assuming the Kirillov-Reshetikhin conjecture whose proof was later completed in \cite{her:krc}. Our results give an alternate proof of these formulas for $\lie g$ of type $E_6$ and $i\ne 3$. As mentioned in the introduction, $M(m\omega_3)$ is not multiplicity free in general. Using the methods of this paper, we are able to prove that the isotypical components of $M(m\omega_3)[r]$ are exactly as given by \cite{jap:rem}. However, so far we could only obtain an upper bound for $m_{\mu,r}$ which is most often larger than the actual value of $m_{\mu,r}$.
\end{rem}

We end this subsection by reviewing a construction used in \cite[\S2.6]{cm:kr} which will be useful for us as well. Let $V_r$,  $0\le r\le k$,  be  $\lie g$-modules  such that
\begin{equation}\label{e:condun}
{\rm Hom}_{\lie g}(\lie g\otimes V_r,  V_{r+1})\ne 0,\ \ {\rm Hom}_{\lie g}(\wedge^2(\lie g)\otimes V_r,  V_{r+2})=0,\ \ 0\le r\le k-1,
\end{equation}
where we assume that $V_{k+1}=0$. Fix non-zero elements $p_r\in{\rm Hom}_{\lie g}(\lie g\otimes V_r, V_{r+1})$, $0\le r\le k-1$, and set $p_k=0$. It is easily checked that the following formulas extend the canonical  $\lie g$-module structure to a graded  $\lie  g[t]$-module structure on $V=\oplus_{r=1}^k V_r$:
\begin{equation}\label{e:construction}
(x\otimes t)w=p_r(x\otimes w), \quad  (x\otimes t^s)w=0,\quad\text{for all}\quad  x\in\lie g, w\in V_r,  1\le r\le k, s\ge 2.
\end{equation}
Moreover, $V[r]\cong V_{r}$ for all $0\le r\le k$. Also, if $V_0=U(\lie g)w_0$ and the maps $p_r$ for $r<k$ are all surjective, then $V=U(\lie n^-[t])w_0$.

\subsection{Projectivity}\label{ss:proj}
If $\overline\supp(\lambda)$ is not of type $A$, then Proposition \ref{p:LqM} is probably false. In fact, most likely, $M(\lambda)$ is then a proper quotient of $L(\gb\lambda)$. We now explain the motivation for studying the modules $M(\lambda)$ beyond the cases associated to minimal affinizations from the perspective of \cite{cg:proj}. We begin with following straightforward lemma which has been implicitly used in \cite{cg:proj}.

\begin{lem}\label{l:truncate}
Let $r\in\mathbb Z_{> 0}$ and $V$ be a $\lie g[t]$-module generated by a vector $v$ satisfying $(\lie g\otimes t^r\mathbb C[t])v=0.$ Then,  $(\lie g\otimes t^r\mathbb C[t])V=0.$
\end{lem}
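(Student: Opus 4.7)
The plan is to reduce the statement to the observation that $\lie g \otimes t^r \mathbb C[t]$ is not merely a subspace but an ideal of the Lie algebra $\lie g[t]$. Writing $J = \lie g \otimes t^r \mathbb C[t]$, the containment $[\lie g \otimes t^s, \lie g \otimes t^{r+k}] \subseteq \lie g \otimes t^{r+s+k} \subseteq J$ for all $s, k \ge 0$ shows that $[\lie g[t], J] \subseteq J$. I would then argue that this ideal property, lifted to $U(\lie g[t])$, implies the inclusion $J\, U(\lie g[t]) \subseteq U(\lie g[t])\, J$; once this is established, the conclusion is immediate since
\[
JV = J\,U(\lie g[t])\,v \subseteq U(\lie g[t])\, J\, v = U(\lie g[t]) \cdot 0 = 0,
\]
using the hypothesis $Jv = 0$ and the fact that $V = U(\lie g[t])v$ (which follows from $v$ being a cyclic generator).

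The only step needing verification is the inclusion $J\,U(\lie g[t]) \subseteq U(\lie g[t])\, J$, which I would prove by induction on the length $n$ of a monomial $X = x_1 x_2 \cdots x_n$ in elements of $\lie g[t]$. For $n=0$ the claim is trivial, and for $n \ge 1$ and $y \in J$ one rewrites $yX = x_1(yx_2\cdots x_n) + [y,x_1]\,x_2\cdots x_n$; the first summand lies in $U(\lie g[t])\, J$ by the inductive hypothesis applied to $y x_2 \cdots x_n$, while the second lies there because $[y,x_1] \in J$ and $J$ is stable under further left multiplication by $U(\lie g[t])$ modulo the same argument applied to the shorter monomial $x_2\cdots x_n$. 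There is no serious obstacle here; the lemma is a direct formal consequence of the ideal property and a standard PBW-style commutator push.
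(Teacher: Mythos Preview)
Your proof is correct and takes essentially the same approach as the paper: both argue by induction on the length of a monomial in $U(\lie g[t])$, using the commutator identity together with the fact that $J=\lie g\otimes t^r\mathbb C[t]$ is an ideal of $\lie g[t]$. Your phrasing is slightly more abstract (first proving the inclusion $J\,U(\lie g[t])\subseteq U(\lie g[t])\,J$ and then applying it to $v$), whereas the paper works directly with vectors $w=(x_1\otimes t^{r_1})\cdots(x_m\otimes t^{r_m})v$, but the inductive step is identical.
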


\begin{proof}
Let $x\in\lie g, s\ge r$, and $w=(x_1\otimes t^{r_1})\cdots (x_2\otimes t^{r_m})v$ for some $m,r_j\in\mathbb Z_\ge 0, x_j\in \lie g, j=1,\dots,m$.  We proceed by induction on $m$. If $m=0$, we have $(x\otimes t^s)w=0$ by hypothesis. Assume $m>0$, let $w'=(x_2\otimes t^{r_2})\cdots (x_m\otimes t^{r_m})v$ and assume, by induction hypothesis, that $(y\otimes t^s)w'=0$ for all $y\in\lie g, s\ge r$. Then, given $x\in\lie g$ and $s\ge r$, we have
\begin{align*}
(x\otimes t^s)w=(x_1\otimes t^{r_1})(x\otimes t^s)w'+([x,x_1]\otimes t^{s+s_1})w'.
\end{align*}
Both summands are zero by the induction hypothesis on $m$.
\end{proof}

The next proposition follows immediately from the above lemma and the definition of $M(\lambda)$.

\begin{prop}
Let $\lambda\in P^+$ and $r>0$ be such that $R(\lambda,r)=R^+$. Then, $(\lie g\otimes t^r\mathbb C[t])M(\lambda)=0.$\hfill\qedsymbol
\end{prop}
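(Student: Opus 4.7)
The plan is to apply Lemma \ref{l:truncate} directly: it suffices to verify that the canonical generator $v_\lambda$ of $M(\lambda)$ (the image of $1\in U(\lie g[t])$) is annihilated by $\lie g\otimes t^r\mathbb C[t]$, and then let the lemma propagate this vanishing to all of $M(\lambda)$.

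To check the hypothesis, I would decompose $\lie g=\lie n^-\oplus \lie h\oplus\lie n^+$ and handle the three summands separately using the defining relations \eqref{e:Ndef}. For the $\lie n^+$ piece, the relation $\lie n^+[t]\,v_\lambda=0$ already annihilates $\lie n^+\otimes t^s\mathbb C[t]$ for every $s\ge 0$, hence in particular for $s\ge r$. For the Cartan piece, $(\lie h\otimes t\mathbb C[t])\,v_\lambda=0$ handles $s\ge 1$, which suffices because $r>0$. For the $\lie n^-$ piece, I would use the hypothesis $R(\lambda,r)=R^+$ combined with the monotonicity observation \eqref{e:Rgrows}: since $R(\lambda,r)\subseteq R(\lambda,s)$ for all $s\ge r$, one gets $R(\lambda,s)=R^+$ for all $s\ge r$, and hence $x_{\alpha,s}^-\,v_\lambda=0$ for every $\alpha\in R^+$ and every $s\ge r$. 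Adding up these three pieces gives $(\lie g\otimes t^r\mathbb C[t])\,v_\lambda=0$.

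Since $M(\lambda)=U(\lie g[t])\,v_\lambda$ by definition, Lemma \ref{l:truncate} now applies with $v=v_\lambda$ and yields $(\lie g\otimes t^r\mathbb C[t])\,M(\lambda)=0$, as claimed. There is no genuine obstacle in the argument; the only small subtlety worth being explicit about is that the Cartan relation in \eqref{e:Ndef} is only imposed for positive powers of $t$, but this is precisely where the assumption $r>0$ is used.
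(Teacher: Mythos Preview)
Your proposal is correct and is precisely the argument the paper has in mind: the paper states that the proposition ``follows immediately from the above lemma and the definition of $M(\lambda)$'' and records no further details, while you have simply written out those details (checking $\lie n^+,\lie h,\lie n^-$ separately and invoking \eqref{e:Rgrows} for the monotonicity of $R(\lambda,\cdot)$). There is nothing to add.
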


If $V$ is a $\lie g[t]$-module as in Lemma \ref{l:truncate}, then the canonical projection $\lie g[t]\to \lie g[t:r]:=\lie g[t]/\lie g\otimes t^r\mathbb C[t]$ induces a $\lie g[t:r]$-module structure on $V$. Chari and Greenstein in \cite{cg:kos,cg:proj} initiated the study of the category $\cal G_2$ of graded $\lie g[t:2]$-modules with finite-dimensional graded pieces (they do not assume $\lie g$ is simply laced). Given a subset $\Gamma$ of $P^+\times\mathbb Z_\ge 0$, they consider the full subcategories $\cal G_2(\Gamma)$ of $\cal G_2$ consisting of modules $V$ such that $V(\mu)$ is an irreducible constituent of $V[r]$ only if $(\mu,r)\in\Gamma$.
In particular, they consider subsets $\Gamma$ of the following form. Given $\Psi\subseteq R^+$ and $\lambda\in P$, set
$$\Gamma(\lambda,\Psi) = \{(\mu,r)\in P\times\mathbb Z_\ge 0: \lambda-\mu=\sum_{\beta\in \Psi} n_\beta\beta, n_\beta\in\mathbb Z_\ge 0, \sum_{\beta\in\Psi} n_\beta = r\}.$$
Notice that $(\lambda,0)\in\Gamma(\lambda,\Psi)$ for any choice of $\Psi$ and that $\Gamma(\lambda,\emptyset)=\{(\lambda,0)\}$. If we regard $V(\lambda)$ as a module for $\lie g[t:2]$ by pulling back the canonical projection $\lie g[t:2]\to \lie g[t:1]=\lie g$, then $V(\lambda)$ is an object of $\cal G_2(\Gamma(\lambda,\Psi))$. The full strength of the results of \cite{cg:proj} is realized when $\Psi$ is either empty or of the form $\Psi_\nu$ for some $\nu\in P$ where
$$\Psi_\nu = \{\alpha\in R^+: (\alpha,\nu) = \max\{(\beta,\nu):\beta\in R^+\}\}$$
and $(\cdot,\cdot)$ is the bilinear form on $P\times P$ induced from the Killing form of $\lie g$.

For $\lambda\in P^+$ such that $R(\lambda,2)=R^+$, set $\Psi^\lambda=R^+\backslash R(\lambda,1)$. The following theorem is a particular case of \cite[Theorem 1]{cg:proj}.
\begin{thm}\label{t:proj}
Let $\lambda\in P^+$ be such that $R(\lambda,2)=R^+$ and suppose that either $\Psi^\lambda=\emptyset$ or $\Psi^\lambda=\Psi_\nu$ for some $\nu\in P$. Then, $M(\lambda)$ is the projective cover of $V(\lambda)$ in the category $\cal G_2(\Gamma(\lambda,\Psi^\lambda))$.\hfill\qedsymbol
\end{thm}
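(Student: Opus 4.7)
The plan is to establish Theorem \ref{t:proj} via the universal-property characterization of projective covers: verify that $M(\lambda)$ (i) lies in $\cal G_2(\Gamma(\lambda,\Psi^\lambda))$, (ii) enjoys the universal lifting property in this category, and (iii) surjects onto $V(\lambda)$ with essential kernel. For (i), the hypothesis $R(\lambda,2)=R^+$ combined with the immediately preceding proposition yields $(\lie g\otimes t^2\mathbb C[t])M(\lambda)=0$, so $M(\lambda)\in\cal G_2$. Its graded pieces are controlled as follows: $M(\lambda)[0]\cong V(\lambda)$ from the defining relations, and a PBW-type argument using $\lie n^+[t]v_\lambda=0=(\lie h\otimes t\mathbb C[t])v_\lambda$ together with $(\lie g\otimes t^2)v_\lambda=0$ shows that $M(\lambda)[r]$ is spanned by $U(\lie n^-)(x_{\beta_1,1}^-\cdots x_{\beta_r,1}^-)v_\lambda$ with $\beta_i\in R^+$. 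The identity $[x_{\beta,1}^-,x_{\gamma,1}^-]=x_{\beta+\gamma,2}^-\in\lie g\otimes t^2$ shows the $\beta_i$'s can be freely reordered, and since $x_{\beta,1}^-v_\lambda=0$ for $\beta\in R(\lambda,1)=R^+\setminus\Psi^\lambda$, we may assume all $\beta_i\in\Psi^\lambda$. Hence every $\lie g$-highest weight vector in $M(\lambda)[r]$ has weight $\lambda-\sum_{\beta\in\Psi^\lambda}n_\beta\beta$ with $\sum n_\beta=r$, matching the constituent condition defining $\Gamma(\lambda,\Psi^\lambda)$.

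For (ii), given a surjection $\pi:V\twoheadrightarrow W$ in $\cal G_2(\Gamma(\lambda,\Psi^\lambda))$ and a graded $\lie g[t]$-map $\phi:M(\lambda)\to W$, I would lift $\phi(v_\lambda)$ to a weight vector $\tilde v\in V_\lambda[0]$ and verify that all defining relations of $M(\lambda)$ are satisfied by $\tilde v$. The relations $\lie n^+[t]\tilde v=0$ and $(\lie h\otimes t\mathbb C[t])\tilde v=0$ follow by weight arguments: any nonzero contribution would produce a weight vector of weight $\geq\lambda$ at degree $\geq 1$, forcing a constituent $V(\mu')$ with $\mu'\geq\lambda$ and $r\geq 1$, but the only pair of weight $\geq\lambda$ in $\Gamma(\lambda,\Psi^\lambda)$ is $(\lambda,0)$. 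The Serre-type relation $(x_{\alpha_i}^-)^{\lambda(h_i)+1}\tilde v=0$ follows because $U(\lie g)\tilde v\subseteq V[0]$ is a finite-dimensional highest-weight $\lie g$-module of highest weight $\lambda$, hence isomorphic to $V(\lambda)$ by Theorem \ref{t:ciuqg}.

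The main obstacle is the remaining relation $x_{\alpha,r}^-\tilde v=0$ for $\alpha\in R(\lambda,r)$. The strategy is to analyze the $\lie g$-submodule $U(\lie g)(x_{\alpha,r}^-\tilde v)\subseteq V[r]$: its highest weights $\mu$ satisfy $\mu\leq\lambda-\alpha$ and, by hypothesis on $V$, $\lambda-\mu=\sum_{\beta\in\Psi^\lambda}n_\beta\beta$ with $\sum n_\beta=r$. When $\Psi^\lambda=\emptyset$, the second condition forces $r=0$ and $\mu=\lambda$, incompatible with $\mu\leq\lambda-\alpha$ for $\alpha\in R^+$, so the vanishing is immediate. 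When $\Psi^\lambda=\Psi_\nu$, the roots in $\Psi_\nu$ are precisely those maximizing $(\cdot,\nu)$ on $R^+$; a reduction via the embedding $T(\lambda)\hookrightarrow\bigotimes_{i\in I}M(\lambda(h_i)\omega_i)$ together with the identity $R(\lambda,r)=\bigcap_{i\in I}R(i,\lambda(h_i),r)$ shows that $\alpha\in R(\lambda,r)$ is incompatible with any such decomposition of $\lambda-\mu$ at degree $r$, forcing the vanishing. This root-combinatorial step, carried out in detail in \cite{cg:proj}, is what couples the intrinsic invariant $R(\lambda,r)$ of $M(\lambda)$ to the categorical datum $\Psi^\lambda$.

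Finally, for (iii), since $M(\lambda)[0]=V(\lambda)$ is irreducible and generates $M(\lambda)$, its unique maximal graded $\lie g[t]$-submodule is $\bigoplus_{r\geq 1}M(\lambda)[r]$. Hence $M(\lambda)$ is indecomposable, the canonical projection $M(\lambda)\twoheadrightarrow V(\lambda)$ is essential, and the identification of $M(\lambda)$ as the projective cover of $V(\lambda)$ in $\cal G_2(\Gamma(\lambda,\Psi^\lambda))$ is complete.
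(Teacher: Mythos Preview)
The paper does not prove this theorem at all: it is stated as ``a particular case of \cite[Theorem 1]{cg:proj}'' and closed with a \qedsymbol, i.e., it is quoted from the Chari--Greenstein paper as an external input. There is therefore no ``paper's own proof'' to compare against; your proposal is an attempt to reconstruct the argument of \cite{cg:proj}, which the present paper deliberately does not reproduce.

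That said, your sketch has a genuine gap in step (ii), precisely at the point you flag. To verify $x_{\alpha,r}^-\tilde v=0$ for $\alpha\in R(\lambda,r)$ and an \emph{arbitrary} object $V$ of $\cal G_2(\Gamma(\lambda,\Psi^\lambda))$, you invoke ``a reduction via the embedding $T(\lambda)\hookrightarrow\bigotimes_{i\in I}M(\lambda(h_i)\omega_i)$'' together with $R(\lambda,r)=\bigcap_i R(i,\lambda(h_i),r)$. Neither is relevant here: that embedding is a statement about the specific module $T(\lambda)$, not about $V$, and the displayed identity is simply the definition of $R(\lambda,r)$. What is actually needed (for $r=1$, the only nontrivial case) is a root-theoretic statement: if $\alpha\in R^+\setminus\Psi_\nu$ then the weight $\lambda-\alpha$ cannot occur in any $V(\lambda-\beta)$ with $\beta\in\Psi_\nu$, or more precisely that $x_{\alpha,1}^-\tilde v$ is forced to be a highest-weight vector whose weight $(\lambda-\alpha,1)$ lies outside $\Gamma(\lambda,\Psi_\nu)$. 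This does not follow from weight bounds alone (for general $\nu\in P$ one can have $\beta\in\Psi_\nu$ with $\beta<\alpha$ and $\alpha\notin\Psi_\nu$), and requires the finer analysis carried out in \cite{cg:proj}. Since you end up deferring this step to \cite{cg:proj} anyway, your proposal does not improve on the paper's treatment, which is simply to cite that reference.
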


For $\lambda$ as in Theorem \ref{t:proj}, \cite[Theorem 2]{cg:proj} gives a formula for computing the graded character of $M(\lambda)$ by induction on the cardinality of the set $\Gamma(\lambda,\Psi^\lambda)$.

Let us return to the case that $\lie g$ is of type $E_6$.
It follows from the proof of Theorem \ref{t:main} (see Lemma \ref{l:coord}  below)  that $M(\lambda)$ is a module as in Lemma \ref{l:truncate} with $r=3$. Moreover, if $\lambda(h_3)=0$, then we can take $r=2$.

\begin{lem}\label{l:proj}
Let $\lambda\in P^+$ be such that $\lambda(h_3)=0$ and $\{2,4\}\nsubseteq\supp(\lambda)$. Then, either $\Psi^\lambda=\emptyset$ or there exists $\nu\in P$ such that $\Psi^\lambda=\Psi_\nu$.
\end{lem}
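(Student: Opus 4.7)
The plan is to compute $\Psi^\lambda$ directly from the combinatorics of the $E_6$ root system, then exhibit $\nu$ by a short case analysis.

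First, I would pin down $R(i,\lambda(h_i),1)$ for each $i\in I$ by invoking the description of $R(i,m,r)$ established in Lemma \ref{l:coord}. The needed outcome is: $R(i,\lambda(h_i),1)=R^+$ whenever $\epsilon_i(\theta)\leq 1$ (equivalently, $i\in\{1,5\}$) or $\lambda(h_i)\leq 1$; and $R(i,\lambda(h_i),1)=\{\alpha\in R^+:\epsilon_i(\alpha)\leq 1\}$ when $i\in\{2,4,6\}$ and $\lambda(h_i)\geq 2$. Since $\lambda(h_3)=0$ removes $i=3$ from the intersection defining $R(\lambda,1)$, and $\epsilon_i(\theta)=2$ for $i\in\{2,4,6\}$ so that $\{\alpha:\epsilon_i(\alpha)\geq 2\}=\Psi_{\omega_i}$, this yields
\[
\Psi^\lambda \;=\; \bigcup_{i\in S}\Psi_{\omega_i}, \qquad S\;:=\;\{i\in\{2,4,6\}:\lambda(h_i)\geq 2\}.
\]

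Second, I would establish the structural fact $\Psi_{\omega_6}=\{\theta\}$. A direct Cartan-matrix computation gives $\theta(h_j)=\delta_{j,6}$, so $\theta=\omega_6$ as fundamental weights. For any positive root $\alpha\neq\pm\alpha_6$ with $\epsilon_6(\alpha)=2$, simple-lacedness forces $|\alpha(h_6)|\leq 1$; combined with $\alpha(h_6)=2\epsilon_6(\alpha)-\epsilon_3(\alpha)=4-\epsilon_3(\alpha)$ and $\epsilon_3(\alpha)\leq\epsilon_3(\theta)=3$ this pins down $\epsilon_3(\alpha)=3$. Repeating the same inspection at nodes $2,4,1,5$ and imposing the length condition $(\alpha,\alpha)=2$ discards every remaining candidate except $\alpha=\theta$. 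Since $\epsilon_2(\theta)=\epsilon_4(\theta)=2$, this gives the key absorption $\Psi_{\omega_6}=\{\theta\}\subseteq\Psi_{\omega_2}\cap\Psi_{\omega_4}$.

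Finally, a case analysis. The hypothesis $\{2,4\}\nsubseteq\supp(\lambda)$ forces $|S\cap\{2,4\}|\leq 1$. If $S=\emptyset$ then $\Psi^\lambda=\emptyset$. If $S\subseteq\{6\}$ take $\nu=\omega_6$, yielding $\Psi^\lambda=\Psi_{\omega_6}=\Psi_\nu$. Otherwise $S\cap\{2,4\}=\{j\}$ for a unique $j$, and the absorption above gives $\Psi^\lambda=\Psi_{\omega_j}\cup\Psi_{\omega_6}=\Psi_{\omega_j}=\Psi_\nu$ with $\nu=\omega_j$.

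The main obstacle is the very first step: extracting the precise shape of $R(i,m,1)$ for $i\in\{2,4,6\}$ and $m\geq 2$. The inclusion $\{\alpha:\epsilon_i(\alpha)\leq 1\}\subseteq R(i,m,1)$ follows by an easy induction on $\epsilon_i(\alpha)$ via $x_{\alpha,1}^-=\pm[x_{\alpha-\alpha_j,1}^-,x_{\alpha_j}^-]$ together with $x_{\alpha_j}^-v_{i,m}=0$ for $j\neq i$; the reverse inclusion (i.e., $x_{\alpha,1}^-v_{i,m}\neq 0$ whenever $\epsilon_i(\alpha)=2$ and $m\geq 2$) requires the finer structural information about $M(m\omega_i)$ encoded in Lemma \ref{l:coord}, which itself rests on the character computations underlying Theorem \ref{t:main}.
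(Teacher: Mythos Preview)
Your approach is essentially the paper's: compute $\Psi^\lambda$ from the sets $R(i,\lambda(h_i),1)$ via Lemma~\ref{l:coord} and \eqref{e:coord}, then run a short case analysis on which of $2,4,6$ lie in the support. The paper simply lists the four cases $\supp(\lambda)\subseteq\{1,5\},\{1,5,6\},\{1,2,5,6\},\{1,4,5,6\}$ and reads off $\Psi^\lambda$ by inspection of Table~1; your absorption argument $\Psi_{\omega_6}=\{\theta\}\subseteq\Psi_{\omega_2}\cap\Psi_{\omega_4}$ is a clean conceptual repackaging of that same inspection.

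There is, however, a slip in your first step. You assert $R(i,\lambda(h_i),1)=R^+$ whenever $\lambda(h_i)\le 1$, so that only nodes with $\lambda(h_i)\ge 2$ enter $S$. This is false for $\lambda(h_i)=1$: equation \eqref{e:coord}, established in \S\ref{ss:kr}, gives $R(i,m,1)=R'(i,m,1)$ for every $m>0$, not just $m\ge 2$. For example $x_{\beta_{30},1}^-v_{6,1}\ne 0$ already in $M(\omega_6)$, so $\beta_{30}\notin R(6,1,1)$. The correct set is therefore $S=\supp(\lambda)\cap\{2,4,6\}$; with that one-symbol fix your displayed formula $\Psi^\lambda=\bigcup_{i\in S}\Psi_{\omega_i}$ and the ensuing case analysis go through verbatim, and the hypothesis $\{2,4\}\nsubseteq\supp(\lambda)$ is exactly what forces $|S\cap\{2,4\}|\le 1$. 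As written, your argument would wrongly output $\Psi^\lambda=\emptyset$ for $\lambda=\omega_2$, whereas in fact $\Psi^{\omega_2}=\Psi_{\omega_2}$.
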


\begin{proof}
Recalling that $(\alpha_i,\nu) = \frac{1}{2}(\alpha_i,\alpha_i)\nu(h_i)$ and using the characterizations of $R(\lambda,1)$ given by \eqref{e:coord}, one easily checks by inspection of Table 1 below that
\begin{enumerate}
\item $\supp(\lambda)\subseteq\{1,5\}\Rightarrow \Psi^\lambda=\emptyset$.
\item $6\in \supp(\lambda)\subseteq\{1,5,6\}\Rightarrow \Psi^\lambda=\Psi_{\omega_6}$.
\item $2\in \supp(\lambda)\subseteq\{1,2,5,6\}\Rightarrow \Psi^\lambda=\Psi_{\omega_2}$.
\item $4\in \supp(\lambda)\subseteq\{1,4,5,6\}\Rightarrow \Psi^\lambda=\Psi_{\omega_4}$.
\end{enumerate}
Clearly $\lambda$ satisfies the hypothesis of the lemma iff it satisfies one of the conditions (a)-(d) above.
\end{proof}

This immediately implies the following corollary of Theorem \ref{t:proj}.

\begin{cor}\label{c:Mproj}
Let $\lambda$ be as in Lemma \ref{l:proj}.  Then, $M(\lambda)$ is the projective cover of $V(\lambda)$ in the category $\cal G_2(\Gamma(\lambda,\Psi^\lambda))$.\hfill\qedsymbol
\end{cor}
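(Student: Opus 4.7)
The plan is to deduce the corollary as a direct application of Theorem \ref{t:proj} once two hypotheses are verified for $\lambda$ satisfying $\lambda(h_3)=0$ and $\{2,4\}\nsubseteq\supp(\lambda)$: first, that $R(\lambda,2)=R^+$, and second, that $\Psi^\lambda$ is either empty or equal to $\Psi_\nu$ for some $\nu\in P$. The second of these is precisely the content of Lemma \ref{l:proj}, which has already been proved by inspection of the four cases $\supp(\lambda)\subseteq\{1,5\}$, $\{1,5,6\}$, $\{1,2,5,6\}$, $\{1,4,5,6\}$.

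For the first hypothesis, I would invoke the observation made just before Lemma \ref{l:proj}, namely that $\lambda(h_3)=0$ forces $M(\lambda)$ to factor through $\lie g[t:2]$. More explicitly, combining the defining relations of $M(\lambda)$ in Definition \ref{d:N} with Lemma \ref{l:truncate} shows that $(\lie g\otimes t^r\mathbb C[t])M(\lambda)=0$ is equivalent to the single condition $(\lie g\otimes t^r\mathbb C[t])v_\lambda=0$ on the highest-weight generator; since the relations on $v_\lambda$ already kill $\lie n^+\otimes t^s\mathbb C[t]$ and $\lie h\otimes t\mathbb C[t]$, the remaining requirement $x_{\alpha,r}^-v_\lambda=0$ for every $\alpha\in R^+$ translates exactly into $R(\lambda,r)=R^+$. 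So the statement from the paragraph preceding Lemma \ref{l:proj} gives $R(\lambda,2)=R^+$ for free, using Lemma \ref{l:coord} (which supplies the characterization of $R(\lambda,1)$ via \eqref{e:coord}) to check the remaining root vectors individually.

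With both hypotheses in place, Theorem \ref{t:proj} yields at once that $M(\lambda)$ is the projective cover of $V(\lambda)$ in $\cal G_2(\Gamma(\lambda,\Psi^\lambda))$. I do not anticipate a hard step here: the proof is essentially bookkeeping, since the substantive case-by-case verification has already been carried out in Lemma \ref{l:proj} and the truncation behavior $R(\lambda,2)=R^+$ is an output of the analysis underlying Theorem \ref{t:main}. If any obstacle is to appear, it would be in confirming that the truncation claim made informally in the paragraph before Lemma \ref{l:proj} is genuinely established at this point in the paper (i.e.\ does not require forward reference to results in Section \ref{s:proof} beyond Lemma \ref{l:coord}); if so, a brief additional remark citing the exact place in the proof of Theorem \ref{t:main} where $(\lie g\otimes t^2\mathbb C[t])v_\lambda=0$ is verified would suffice to close the argument.
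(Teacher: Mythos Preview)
Your proposal is correct and matches the paper's approach exactly: the paper states the corollary as an immediate consequence of Theorem \ref{t:proj} once Lemma \ref{l:proj} supplies the condition on $\Psi^\lambda$ and the preceding paragraph (with its forward reference to Lemma \ref{l:coord}) supplies $R(\lambda,2)=R^+$. Your caveat about the forward reference is apt, since the paper does indeed rely on Lemma \ref{l:coord} from Section \ref{s:proof} for that truncation claim.
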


Similarly to the proof of Lemma \ref{l:proj}, one easily checks that if $\{2,4\}\subseteq \supp(\lambda)$, then $\Psi^\lambda\ne\emptyset$ and $\Psi^\lambda\ne\Psi_{\nu}$ for all $\nu\in P$. Therefore, $\lambda$ satisfies the hypothesis of Theorem \ref{t:proj} iff it satisfies the hypothesis of Lemma \ref{l:proj}. It follows that every $\lambda$ as in Theorem \ref{t:proj} satisfies the hypothesis of Theorem \ref{t:main}. On the other hand,
if $\lambda$ satisfies the hypothesis of Theorem \ref{t:main} but not the one of Theorem \ref{t:proj}, then  $\{2,4\}\subseteq\supp(\lambda)\subseteq\{2,4,6\}$. In this case, we cannot conclude that $M(\lambda)$ is a projective object in some subcategory of $\cal G_2$ nor can we use \cite[Theorem 2]{cg:proj} to compute its graded character.

\begin{rem}
It is worth remarking that we will perform most of the proof of \eqref{e:lb} using only the hypothesis $\lambda(h_3)=0$. This provides some evidence that Conjecture \ref{c:mou} holds in complete generality. In particular, we conjecture that \eqref{e:chM} is the graded character of $M(\lambda)$ for all $\lambda\in P^+$ such that $\lambda(h_3)=0$.
\end{rem}

\section{Proofs}\label{s:proof}

\subsection{On characters for type $A_2$}
We now record some lemmas about the characters of certain finite-dimensional $\lie{sl}_3$-modules which will be needed in the proof of \eqref{e:lb}. To simplify some formulas, we introduce the notation of divided powers. If $A$ is an associative algebra, $x\in A$, and $r\in\mathbb Z_{\ge 0}$, set $x^{(r)} = \frac{1}{r!}x^r$.

We will make use of the following result on representations of the 3-dimensional Heisenberg algebra which will also be used in the proof of \eqref{e:ub}. Thus, consider the three-dimensional Heisenberg Lie algebra $\lie H$ spanned by elements $x,y,z$ where $z$ is central and $[x,y]=z$. Part (a) of the following lemma is standard while a proof of part (b) can be found in {\cite[Lemma 1.5]{cm:krg}}.

\begin{lem}\label{l:heis}
Let $r,s\in\mathbb Z_{\ge 0}$, $V$ a representation of $\lie H$, and suppose $0\ne v\in V$ is such that $x^rv=0$.
\begin{enumerate}
\item The following identity holds in $U(\lie H)$: $x^{(r)}y^{(s)} = \sum_{k=0}^{\min\{r,s\}} z^{(k)}y^{(s-k)}x^{(r-k)}$.
\item For all $k\in\mathbb Z_{\ge 0}$,  the element $y^sz^kv$ is in the span of elements of the form $x^ay^bz^cv$ with  $0\le c<r, a+c=k$, and $b+c=k+s$. Moreover, if $xv=0$, then $y^szv = \frac{1}{s+1}\ xy^{s+1}v$.\hfill\qedsymbol
\end{enumerate}
\end{lem}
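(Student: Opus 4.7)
The plan is to treat (a) first by an elementary induction on $r$, then derive both claims of (b) from it. For (a), the base $r=0$ gives $x^{(0)}y^{(s)} = y^{(s)}$, matching the only surviving summand on the right. The inductive step reduces to the auxiliary commutation $x\,y^{(s)} = y^{(s)}\,x + y^{(s-1)}\,z$, itself a simple induction on $s$ using $[x,y]=z$ and centrality of $z$. Writing $x^{(r)}y^{(s)} = \tfrac{1}{r}\,x\cdot x^{(r-1)}y^{(s)}$, applying the inductive hypothesis together with this commutation, and re-collecting divided powers produces the stated identity. The ``moreover'' clause of (b) is then the $r=1$ case of (a) with $s$ replaced by $s+1$: one has $x\,y^{s+1} = y^{s+1}\,x + (s+1)\,y^{s}\,z$, and evaluating on $v$ with $xv=0$ gives $xy^{s+1}v = (s+1)\,y^{s}zv$, which is the desired formula.

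For the main statement of (b), I would induct on $k$. The base case $k<r$ is immediate since $y^{s}z^{k}v$ itself has the stated form with $a=0$, $b=k+s$, $c=k$. For $k\ge r$, specialize (a) with $(r,s)\mapsto (k,k+s)$ and evaluate on $v$. Every summand with $k-j\ge r$ vanishes because $x^{(k-j)}v = 0$, so only $j\ge k-r+1$ survives:
\begin{equation*}
x^{(k)}y^{(k+s)}v \;=\; z^{(k)}y^{(s)}v \;+\; \sum_{j=k-r+1}^{k-1} z^{(j)}y^{(k+s-j)}x^{(k-j)}v.
\end{equation*}
Centrality of $z$ gives $z^{(k)}y^{(s)}v = (k!\,s!)^{-1}\,y^{s}z^{k}v$; solving for $y^{s}z^{k}v$ then presents it, up to a nonzero scalar, as a combination of $x^{k}y^{k+s}v$ (already of the required form, with $c=0$) and the residual terms $z^{j}y^{k+s-j}x^{k-j}v$ for $j=k-r+1,\ldots,k-1$.

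The main obstacle is reducing each residual term to the required form. My plan is to set $v' = x^{k-j}v$ and observe $x^{r'}v' = 0$ for $r' := r-(k-j)\in\{1,\ldots,r-1\}$. The inductive hypothesis applies to $v'$ with parameters $(k,s,r)\mapsto (j,k+s-j,r')$ because the new $k$-parameter $j$ is strictly smaller than $k$; it yields
\begin{equation*}
z^{j}y^{k+s-j}x^{k-j}v \;=\; y^{k+s-j}z^{j}v' \;=\; \sum_{c'=0}^{r'-1}\beta_{c'}\,x^{j-c'}\,y^{k+s-c'}\,z^{c'}\,v'
\end{equation*}
for some scalars $\beta_{c'}$. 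Re-substituting $v'=x^{k-j}v$, invoking the swap identity $y^{(p)}x^{(q)} = \sum_{\ell}(-1)^{\ell} z^{(\ell)}x^{(q-\ell)}y^{(p-\ell)}$ (a consequence of (a) via the involution $x\leftrightarrow y$, $z\mapsto -z$ of $\lie H$) to commute the trailing $x^{k-j}$ through $y^{k+s-c'}$, and using centrality of $z$, each contribution becomes a sum of terms $x^{a}y^{b}z^{c}v$ of weight $(k,k+s)$ with $c=c'+\ell$, $\ell\in\{0,\ldots,k-j\}$. The crucial numerical check is $c \le (r'-1)+(k-j) = r-1$, so every resulting term satisfies $c<r$. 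This closes the induction and establishes the lemma.
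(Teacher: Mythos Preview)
Your argument is correct. The paper itself does not supply a proof of this lemma: it merely remarks that part (a) is standard and refers the reader to \cite[Lemma~1.5]{cm:krg} for part (b). So there is no ``paper's proof'' to compare against; your write-up is a legitimate self-contained substitute.

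A couple of small remarks. First, in your base case for (b) you write ``$a=0$, $b=k+s$, $c=k$''; the correct values are $a=0$, $b=s$, $c=k$ (so that $b+c=k+s$). This is only a slip of the pen and does not affect the argument. Second, it is worth saying explicitly that your induction on $k$ is \emph{uniform} over all choices of $(V,v,r,s)$: the inductive hypothesis you invoke is ``for every $k'<k$ and every triple $(v',r',s')$ with $x^{r'}v'=0$, the conclusion holds for $y^{s'}z^{k'}v'$''. You use this when you pass from $v$ to $v'=x^{k-j}v$ with the smaller exponent $r'=r-(k-j)$. This is exactly what you do, but stating it up front avoids any ambiguity. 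With that understood, the commutation via the involution $x\leftrightarrow y,\ z\mapsto -z$ and the bound $c=c'+\ell\le (r'-1)+(k-j)=r-1$ cleanly close the induction.
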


Recall that $U(\lie n^-)$ is $Q^+$-graded and denoted by $U(\lie n^-)_\eta$ the piece of degree $\eta$. For the remainder of this subsection we assume $\lie g=\lie{sl}_3$ and $I=\{1,2\}$. Observe that the map $\lie n^-\to \lie H$ given by $x^-_i\mapsto x$ and $x^-_j\mapsto y$, where $i,j\in I$ are distinct, is an isomorphism.
\begin{lem}\label{l:basisUheis}
Let $i,j\in I,i\ne j$, and $\eta = k_i\alpha_i+k_j\alpha_j\in Q^+$. Then $\{(x_i^-)^{(r)}(x_j^-)^{(k_j)}(x_i^-)^{(k_i-r)}: 0\le r\le\min\{k_i,k_j\}\}$ is a basis of $U(\lie n^-)_\eta$.
\end{lem}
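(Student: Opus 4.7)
The plan is to exploit the explicit identification $\lie n^- \cong \lie H$ mentioned just before the lemma, where $x = x_i^-$, $y = x_j^-$, and $z := [x_i^-, x_j^-]$ is a nonzero scalar multiple of the root vector for $\alpha_i + \alpha_j$ (note that $\lie n^-$ is indeed 3-dimensional and Heisenberg because $2\alpha_i + \alpha_j$ and $\alpha_i + 2\alpha_j$ are not roots of $A_2$, forcing $z$ to be central). Under this identification, the $Q^+$-grading on $U(\lie n^-)$ corresponds to declaring $\deg x = \alpha_i$, $\deg y = \alpha_j$, $\deg z = \alpha_i + \alpha_j$, and the PBW theorem with the ordering $y < z < x$ yields the basis
\[
\mathcal B_\eta = \{(x_j^-)^{(b)}\, z^{(c)}\, (x_i^-)^{(a)} : a+c = k_i,\ b+c = k_j\}
\]
of $U(\lie n^-)_\eta$, parametrized by $c \in \{0,1,\dots, m\}$ where $m := \min\{k_i,k_j\}$. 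Hence $\dim U(\lie n^-)_\eta = m+1$, which matches the cardinality of the set in the lemma.

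It therefore suffices to show linear independence, which I would do by computing the transition matrix from the given set to $\mathcal B_\eta$. Applying Lemma \ref{l:heis}(a) with $s = k_j$ (and since $r \le m \le k_j$ forces $\min\{r,k_j\} = r$) gives
\[
(x_i^-)^{(r)}(x_j^-)^{(k_j)}(x_i^-)^{(k_i - r)} = \sum_{k=0}^{r} z^{(k)} (x_j^-)^{(k_j - k)} (x_i^-)^{(r-k)} (x_i^-)^{(k_i - r)}.
\]
Using the identity $(x_i^-)^{(a)}(x_i^-)^{(b)} = \binom{a+b}{a}(x_i^-)^{(a+b)}$ in the rightmost factor and then moving $(x_j^-)^{(k_j-k)}$ past $z^{(k)}$ (which is permitted since $z$ is central), this equals
\[
\sum_{k=0}^{r} \binom{k_i - k}{\,r-k\,}\, (x_j^-)^{(k_j-k)} z^{(k)} (x_i^-)^{(k_i - k)}.
\]
Thus the expansion of $(x_i^-)^{(r)}(x_j^-)^{(k_j)}(x_i^-)^{(k_i-r)}$ in the basis $\mathcal B_\eta$ indexed by $k = c$ has support $\{0,1,\dots,r\}$ with leading coefficient $\binom{k_i-r}{0} = 1$.

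Ordering both sets by $r$ and $c = k$ running from $0$ to $m$, the transition matrix $M_{r,k} = \binom{k_i-k}{r-k}$ (with $M_{r,k} = 0$ for $k > r$) is therefore lower triangular with $1$'s on the diagonal, hence invertible. This proves that the given set is a basis. There is no real obstacle here: once one recognizes that $\lie n^- \cong \lie H$ is Heisenberg in the $A_2$ case, the only substantive step is the triangularity of the transition matrix, which is immediate from Lemma \ref{l:heis}(a) together with the binomial identity for divided powers of a single element.
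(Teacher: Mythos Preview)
Your proof is correct and follows essentially the same route as the paper: identify $\lie n^-$ with the Heisenberg algebra, apply Lemma~\ref{l:heis}(a) to expand $(x_i^-)^{(r)}(x_j^-)^{(k_j)}(x_i^-)^{(k_i-r)}$ in a PBW basis, and read off linear independence from the resulting triangular transition matrix with unit diagonal. The paper's version is terser (it simply says ``one now easily uses the PBW theorem'' after obtaining the same expansion $\sum_k \binom{k_i-k}{r-k} z^{(k)}y^{(k_j-k)}x^{(k_i-k)}$), but the argument is identical.
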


\begin{proof}
Since $\dim(U(\lie n^-)_\eta)=p(\eta)=\min\{k_i,k_j\}+1$, it suffices to show that this set is linearly independent. Let us write $x=x_i^-, y=x_j^-$, and $z=[x,y]$. Then, by  part (a) of Lemma \ref{l:heis} we have
$$x^{(r)}y^{(k_j)}x^{(k_i-r)} = \sum_{k=0}^{\min\{r,k_j\}} \binom{k_i-k}{r-k} z^{(k)}y^{(k_j-k)}x^{(k_i-k)}.$$
One now easily uses the PBW theorem to prove that these vectors, with $0\le r\le\min\{k_i,k_j\}$, are linearly independent.
\end{proof}

\begin{lem}\label{l:sl3}
Let $\lambda=m_1\omega_1+m_2\omega_2\in P^+, 0\le k_1\le m_1, 0\le k_2\le m_2$, and $\mu=\lambda-k_1\alpha_1-k_2\alpha_2$. Then, $\dim(V(\lambda)_\mu)=\min\{k_1,k_2\}+1$.
\end{lem}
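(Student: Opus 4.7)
The plan is as follows. The upper bound $\dim V(\lambda)_\mu \le \min\{k_1,k_2\}+1$ is immediate from Lemma \ref{l:basisUheis}: since $V(\lambda)$ is generated by a highest-weight vector $v_\lambda$, we have $V(\lambda)_\mu = U(\lie n^-)_\eta v_\lambda$ where $\eta = k_1\alpha_1 + k_2\alpha_2$, and Lemma \ref{l:basisUheis} shows that $\dim U(\lie n^-)_\eta = \min\{k_1,k_2\}+1$. This reduces the problem to producing the matching lower bound.

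For the lower bound I would invoke Kostant's multiplicity formula
\begin{equation*}
\dim V(\lambda)_\mu \ = \ \sum_{w \in \cal W}(-1)^{\ell(w)}\, p\bigl(w(\lambda+\rho) - (\mu+\rho)\bigr),
\end{equation*}
where $p$ denotes the Kostant partition function on $Q^+$ relative to $R^+ = \{\alpha_1,\alpha_2,\alpha_1+\alpha_2\}$. A direct count of ways to write $k_1\alpha_1+k_2\alpha_2 = a\alpha_1 + b\alpha_2 + c(\alpha_1+\alpha_2)$ with $a+c=k_1$ and $b+c=k_2$ gives $p(k_1\alpha_1+k_2\alpha_2) = \min\{k_1,k_2\}+1$, so the identity element of $\cal W$ already contributes the desired quantity. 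It then remains to verify that each of the five non-identity elements contributes $0$. Using $s_i\omega_j = \omega_j - \delta_{ij}\alpha_i$, an elementary computation gives, for instance,
\begin{equation*}
s_1(\lambda+\rho) - (\mu+\rho) = (k_1 - m_1 - 1)\alpha_1 + k_2\alpha_2,
\end{equation*}
which is not in $Q^+$ because $k_1 \le m_1$; symmetric reasoning handles $s_2$, and for the longer Weyl elements $s_1 s_2$, $s_2 s_1$, and $w_0 = s_1 s_2 s_1$, an analogous computation produces at least one strictly negative coefficient under the standing assumptions $0 \le k_i \le m_i$.

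The main obstacle is essentially bookkeeping: the Kostant sum contains six terms, and each non-identity term must be individually ruled out by verifying that a specific coefficient becomes negative given the range restrictions on $k_1,k_2$. An alternative, more constructive route would be to exhibit $\min\{k_1,k_2\}+1$ explicit linearly independent weight vectors in $V(\lambda)_\mu$ by realizing $V(\lambda)$ as the Cartan component of $V(m_1\omega_1)\otimes V(m_2\omega_2) \cong S^{m_1}(\mathbb C^3)\otimes S^{m_2}((\mathbb C^3)^*)$ and tracking tensor factors, but the Weyl--Kostant route is the most compact and fits naturally with the PBW-based Lemma \ref{l:basisUheis} already used for the upper bound.
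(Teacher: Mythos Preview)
Your proposal is correct and follows the same route as the paper, which simply records the result as ``straightforward using Kostant's multiplicity formula.'' One minor remark: the upper-bound step via Lemma~\ref{l:basisUheis} is unnecessary, since once you verify that all five non-identity Weyl elements contribute zero to the Kostant sum (exactly as you outline), the formula yields $\dim V(\lambda)_\mu = p(k_1\alpha_1+k_2\alpha_2) = \min\{k_1,k_2\}+1$ on the nose, not merely a lower bound.
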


\begin{proof} Straightforward using Kostant's multiplicity formula (cf. \cite[Proposition 5.3.10]{per}).
\end{proof}

\begin{lem}\label{l:basis}
Let $V$ be a finite-dimensional $\lie g$-module, $l\in\mathbb Z_{\ge 1}$, and $\mu_1,\dots,\mu_l\in P^+$. Assume $\mu_l<\mu_s$ for all $s<l$, write $\eta_s = \mu_s-\mu_l = k_{s,1}\alpha_1+k_{s,2}\alpha_2$, and suppose $k_{s,i}\le \mu_s(h_i), i\in I$. Suppose also that there exists $v_s\in V_{\mu_s}$ such that $V=\sum_{s=1}^l U(\lie n^-)v_s$. Let $i,j\in I$ be distinct. Then, $V\cong \opl_{s=1}^l V(\mu_s)$ iff the vectors $(x_i^-)^{(r)}(x_j^-)^{(k_{s,j})}(x_i^-)^{(k_{s,i}-r)}v_s$ for $s=1,\dots,l$ and $0\le r\le \min\{k_{s,1},k_{s,2}\}$ are linearly independent.
\end{lem}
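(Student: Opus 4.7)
Proof plan for Lemma \ref{l:basis}:

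The key observation is that the proposed spanning set for the weight space $V_{\mu_l}$ has a fixed cardinality determined by the $k_{s,i}$, while the actual dimension of $V_{\mu_l}$ is controlled, via complete reducibility, by which irreducibles actually appear in $V$. My plan is to match these two quantities. First I would verify that each vector $(x_i^-)^{(r)}(x_j^-)^{(k_{s,j})}(x_i^-)^{(k_{s,i}-r)}v_s$ has weight $\mu_s - k_{s,i}\alpha_i - k_{s,j}\alpha_j = \mu_s - \eta_s = \mu_l$, so the whole collection sits inside the single weight space $V_{\mu_l}$. Using the hypothesis $V=\sum_s U(\lie n^-)v_s$ and the fact that $\mu_l < \mu_s$ for $s<l$ (so $U(\lie n^-)_0 v_l = \mathbb{C} v_l$ contributes the single vector coming from $s=l$), we obtain $V_{\mu_l} = \sum_s U(\lie n^-)_{\eta_s}v_s$. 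By Lemma \ref{l:basisUheis}, the set $\{(x_i^-)^{(r)}(x_j^-)^{(k_{s,j})}(x_i^-)^{(k_{s,i}-r)} : 0\le r\le\min\{k_{s,1},k_{s,2}\}\}$ is a basis of $U(\lie n^-)_{\eta_s}$, so in particular $V_{\mu_l}$ is spanned by the proposed vectors, which number $N:=\sum_{s=1}^l(\min\{k_{s,1},k_{s,2}\}+1)$. Hence the vectors are linearly independent if and only if $\dim V_{\mu_l}=N$.

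Next I would decompose $V$ using complete reducibility, writing $V\cong\bigoplus_{\alpha=1}^m V(\nu_\alpha)$, and apply Lemma \ref{l:hwvecs} to the generators $v_s$. This produces distinct indices $s_1,\dots,s_m\in\{1,\dots,l\}$ with $\nu_\alpha = \mu_{s_\alpha}$ and with $v_{s_\alpha}$ projecting non-trivially onto the $\alpha$-th summand. Since $\mu_l\le\mu_{s_\alpha}$ for every $\alpha$ (strict if $s_\alpha\ne l$, trivial if $s_\alpha=l$), and since the hypotheses $k_{s_\alpha,i}\le\mu_{s_\alpha}(h_i)$ place us in the range of validity of Lemma \ref{l:sl3}, that lemma computes
\[
\dim V(\nu_\alpha)_{\mu_l}=\min\{k_{s_\alpha,1},k_{s_\alpha,2}\}+1.
\]
Summing, $\dim V_{\mu_l}=\sum_{\alpha=1}^m(\min\{k_{s_\alpha,1},k_{s_\alpha,2}\}+1)\le N$, with equality if and only if $\{s_1,\dots,s_m\}=\{1,\dots,l\}$, i.e. if and only if $V\cong\bigoplus_{s=1}^l V(\mu_s)$.

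Combining the two paragraphs closes both implications simultaneously: the vectors are linearly independent $\iff \dim V_{\mu_l}=N \iff$ every index $s\in\{1,\dots,l\}$ is hit by some $s_\alpha$ $\iff V\cong\bigoplus_{s=1}^l V(\mu_s)$. The only step that requires any real care is the application of Lemma \ref{l:hwvecs}: one must note that even if the $\mu_s$ with $s<l$ are not pairwise distinct, the lemma still produces an injection $\alpha\mapsto s_\alpha$, so that $m\le l$ and the dimension inequality above is sharp precisely when $m=l$. Everything else is bookkeeping with Lemmas \ref{l:basisUheis} and \ref{l:sl3}.
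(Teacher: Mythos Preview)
Your argument is correct and uses the same ingredients as the paper (Lemmas \ref{l:hwvecs}, \ref{l:sl3}, \ref{l:basisUheis}); the ``if'' direction is essentially identical to the paper's. For the converse, however, you take a slightly different and more economical route: you first observe that the proposed vectors \emph{span} $V_{\mu_l}$ (via Lemma \ref{l:basisUheis} and $V_{\mu_l}=\sum_s U(\lie n^-)_{\eta_s}v_s$), so both implications reduce to the single numerical equivalence $\dim V_{\mu_l}=N$. The paper instead argues the converse by choosing a direct-sum decomposition $V=\bigoplus_s V_s$, arranging via Lemma \ref{l:hwvecs} that $\pi_s(v_s)$ generates $V_s$, exhibiting a basis of each $(V_s)_{\mu_l}$, and then projecting a hypothetical dependence relation onto each $V_t$. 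Your approach avoids having to track what $\pi_t(v_s)$ does for $s\ne t$ in that projection step, at the cost of making the spanning observation explicit; either way the work is the same dimension count.
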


\begin{proof}
By Lemma \ref{l:sl3} we have $\dim(V(\mu_s))_{\mu_l}= \min\{k_{s,1},k_{s,2}\}+1$ and by Lemma \ref{l:hwvecs} there exists $m\le l$ and $s_1,\dots, s_m$ such that $V\cong \opl_{r=1}^m V(\mu_{s_r})$. Hence,
$$\dim(V_{\mu_l}) = \sum_{r=1}^m \dim(V(\mu_{s_r})_{\mu_l})=\sum_{r=1}^m (\min\{k_{s_r,1},k_{s_r,2}\}+1).$$
The if part follows since the cardinality of the set $\{(s,r):s=1,\dots,l, 0\le r\le \min\{k_{s,1},k_{s,2}\}\}$ is $\sum_{s=1}^l (\min\{k_{s,1},k_{s,2}\}+1)$.

Conversely, assume that $V\cong \opl_{s=1}^l V(\mu_s)$ and let $V_s, s=1,\dots,l$, be a submodule of $V$ isomorphic to $V(\mu_s)$ and such that $V=\opl_{s=1}^l V_s$. Let also $\pi_s:V\to V_s$ be the associated projection. By Lemma \ref{l:hwvecs} we can assume $\pi_s(v_s)$ is a highest weight vector of $V_s$. Observe that the set $(x_i^-)^{(r)}(x_j^-)^{(k_{s,j})}(x_i^-)^{(k_{s,i}-r)}\pi_s(v_s)$ with $0\le r\le \min\{k_{s,1},k_{s,2}\}$ is a basis of $(V_s)_{\mu_l}$. Indeed, the set $(x_i^-)^{(r)}(x_j^-)^{(k_{s,j})}(x_i^-)^{(k_{s,i}-r)}$ is a basis for $U(\lie n^-)_{\eta_s}$ by Lemma \ref{l:basisUheis}. In particular, the vectors $(x_i^-)^{(r)}(x_j^-)^{(k_{s,j})}(x_i^-)^{(k_{s,i}-r)}\pi_s(v_s)$ with $0\le r\le \min\{k_{s,1},k_{s,2}\}$ span $(V_s)_{\mu_l}$. Since we already know that $\dim((V_s)_{\mu_l}) = \min\{k_{s,1},k_{s,2}\}+1$, the claim follows. Let $a_{r,s}\in\mathbb C$ be such that
$$\sum_{s=1}^l\sum_{r=0}^{\min\{k_{s,1},k_{s,2}\}} a_{r,s}(x_i^-)^{(r)}(x_j^-)^{(k_{s,j})}(x_i^-)^{(k_{s,i}-r)}v_s=0.$$
Given $1\le t\le l$, we get
\begin{align*}
\pi_t(\sum_{s=1}^l & \sum_{r=0}^{\min\{k_{s,1},k_{s,2}\}} a_{r,s}(x_i^-)^{(r)}(x_j^-)^{(k_{s,j})}(x_i^-)^{(k_{s,i}-r)}v_s) =\\
&\sum_{r=0}^{\min\{k_{t,1},k_{t,2}\}} a_{r,t}(x_i^-)^{(r)}(x_j^-)^{(k_{t,j})}(x_i^-)^{(k_{t,i}-r)}\pi_t(v_t)=0.
\end{align*}
It follows that $a_{r,t}=0$ for all $t=1,\dots,l$ and $0\le r\le \min\{k_{t,1},k_{t,2}\}$.
\end{proof}

\begin{lem}\label{l:sl3heis}
Let $a,b,c,m\in\mathbb Z_{\ge 0}, i,j\in I, j\ne i, \lambda=m\omega_i$, and $v\in V(\lambda)_\lambda\backslash\{0\}$. Then,
$$(x_i^-)^a(x_j^-)^b(x_i^-)^cv\ne 0 \quad\Leftrightarrow\qquad b\le c \qquad\text{and}\qquad a+c\le m.$$
Moreover,
$$(x_i^-)^a(x_j^-)^b(x_i^-)^cv=\left(\prod_{s=1}^a\frac{c+s-b}{c+s}\right)\ (x_j^-)^b(x_i^-)^{a+c}v.$$
\end{lem}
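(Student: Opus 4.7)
The plan is to establish the explicit formula first by induction on $a$, and then to deduce the nonvanishing criterion from it. Write $x = x_i^-$, $y = x_j^-$, and $z = [x,y]$. Since $\lie g = \lie{sl}_3$, neither $\pm 2\alpha_i \pm \alpha_j$ nor $\pm\alpha_i \pm 2\alpha_j$ is a root, so $[x,z] = [y,z] = 0$ and the triple $\{x,y,z\}$ spans a Heisenberg subalgebra of $\lie g$. Since $\lambda = m\omega_i$ forces $\lambda(h_j) = 0$, we have $yv = 0$ and hence $zv = xyv - yxv = -yxv$.

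The core technical input is the auxiliary identity $x^n y x v = \frac{1}{n+1}\, y x^{n+1} v$ for $n \ge 0$, which is a companion to Lemma \ref{l:heis}(b) with the roles of $x$ and $y$ interchanged. It follows from the commutation formula $[x^n, y] = n x^{n-1} z$ (valid since $[x,z] = 0$) together with $zv = -yxv$: a short manipulation yields $(n+1)\, x^n y x v = y x^{n+1} v$. Given this, the inductive step for the main formula is immediate: using $[x, y^b] = b y^{b-1} z$ and $[z,x] = 0$,
\[
x \cdot \bigl(y^b x^{a+c} v\bigr) = y^b x^{a+c+1} v + b\, y^{b-1} x^{a+c} z v = \frac{a+c+1-b}{a+c+1}\, y^b x^{a+c+1} v,
\]
and the displayed factor is exactly $c_{a+1,b,c}/c_{a,b,c}$, where $c_{a,b,c} := \prod_{s=1}^a \frac{c+s-b}{c+s}$.

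For the nonvanishing criterion, $c_{a,b,c} \ne 0$ iff $b \notin \{c+1, \ldots, c+a\}$. It remains to determine when $y^b x^{a+c} v \ne 0$. Since $[x_j^+, x_i^-] = 0$ in $\lie{sl}_3$ (because $\alpha_j - \alpha_i$ is not a root) and $h_j v = 0$, for $n \le m$ the vector $x^n v$ is a nonzero highest-weight vector of weight $n$ for the $\lie{sl}_2$-triple $\{x_j^{\pm}, h_j\}$; standard $\lie{sl}_2$-theory then gives $y^b x^n v \ne 0$ iff $b \le n \le m$. Intersecting this with $b \notin \{c+1, \ldots, c+a\}$ rules out the range $c < b \le c+a$ and forces $b \le c$, yielding the stated criterion. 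The only mild obstacle is the sign bookkeeping around $zv = -yxv$ in the auxiliary identity; once that is in place, both halves of the lemma follow immediately.
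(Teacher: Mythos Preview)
Your proof is correct and follows essentially the same route as the paper's. The only difference is notational: the paper sets $x=x_j^-,\ y=x_i^-,\ z=[x,y]$, so that $xv=0$ and the last clause of Lemma~\ref{l:heis}(b) can be quoted verbatim; you chose the opposite labeling, so that $yv=0$, and therefore had to rederive the mirror image of that clause (your auxiliary identity $x^n yxv=\tfrac{1}{n+1}yx^{n+1}v$). Beyond this relabeling the induction step, the $\lie{sl}_2$ analysis of $y^bx^nv$, and the deduction of the nonvanishing criterion are identical to the paper's.
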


\begin{proof}
From the $\lie{sl}_2$ representation theory we have $(x_i^-)^cv\ne 0$ iff $c\le m$.
Since $x_j^+(x_i^-)^cv= 0$ and $h_j(x_i^-)^cv=c(x_i^-)^cv$, it follows from the $\lie{sl}_2$ representation theory once more that $(x_j^-)^b(x_i^-)^cv\ne 0$ iff $b\le c$ (and $c\le m$). Notice that this together with the second statement implies the first statement. We prove the second statement by induction on $a\ge 0$. The case $a=0$ is obvious. The induction step will however depend on the knowledge of the case $a=1$. For convenience set $x=x_j^-, y=x_i^-$, and $z = [x,y]$. Using the well-known commutation relation in $U(\lie n^-)$
\begin{equation*}
yx^b = x^by -bx^{b-1}z
\end{equation*}
we get
\begin{equation*}
yx^by^cv = x^by^{c+1}v - bx^{b-1}y^czv = x^by^{c+1}v - \frac{b}{c+1}\ x^by^{c+1}v = \frac{c+1-b}{c+1}\ x^by^{c+1}v
\end{equation*}
where, in the second equality, we used that $xv=0$ and the last statement of Lemma \ref{l:heis}. The case $a=1$ follows. Then, for $a>1$, using the induction hypothesis we get
\begin{equation*}
y^ax^by^cv = y(y^{a-1}x^by^cv) = \left(\prod_{s=1}^{a-1}\frac{c+s-b}{c+s}\right)yx^by^{c+a-1}v.
\end{equation*}
Since, by the case $a=1$, we have
\begin{equation*}
yx^by^{c+a-1}v = \left(\frac{c+a-b}{c+a}\right) x^by^{a+c}v
\end{equation*}
the second statement follows.
\end{proof}

\begin{rem}
Notice that if $b\le c$ the number $\prod_{s=1}^a\frac{c+s-b}{c+s}$ is a positive rational number.
\end{rem}

\subsection{Root data}

Henceforth we assume $\lie g$ is of type $E_6$, set $\lambda=\sum_{i\in I} m_i\omega_i\in P^+$, and assume $\gb\lambda\in\cal P_\mathbb A^\times$ is such that $V_q(\gb\lambda)$ is a minimal affinization of $V_q(\lambda)$. We will need the expression of every positive root in terms of the simple roots and of some of them in terms of the fundamental weights. These expressions are given by Tables 1 and 2 below, respectively.

\vspace{5pt}
\centerline{\bf Table 1}
\begin{multicols}{2}
\begin{itemize}
\item[] $\beta_1=\alpha_1+\alpha_2$ \vspace{0pt}
\item[] $\beta_2=\alpha_4+\alpha_5$  \vspace{0pt}
\item[] $\beta_3=\alpha_2+\alpha_3$ \vspace{0pt}
\item[] $\beta_4=\alpha_3+\alpha_4$ \vspace{0pt}
\item[] $\beta_5=\alpha_3+\alpha_6$  \vspace{0pt}
\item[] $\beta_6=\alpha_1+\alpha_2+\alpha_3$ \vspace{0pt}
\item[] $\beta_7=\alpha_3+\alpha_4+\alpha_5$  \vspace{0pt}
\item[] $\beta_8=\alpha_2+\alpha_3+\alpha_6$  \vspace{0pt}
\item[] $\beta_9=\alpha_3+\alpha_4+\alpha_6$  \vspace{0pt}
\item[] $\beta_{10}=\alpha_2+\alpha_3+\alpha_4$\vspace{0pt}
\item[] $\beta_{11}=\alpha_1+\alpha_2+\alpha_3+\alpha_6$ \vspace{0pt}
\item[] $\beta_{12}=\alpha_3+\alpha_4+\alpha_5+\alpha_6$ \vspace{0pt}
\item[] $\beta_{13}=\alpha_2+\alpha_3+\alpha_4+\alpha_6$ \vspace{0pt}
\item[] $\beta_{14}=\alpha_1+\alpha_2+\alpha_3+\alpha_4$ \vspace{0pt}
\item[] $\beta_{15}=\alpha_2+\alpha_3+\alpha_4+\alpha_5$ \vspace{0pt}
\item[] $\beta_{16}=\alpha_1+\alpha_2+\alpha_3+\alpha_4+\alpha_5$ \vspace{0pt}
\item[] $\beta_{17}=\alpha_1+\alpha_2+\alpha_3+\alpha_4+\alpha_6$ \vspace{0pt}
\item[] $\beta_{18}=\alpha_2+\alpha_3+\alpha_4+\alpha_5+\alpha_6$ \vspace{0pt}
\item[] $\beta_{19}=\alpha_1+\alpha_2+\alpha_3+\alpha_4+\alpha_5+\alpha_6$  \vspace{0pt}
\item[] $\beta_{20}=\alpha_2+2\alpha_3+\alpha_4+\alpha_6$ \vspace{0pt}
\item[] $\beta_{21}=\alpha_1+\alpha_2+2\alpha_3+\alpha_4+\alpha_6$ \vspace{0pt}
\item[] $\beta_{22}=\alpha_2+2\alpha_3+\alpha_4+\alpha_5+\alpha_6$ \vspace{0pt}
\item[] $\beta_{23}=\alpha_1+\alpha_2+2\alpha_3+\alpha_4+\alpha_5+\alpha_6$  \vspace{0pt}
\item[] $\beta_{24}=\alpha_1+2\alpha_2+2\alpha_3+\alpha_4+\alpha_6$ \vspace{0pt}
\item[] $\beta_{25}=\alpha_2+2\alpha_3+2\alpha_4+\alpha_5+\alpha_6$ \vspace{0pt}
\item[] $\beta_{26}=\alpha_1+2\alpha_2+2\alpha_3+\alpha_4+\alpha_5+\alpha_6$ \vspace{0pt}
\item[] $\beta_{27}=\alpha_1+\alpha_2+2\alpha_3+2\alpha_4+\alpha_5+\alpha_6$ \vspace{0pt}
\item[] $\beta_{28}=\alpha_1+2\alpha_2+2\alpha_3+2\alpha_4+\alpha_5+\alpha_6$ \vspace{0pt}
\item[] $\beta_{29}=\alpha_1+2\alpha_2+3\alpha_3+2\alpha_4+\alpha_5+\alpha_6$ \vspace{0pt}
\item[] $\beta_{30}=\alpha_1+2\alpha_2+3\alpha_3+2\alpha_4+\alpha_5+2\alpha_6$
\end{itemize}
\end{multicols}

\vspace{5pt}

\centerline{\bf Table 2}
\begin{multicols}{2}
\begin{itemize}
\item[] $\alpha_1=2\omega_1-\omega_2$ \vspace{0pt}
\item[] $\alpha_2=2\omega_2-\omega_1-\omega_3$ \vspace{0pt}
\item[] $\alpha_3=2\omega_3-\omega_2-\omega_4-\omega_6$ \vspace{0pt}
\item[] $\alpha_4=2\omega_4-\omega_3-\omega_5$ \vspace{0pt}
\item[] $\alpha_5=2\omega_5-\omega_4$ \vspace{0pt}
\item[] $\alpha_6=2\omega_6-\omega_3$ \vspace{0pt}
\item[] $\beta_{23}=\omega_1-\omega_2+\omega_3-\omega_4+\omega_5$ \vspace{0pt}
\item[] $\beta_{24}=\omega_2-\omega_5$ \vspace{0pt}
\item[] $\beta_{25}=\omega_4-\omega_1$ \vspace{0pt}
\item[] $\beta_{26}=\omega_2-\omega_4+\omega_5$ \vspace{0pt}
\item[] $\beta_{27} = \omega_1-\omega_2+\omega_4$
\item[] $\beta_{28}=\omega_2-\omega_3+\omega_4$ \vspace{0pt}
\item[] $\beta_{29}=\omega_3-\omega_6$ \vspace{0pt}
\item[] $\beta_{30}=\omega_6$
\end{itemize}
\end{multicols}

\subsection{A smaller set of relations for $M(\lambda)$}\label{ss:lessrel}
In order to prove Proposition \ref{p:LqM}, we need a version of \cite[Proposition 4.6]{mou:reslim}.

\begin{prop}\label{p:lessrel}
Suppose that either $m_3\ne 0$ or $\overline{\supp}(\lambda)$ is of type $A$. Then, $M(\lambda)$ is isomorphic to the $\lie g[t]$-module $N(\lambda)$ generated by a vector $v$ satisfying
\begin{equation*}
h_iv\ =\ m_iv \qquad\text{and}\qquad \lie n^+[t]v\ =\ \lie h\otimes t\mathbb C[t]v\ =\ (x_{\alpha_i}^-)^{m_i+1}v\ =\ x_{\alpha,1}^-v=0
\end{equation*}
for all $\alpha\in R^+_1 := \{\alpha\in R^+: \alpha=\sum_{i\in I} n_i\alpha_i \text{ with } n_i\le 1 \text{ for all } i\in I\}=R^+\backslash\{\beta_j:j\ge 20\}$.
\end{prop}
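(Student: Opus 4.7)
The plan is to prove the isomorphism $M(\lambda) \cong N(\lambda)$ by exhibiting mutually inverse surjections. The surjection $N(\lambda) \twoheadrightarrow M(\lambda)$ is the easier direction: it suffices to check that $R^+_1 \subseteq R(\lambda,1)$ under the hypotheses of the proposition, so that every defining relation of $N(\lambda)$ is already a relation in $M(\lambda)$. This follows from an inspection of $R(i, m_i, 1)$ for each $i \in \supp(\lambda)$, exactly as in the analysis used in Subsection \ref{ss:ub} to characterize $R(\lambda,r)$. The heart of the argument is the converse direction, namely verifying that all the remaining relations
\begin{equation*}
x^-_{\alpha,r}\, v = 0, \qquad \alpha \in R(\lambda,r),\ r \ge 1,
\end{equation*}
already hold in $N(\lambda)$.

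The first ingredient is a $t$-raising trick. For any $\alpha \in R^+_1$ and $r \ge 2$, choose $h \in \lie h$ with $\alpha(h) \ne 0$. The identity
\begin{equation*}
[h \otimes t^{r-1},\, x^-_{\alpha,1}] \;=\; -\alpha(h)\, x^-_{\alpha,r}
\end{equation*}
in $U(\lie g[t])$, combined with $(\lie h \otimes t\mathbb C[t])v = 0$ and the defining relation $x^-_{\alpha,1} v = 0$, forces $x^-_{\alpha,r} v = 0$. This disposes of all relations with $\alpha \in R^+_1$ and arbitrary $r$. Note that exactly the same computation would also let us conclude $x^-_{\beta,r}v=0$ for $r\ge 2$ once we know $x^-_{\beta,1}v=0$, so the remaining problem reduces to the case $r=1$.

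The harder task is to derive the relations for the eleven ``fat'' roots $\beta_{20}, \ldots, \beta_{30}$, each of which has a coefficient $\ge 2$ in front of the trivalent simple root $\alpha_3$ (and in some cases also in front of $\alpha_2$). For such a root $\beta$, one would decompose $\beta = \gamma + \delta$ with $\gamma \in R^+_1$, so that $x^-_{\gamma,s} v = 0$ for all $s \ge 1$ by the first step. A bare commutator $[x^-_{\gamma,s}, x^-_\delta] v$ only produces a tautology relating $x^-_{\beta,s} v$ to itself, so the derivation must additionally invoke the Serre relations $(\mathrm{ad}\, x^-_{\alpha_i})^2 x^-_{\alpha_j} = 0$ for $c_{ij}=-1$, together with the highest-weight relations $(x^-_{\alpha_i})^{m_i+1} v = 0$. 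In many instances the computation can be localized to an admissible subdiagram (as in Lemma \ref{l:dsa}), reducing the problem to a rank-two verification inside some $\lie g_J$.

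The main obstacle will be this second step: choosing, for each $\beta_j$ with $20 \le j \le 30$, a decomposition and a sequence of relations that escape the tautology trap. This calls for a somewhat intricate case analysis, modeled closely on the proof of \cite[Proposition 4.6]{mou:reslim}. The dichotomy in the hypothesis (either $m_3 \ne 0$ or $\overline\supp(\lambda)$ of type $A$) is precisely what is needed for a uniform treatment: in the type $A$ case many of the $\beta_j$'s automatically drop out of $R(\lambda,r)$ because the relations forced by $m_i = 0$ for $i \notin \overline\supp(\lambda)$ trivialize them via the Serre cascade, whereas when $m_3 \ne 0$ the double-$\alpha_3$ structure of the $\beta_j$'s fully engages the Serre relations at the trivalent node.
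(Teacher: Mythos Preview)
Your outline is correct through the easy direction and the $t$-raising step, but the treatment of the roots $\beta_j$, $j\ge 20$, has a real gap. First, the reduction to $r=1$ fails when $m_3\ne 0$: in that case $\beta_j\notin R(\lambda,1)$ for $j\ge 20$, and one needs only $x^-_{\beta_j,2}v=0$ (for $20\le j\le 28$) or $x^-_{\beta_j,3}v=0$ (for $j=29,30$); the degree-$1$ relation is neither available nor required. Second, and more importantly, the commutator approach you dismiss as tautological is exactly what works, once you split the $t$-degree across \emph{both} factors. For example,
\[
x^-_{\beta_{20},2}=[x^-_{\alpha_3,1},x^-_{\beta_{13},1}],
\]
with $\alpha_3,\beta_{13}\in R^+_1$, so both factors annihilate $v$ by the defining relations of $N(\lambda)$ and the bracket kills $v$ directly. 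The paper handles all of $\beta_{20},\dots,\beta_{28}$ this way and then $\beta_{29},\beta_{30}$ by bracketing a degree-$1$ element of $R^+_1$ against the degree-$2$ relations just obtained. When $m_3=0$, the same idea applies with one factor in $t$-degree $0$: since $U(\lie g)v\cong V(\lambda)$ one has $x^-_\alpha v=0$ for every $\alpha\in R(\lambda,0)$, and the type-$A$ hypothesis supplies enough such $\alpha$ (e.g.\ $\alpha_3$, $\beta_4$, or $\beta_{18}$, depending on $\supp(\lambda)$) to write each required $x^-_{\beta_j,r_{\beta_j}}$ as a bracket of already-vanishing elements.

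The Serre relations, the highest-weight conditions $(x^-_{\alpha_i})^{m_i+1}v=0$, and any rank-two localization via Lemma~\ref{l:dsa} (which concerns irreducible $U_q(\tlie g)$-modules, not $N(\lambda)$) play no role; the argument is entirely a matter of choosing the right bracket decompositions, and your proposal does not supply them.
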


\begin{proof}
It follows from Lemma \ref{l:coord} that $R^+_1\subseteq R(\lambda,1)$ and, hence, $M(\lambda)$ is a quotient of $N(\lambda)$. Let us now show that, under the hypothesis assumed on $\lambda$, we have an epimorphism in the opposite direction. Thus, we need to show that $x_{\alpha,r}^-v=0$ for all $\alpha\in R(\lambda,r)$. In fact, after  \eqref{e:Rgrows}, given $\alpha\in R^+$, it suffices to show that
\begin{equation}\label{e:lessrel}
x_{\alpha,r_{\alpha}}^-v=0 \qquad\text{where}\qquad r_\alpha=\min\{r:\alpha\in R(\lambda,r)\}.
\end{equation}
If $r_\alpha=0$ this follows immediately  from the defining relations of $N(\lambda)$ since they clearly imply that $U(\lie g)v\cong V(\lambda)$. If $\alpha\in R^+_1$ equation \eqref{e:lessrel} is again immediate from the defining relations of $N(\lambda)$. Therefore, we need to prove \eqref{e:lessrel} for $\alpha\in R^+\backslash R^+_1$ only. Notice also that Lemma \ref{l:coord} implies that $r_\alpha\le 3$ for all $\alpha\in R^+$.

Assume first that $m_3\ne 0$. It then follows from \eqref{e:coord} that $R^+_1=R(\lambda,1)$ and \eqref{e:lessrel} is immediate for all $\alpha$ such that $r_\alpha=1$. Equation \eqref{e:coord} also implies that $R(\lambda,2) = \{\beta_j:20\le j\le 28\}$ and $R(\lambda,3)=\{\beta_{29},\beta_{30}\}$. Therefore, we are left to show that $x_{\beta_j,2}^-v=0$ for all $20\le j\le 28$ and $x_{\beta_j,3}^-v=0$ for all $29\le j\le 30$. This follows from the following commutation relations together with \eqref{e:lessrel} for $\alpha$ such that $r_\alpha\le 1$:
\begin{alignat*}{5}
&x^-_{\beta_{20},2}= [x^-_{\alpha_3,1},x^-_{\beta_{13},1}], \qquad\qquad&& x^-_{\beta_{21},2}=[x^-_{\alpha_3,1},x^-_{\beta_{17},1}], \qquad\qquad&& x^-_{\beta_{22},2}= [x^-_{\alpha_3,1},x^-_{\beta_{18},1}],\\
&x^-_{\beta_{23},2}=[x^-_{\alpha_3,1},x^-_{\beta_{19},1}], \qquad\qquad&&  x^-_{\beta_{24},2}=[x^-_{\beta_3,1},x^-_{\beta_{17},1}], \qquad\qquad&& x^-_{\beta_{25},2}=[x^-_{\beta_4,1},x^-_{\beta_{18},1}],\\
&x^-_{\beta_{26},2}=[x^-_{\beta_3,1},x^-_{\beta_{19},1}], \qquad\qquad&&  x^-_{\beta_{27},2}=[x^-_{\beta_4,1},x^-_{\beta_{19},1}], \qquad\qquad&& x^-_{\beta_{28},2}=[x^-_{\beta_{10},1},x^-_{\beta_{19},1}],\\
&x^-_{\beta_{29},3}=[x^-_{\alpha_3,1},x^-_{\beta_{28},2}], \qquad\qquad&& x^-_{\beta_{30},3}=[x^-_{\beta_{18},1},x^-_{\beta_{21},2}].
\end{alignat*}

Now, assume $m_3=0$. In this case, $r_\alpha\le 2$ for all $\alpha\in R^+$. We consider separately the cases $\supp(\lambda)\subseteq\{1,2,4,5\}$ and $\supp(\lambda)\subseteq\{1,2,6\}$ (the case $\supp(\lambda)\subseteq\{4,5,6\}$ follows from the latter by the symmetry of the Dynkin diagram). Thus, assume $\supp(\lambda)\subseteq\{1,2,6\}$ and consider the following relations
\begin{alignat*}{5}
&x^-_{\beta_{20},1}= [x^-_3,x^-_{\beta_{13},1}], \qquad\qquad&& x^-_{\beta_{21},1}=[x^-_3,x^-_{\beta_{17},1}], \qquad\qquad&& x^-_{\beta_{22},1}= [x^-_3,x^-_{\beta_{18},1}],\\
&x^-_{\beta_{23},1}=[x^-_3,x^-_{\beta_{19},1}], \qquad\qquad&& x^-_{\beta_{25},1}=[x^-_{\beta_4},x^-_{\beta_{18},1}],\qquad\qquad&&  x^-_{\beta_{27},1}=[x^-_{\beta_4},x^-_{\beta_{19},1}].
\end{alignat*}
Since $\alpha_3,\beta_4\in R(\lambda,0)$ in this case, it follows that $x_{\beta_j,1}^-v=0$ for all $20\le j\le 27, j\ne 24,26$. If $m_2=0$, we need to show that $x_{\beta_j,1}^-v=0$ for $j\in\{24,26,28,29\}$ and $x_{\beta_{30},r}^-v=0$ where $r=1$ if $m_6=0$ and $r=2$ otherwise. Since, in this case, $\beta_3,\beta_{10}\in R(\lambda,0)$, the former follows from the following relations
\begin{equation*}
x^-_{\beta_{24},1}=[x^-_{\beta_3},x^-_{\beta_{17},1}], \qquad x^-_{\beta_{26},1}=[x^-_{\beta_3},x^-_{\beta_{19},1}], \qquad x^-_{\beta_{28},1}=[x^-_{\beta_{10}},x^-_{\beta_{19},1}],\qquad x^-_{\beta_{29},1}=[x^-_3,x^-_{\beta_{28},1}].
\end{equation*}
The latter follows from the relations
\begin{equation*}
x^-_{\beta_{30},1}=[x^-_{\beta_{18}},x^-_{\beta_{21},1}] \qquad\text{and}\qquad x^-_{\beta_{30},2}=[x^-_{\beta_{18},1},x^-_{\beta_{28},1}]
\end{equation*}
using that $\beta_{18}\in R(\lambda,0)$ if $m_6=0$.

Assume $\supp(\lambda)\subseteq\{1,2,4,5\}$. As in the previous case, one sees that $x_{\beta_j,1}^-v=0$ for all $20\le j\le 23$. If both $m_2$ and $m_4$ are nonzero, we are left to show that $x_{\beta_j,2}^-v=0$ for all $24\le j\le 30$. For $24\le j\le 28$, this is done as in the case $m_3\ne 0$ while for $j=29,30$ this then follows from the relations
\begin{equation*}
 x^-_{\beta_{29},2}=[x^-_3,x^-_{\beta_{28},2}] \qquad\text{and}\qquad x^-_{\beta_{30},2}=[x^-_{\beta_{18},1},x^-_{\beta_{21},1}].
\end{equation*}
If $m_2=0$ and $m_4\ne 0$, we need to show that $x_{\beta_{24},1}^-v=x_{\beta_{26},1}^-v=0$. This is done as in the case $\supp(\lambda)\subseteq\{1,2,6\}$. The case $m_2\ne0$ and $m_4=0$ is treated similarly. In particular, if $m_4=0$ we have $x_{\beta_{25},1}^-v=x_{\beta_{27},1}^-v=0$. Finally, if $m_2=m_4= 0$, we need to prove in addition that $x_{\beta_j,1}^-v=0$ for $j=28,29,30$. This is done as in the case $\supp(\lambda)\subseteq\{1,6\}$.
\end{proof}

\subsection{Quantized relations}\label{ss:LqM} The goal of this subsection is to prove Proposition \ref{p:LqM}.
We proceed as in the proof of \cite[Proposition 3.22]{mou:reslim} where a similar statement for orthogonal Lie algebras was proved. First we record several previously proved results which will be used in the proof.

\begin{lem}[{\cite[Lemma 4.18]{mou:reslim}}]\label{l:x1}
Suppose $w$ is a highest-$\ell$-weight vector of $V_q(\gb\omega_{i,a,m})$ for some $i\in I,a\in\mathbb C(q)^\times$, and $m\in\mathbb Z_{\ge 0}$. Then, $x_{i,1}^-w = aq^mx_i^-w.$\hfill\qedsymbol
\end{lem}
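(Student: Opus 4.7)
The plan is to reduce to the rank-one situation via Lemma \ref{l:dsa} and then pin down the relevant scalar by applying $x_{i,0}^+$ to both sides of a candidate identity. Apply Lemma \ref{l:dsa} with $J=\{i\}$: the submodule $U_q(\tlie g_i)w$ of $V_q(\gb\omega_{i,a,m})$ is isomorphic, as a $U_q(\tlie g_i)\cong U_q(\tlie{sl}_2)$-module, to the Kirillov-Reshetikhin module of highest $\ell$-weight $\gb\omega_{i,a,m}$. For $m=0$ both sides of the claim vanish, so assume $m\ge 1$. By the well-known structure of $\lie{sl}_2$ Kirillov-Reshetikhin modules (they are $(m+1)$-dimensional and isomorphic as $U_q(\lie g_i)$-modules to $V_q(m\omega_i)$), the $\lie g_i$-weight space of weight $m\omega_i-\alpha_i$ inside $U_q(\tlie g_i)w$ is one-dimensional and spanned by $x_i^-w\ne 0$. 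Since $x_{i,1}^-w$ also lies in this weight space, there exists $c\in\mathbb C(q)$ with $x_{i,1}^-w=c\,x_i^-w$, and the task reduces to identifying $c$.

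To do so, apply $x_{i,0}^+$ to both sides. On the right-hand side, a standard $\lie{sl}_2$ computation using $k_iw=q^m w$ gives
$$x_{i,0}^+x_i^-w=\frac{k_i-k_i^{-1}}{q-q^{-1}}\,w=[m]\,w.$$
On the left-hand side, the defining commutator $[x_{i,0}^+,x_{i,1}^-]=\frac{\psi_{i,1}^+}{q-q^{-1}}$ combined with $x_{i,0}^+w=0$ yields
$$x_{i,0}^+x_{i,1}^-w=\frac{\psi_{i,1}^+\,w}{q-q^{-1}}.$$
Thus everything reduces to computing the eigenvalue of $\psi_{i,1}^+$ on $w$.

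This eigenvalue is the main computational step of the plan. Using \eqref{e:PsiLambda} together with the explicit factorization $\gb\omega_{i,a,m}(u)=\prod_{j=0}^{m-1}(1-aq^{m-1-2j}u)$ and reading off the coefficient of $u$, one gets $\gb\Psi_{\gb\omega_{i,a,m}}(\Lambda_{i,1})=-a[m]$, i.e.\ $\Lambda_{i,1}w=-a[m]w$. By \eqref{e:Lambdad}, extracting the coefficient of $u$ gives $\Lambda_{i,1}=-h_{i,1}$, so $h_{i,1}w=a[m]w$. Since $\psi_{i,1}^+=k_i(q-q^{-1})h_{i,1}$ and $k_iw=q^m w$, this gives $\psi_{i,1}^+w=q^m(q-q^{-1})a[m]w=a(q^{2m}-1)w$, whence $x_{i,0}^+x_{i,1}^-w=aq^m[m]\,w$. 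Comparing with $c[m]w$ forces $c=aq^m$, proving the lemma. The only mildly delicate ingredient in the plan is this $\psi_{i,1}^+$-eigenvalue extraction; everything else is a direct consequence of the defining relations of $U_q(\tlie g)$ and basic $\lie{sl}_2$ representation theory.
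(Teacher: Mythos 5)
Your argument is correct. The reduction to rank one via Lemma \ref{l:dsa}, the observation that the weight-$(m\omega_i-\alpha_i)$ space of the $(m+1)$-dimensional $\lie{sl}_2$ Kirillov--Reshetikhin module $U_q(\tlie g_i)w$ is one-dimensional, and the identification of the scalar $c$ by applying $x_{i,0}^+$ and reading off the $\psi_{i,1}^+$-eigenvalue through \eqref{e:PsiLambda} and \eqref{e:Lambdad} are all exactly as they should be; in particular the step $x_{i,0}^+x_{i,1}^-w=\tfrac{\psi_{i,1}^+}{q-q^{-1}}w$ correctly uses the convention $\psi_{i,1}^-=0$, and the chain $\Lambda_{i,1}w=-a[m]w\Rightarrow h_{i,1}w=a[m]w\Rightarrow\psi_{i,1}^+w=aq^m(q-q^{-1})[m]w$ forces $c=aq^m$.

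Note that the paper does not reprove this lemma but cites it from \cite[Lemma 4.18]{mou:reslim}, so there is no in-paper proof to compare against; your argument is the standard rank-one computation one would write down, and it is complete. The only caution worth noting is that the passage to the rank-one subalgebra is genuinely needed: in a general highest-$\ell$-weight module (e.g.\ the Weyl module) the full weight space of weight $m\omega_i-\alpha_i$ is not one-dimensional, so the one-dimensionality you use really does rely on restricting to $U_q(\tlie g_i)w$ and on the fact that the $\lie{sl}_2$-KR module is the $(m+1)$-dimensional irreducible, exactly as you invoke.
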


The following proposition follows from the results of \cite[Section 6]{cha:braid}.

\begin{prop}\label{p:cyclic}
Let $l\in\mathbb Z_{\ge 1}, i_j\in I, m_j\in\mathbb Z_{\ge 1}, a_j\in\mathbb C(q)^\times$ for $j=1,\dots, l$. If $\frac{a_{j}}{a_{k}}\notin q^{\mathbb Z_{> 0}}$ for $j>k$, then $V_q(\gb\omega_{i_1,a_1,m_1})\otimes \cdots\otimes V_q(\gb\omega_{i_l,a_l,m_l})$ is a highest-$\ell$-weight module.\hfill\qedsymbol
\end{prop}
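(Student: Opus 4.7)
The plan is induction on $l$; the base case $l=1$ is trivial. Set $V_j = V_q(\gb\omega_{i_j,a_j,m_j})$ with highest-$\ell$-weight vector $v_j$, and put $v = v_1\otimes\cdots\otimes v_l$. Iterated application of \eqref{e:comultuqg} together with Lemma \ref{l:comult} shows that $v$ is annihilated by every $x^+_{i,r}$ and has $\ell$-weight $\prod_j\gb\omega_{i_j,a_j,m_j}$; it thus suffices to prove $V_1\otimes\cdots\otimes V_l = U_q(\tlie g)v$.

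I would reduce to the two-factor case via associativity. By induction $W = V_2\otimes\cdots\otimes V_l$ is highest-$\ell$-weight with generator $w = v_2\otimes\cdots\otimes v_l$, hence is a quotient of $W_q(\prod_{j\ge 2}\gb\omega_{i_j,a_j,m_j})$ by Theorem \ref{t:weyl}. Applying the two-factor statement to $V_1$ and each $V_j$ with $j\ge 2$ (which is legitimate because $a_j/a_1\notin q^{\mathbb Z_{>0}}$ for all $j>1$) and threading these through the iterated tensor structure yields $U_q(\tlie g)(v_1\otimes w) = V_1\otimes W$. For the two-factor case itself, the strategy is to exhibit, under the hypothesis $a/b\notin q^{\mathbb Z_{>0}}$, a nonzero $U_q(\tlie g)$-module intertwiner (the normalized R-matrix) $R: V_q(\gb\omega_{i,a,m})\otimes V_q(\gb\omega_{j,b,n}) \to V_q(\gb\omega_{j,b,n})\otimes V_q(\gb\omega_{i,a,m})$ satisfying $R(v_1\otimes v_2) = v_2\otimes v_1$. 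Combining $R$ with Theorem \ref{t:cone} (which constrains the $\ell$-weights of any highest-$\ell$-weight submodule) and a character comparison using complete reducibility as a $U_q(\lie g)$-module then forces $U_q(\tlie g)(v_1\otimes v_2) = V_q(\gb\omega_{i,a,m})\otimes V_q(\gb\omega_{j,b,n})$.

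The main obstacle is the pole analysis of the R-matrix on tensor products of Kirillov-Reshetikhin modules: one must show that, once normalized to fix the tensor of highest-$\ell$-weight vectors, the R-matrix is rational in $z = a/b$ with poles confined to $z = q^k$ for certain $k\in\mathbb Z_{>0}$. This is the technical heart of \cite{cha:braid}, obtained via a careful inductive computation exploiting the braid-group action on tensor products of fundamental modules and ultimately reducing the pole count to the rank-one case $U_q(\tlie{sl}_2)$, where the relevant tensor products can be enumerated explicitly. A secondary delicate point is the compatibility of the spectral-parameter hypothesis under the passage from KR modules to their Weyl-module envelopes used in the associativity reduction, which must be checked by comparing the $\ell$-weight cones on both sides.
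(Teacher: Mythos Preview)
The paper gives no proof of this proposition; it simply cites \cite[Section~6]{cha:braid} and moves on. Your sketch correctly locates the main idea there (the normalized $R$-matrix and its pole analysis on tensor products of Kirillov--Reshetikhin modules), so in spirit you are aligned with what the paper intends.

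That said, two points in your write-up are not quite right. First, the assertion that $v$ is annihilated by every $x^+_{i,r}$ and is an eigenvector for $U_q(\tlie h)$ does not follow from \eqref{e:comultuqg} and Lemma~\ref{l:comult}: those describe only $\Delta(x^\pm_{i,0})$, $\Delta(k_i)$, and $\Delta(x^-_{i,1})$. The correct (and much shorter) argument is by weight: $v$ spans the one-dimensional top weight space of the tensor product, so $x^+_{i,r}v$ has strictly higher weight and must vanish, and $\eta v$ is forced to be a scalar multiple of $v$ for any $\eta\in U_q(\tlie h)$.

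Second, and more seriously, the inductive reduction as you state it has a gap. Knowing that $V_1\otimes V_j$ is cyclic on $v_1\otimes v_j$ for each $j\ge 2$, together with the cyclicity of $W=V_2\otimes\cdots\otimes V_l$, does not by itself imply that $V_1\otimes W$ is cyclic on $v_1\otimes w$; ``threading through the iterated tensor structure'' hides a nontrivial step. In \cite{cha:braid} this is handled by a separate lemma on how cyclicity propagates along a chain of tensor factors (roughly: if $V_1\otimes\cdots\otimes V_{l-1}$ and $V_{l-1}\otimes V_l$ are both cyclic on the tensor of highest vectors, then so is the full product), proved via the triangularity of the comultiplication modulo lower-weight terms. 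Your final paragraph about ``Weyl-module envelopes'' and ``$\ell$-weight cones'' does not correspond to anything actually needed here and can be dropped.
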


\begin{cor}[{\cite[Corollary 4.4]{mou:reslim}}]\label{c:tpkrmin}
Let $\lambda\in P^+, a_i\in\mathbb C(q)^\times, i\in I$, and $\gb\lambda=\prod_{i\in I} \gb\omega_{i,a_i,\lambda(h_i)}$. Then, there exists an ordering $i_1,\dots, i_n$ of $I$ such that $V_q(\gb\lambda)$ is isomorphic to the $U_q(\tlie g)$-submodule of $V_q(\gb\omega_{i_1,a_{i_1},\lambda(h_{i_1})})\otimes\cdots\otimes V_q(\gb\omega_{i_n,a_{i_n},\lambda(h_{i_n})})$ generated by the top weight space.\hfill\qedsymbol
\end{cor}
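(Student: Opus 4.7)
The plan is to verify that a highest-$\ell$-weight vector of $L(\gb\lambda)$ satisfies the defining relations of $M(\lambda)$. Under the hypothesis that $\overline\supp(\lambda)$ is of type $A$, Proposition \ref{p:lessrel} identifies $M(\lambda)$ with the module $N(\lambda)$, so it suffices to verify the simpler set of relations defining $N(\lambda)$. Let $v_q\in V_q(\gb\lambda)$ be a highest-$\ell$-weight vector, $\bar v_q$ its image in $\overline{V_q(\gb\lambda)}$, and $v\in L(\gb\lambda)=\tau_a^*\overline{V_q(\gb\lambda)}$ the same vector viewed through the pullback (with $a\in\mathbb C^\times$ determined by $\bgb\lambda=\gb\omega_{\lambda,a}$). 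By Lemma \ref{l:curalggen}, $L(\gb\lambda)=U(\lie g[t])v$, so only the relations at $v$ need be checked. The relations $h_iv=m_iv$, $\lie n^+[t]v=0$, and $(x_{\alpha_i}^-)^{\lambda(h_i)+1}v=0$ are immediate from highest-$\ell$-weight and integrability, while $(\lie h\otimes t\mathbb C[t])v=0$ follows because the $\tau_a$ twist sends the highest $\ell$-weight $\gb\omega_{\lambda,a}$ to $\gb\omega_{\lambda,0}$, for which $\sum_s\gb\Psi_{\gb\omega_{\lambda,0}}(\Lambda_{i,s})u^s=1$ and hence $h_{i,s}v=0$ for all $s\ge1$.

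The core content is the verification of $x_{\alpha,1}^-v=0$ for every $\alpha\in R_1^+$. Since $\tau_a(x_\alpha^-\otimes t)=x_\alpha^-\otimes(t-a)=x_{\alpha,1}^--a\,x_\alpha^-$, this is equivalent to the classical-limit identity
\begin{equation*}
x_{\alpha,1}^-\bar v_q=a\,x_\alpha^-\bar v_q\qquad\text{in}\qquad\overline{V_q(\gb\lambda)}.
\end{equation*}
I will establish this by embedding $V_q(\gb\lambda)$ into the tensor product $\bigotimes_{k=1}^n V_q(\gb\omega_{i_k,a_{i_k},m_{i_k}})$ via Corollary \ref{c:tpkrmin}, with $v_q$ identified with $v_1\otimes\cdots\otimes v_n$ (each $v_k$ a highest-$\ell$-weight vector of the $k$-th KR factor), and computing iterated comultiplications modulo $(q-1)$. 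For a simple root $\alpha=\alpha_i$, Lemma \ref{l:comult} yields $\Delta^{(n-1)}(x_{i,1}^-)=\sum_k 1^{\otimes(k-1)}\otimes x_{i,1}^-\otimes k_i^{\otimes(n-k)}+y$ with $\bar y=0$. Since $v_k$ has $U_q(\lie g)$-weight $m_{i_k}\omega_{i_k}$, weight considerations force $x_{i,1}^-v_k=0$ except at the unique $k$ with $i_k=i$, where Lemma \ref{l:x1} gives $x_{i,1}^-v_k=a_{i_k}q^{m_{i_k}}x_i^-v_k$; the corresponding expansion of $\Delta^{(n-1)}(x_i^-)$ has an identical non-vanishing term. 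Combining these and invoking Proposition \ref{p:fracqz} (which forces $\bar a_{i_k}=a$ for every $i_k\in\supp(\lambda)$) yields the identity for $\alpha=\alpha_i$.

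For a non-simple $\alpha\in R_1^+$ I proceed by induction on $\het(\alpha)$: write $\alpha=\alpha_j+\gamma$ with $\alpha_j$ a simple summand and $\gamma\in R_1^+$, and use the classical identity $x_{\alpha,1}^-=\pm[x_{j,1}^-,x_\gamma^-]$. Combining Lemmas \ref{l:comult} and \ref{l:comcom} then produces a comultiplication formula $\Delta(x_{\alpha,1}^-)=x_{\alpha,1}^-\otimes K_\alpha+1\otimes x_{\alpha,1}^-+z$ with $\bar z=0$ and $K_\alpha=\prod_{\alpha_i\le\alpha}k_i$. Applying the iterated version to $v_1\otimes\cdots\otimes v_n$, together with a weight/Lemma \ref{l:x1}-type analysis on each KR factor---tractable because $\overline\supp(\lambda)$ is of type $A$ so the relevant subsystem falls within the reach of Lemma \ref{l:dsa} and Proposition \ref{p:lchA}---reduces $\Delta^{(n-1)}(x_{\alpha,1}^-)(v_1\otimes\cdots\otimes v_n)$ modulo $(q-1)$ to $a\cdot\Delta^{(n-1)}(x_\alpha^-)(v_1\otimes\cdots\otimes v_n)$, giving the desired identity. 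The main technical obstacle is establishing and controlling this inductive comultiplication formula for $x_{\alpha,1}^-$ and matching the unique nonvanishing tensor component on each KR factor; it is precisely the type-$A$ restriction on $\overline\supp(\lambda)$ that keeps this bookkeeping within reach of the minimal-affinization techniques already in play.
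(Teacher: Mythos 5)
Your proposal is not a proof of Corollary~\ref{c:tpkrmin}. It is a sketch of the paper's argument for Proposition~\ref{p:LqM} in Subsection~\ref{ss:LqM}, which is a different statement. Corollary~\ref{c:tpkrmin} is a purely quantum-affine assertion with no hypothesis on $\overline{\rm supp}(\lambda)$: it asserts that for a suitable ordering $i_1,\dots,i_n$ of $I$ the $U_q(\tlie g)$-submodule of $V_q(\gb\omega_{i_1,a_{i_1},\lambda(h_{i_1})})\otimes\cdots\otimes V_q(\gb\omega_{i_n,a_{i_n},\lambda(h_{i_n})})$ generated by the top weight space is isomorphic to $V_q(\gb\lambda)$. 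The objects $M(\lambda)$, $N(\lambda)$, $L(\gb\lambda)$ and the classical limit that drive your argument do not enter at all.

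Three symptoms confirm the mismatch. First, you import the hypothesis that $\overline{\rm supp}(\lambda)$ is of type $A$, which appears nowhere in the corollary but is exactly the hypothesis of Proposition~\ref{p:LqM}. Second, your stated goal --- showing $x_{\alpha,1}^-v=0$ for all $\alpha\in R^+_1$ in $L(\gb\lambda)$ via the reduced relation set of Proposition~\ref{p:lessrel} --- is precisely the content of Proposition~\ref{p:LqM}. Third, and decisively, you explicitly \emph{use} Corollary~\ref{c:tpkrmin} inside the argument (``I will establish this by embedding $V_q(\gb\lambda)$ into the tensor product $\bigotimes_{k=1}^n V_q(\gb\omega_{i_k,a_{i_k},m_{i_k}})$ via Corollary~\ref{c:tpkrmin}''), so as a proof of that corollary the reasoning is circular.

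The corollary itself is cited from \cite{mou:reslim} and stated without proof here; the relevant input is Proposition~\ref{p:cyclic}. One chooses the ordering of $I$ so that the cyclicity criterion $a_{i_j}/a_{i_k}\notin q^{\mathbb Z_{>0}}$ for $j>k$ holds for the tensor product taken in the \emph{reverse} order; that reversed tensor product is then highest-$\ell$-weight, hence surjects onto $V_q(\gb\lambda)$. Passing to duals (which reverses the order of the tensor factors and converts the unique irreducible quotient into a unique irreducible submodule) and using that the top $\ell$-weight space of the tensor product is one-dimensional, one deduces that the submodule of the tensor product in the original order generated by the top weight space is irreducible, hence isomorphic to $V_q(\gb\lambda)$. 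No classical limit, graded module, comultiplication modulo $(q-1)$, or Heisenberg-style computation is involved.
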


\begin{prop}[{\cite[Proposition 3.13]{mou:reslim}}]\label{p:admevrel}
Suppose $\gb\lambda\in\cal P_\mathbb A^\times $ is such that $V_q(\gb\lambda)$ is a minimal affinization and that $J\subseteq I$ is an admissible subdiagram. Let $v$ be a highest-$\ell$-weight vector of $V=\overline{V_q(\gb\lambda)}, \lambda=\wt(\gb\lambda)$, and $a\in\mathbb C^\times$ be such that $\bgb\lambda = \gb\omega_{\lambda,a}$. Then $x_{\alpha,r}^-v=a^rx_\alpha^-v$ for every $\alpha\in R^+_J$.\hfill\qedsymbol
\end{prop}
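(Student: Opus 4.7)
The plan has three phases: reduce to $\lie g$ of type $A$ using admissibility of $J$, establish the proposition for Kirillov-Reshetikhin modules as a base case, and bootstrap to general minimal affinizations using the tensor-product realization of Corollary~\ref{c:tpkrmin}.

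\textbf{Reduction to type $A$.} By Lemma~\ref{l:dsa}, $U_q(\tlie g_J)v\cong V_q(\gb\lambda_J)$, which by Proposition~\ref{p:admsd} is itself a minimal affinization, and admissibility of $J$ forces $\lie g_J$ to be of type $A$. The $U_\mathbb A(\tlie g_J)$-lattice $U_\mathbb A(\tlie g_J)v$ is admissible in $V_q(\gb\lambda_J)$, so its classical limit is the $\tlie g_J$-submodule of $V$ generated by $v$, with the scalar $a$ preserved (since $\bgb{\lambda_J}$ is the $J$-restriction of $\gb\omega_{\lambda,a}$). Thus we may assume $\lie g=\lie g_J$ is of type $A$ and $J=I$.

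\textbf{KR base case.} For $\gb\lambda=\gb\omega_{i,a,m}$ with highest $\ell$-weight vector $w$, Lemma~\ref{l:x1} gives $x_{i,1}^-w=aq^m x_i^-w$, whose classical limit reads $x_{i,1}^-\bar w=a\,x_i^-\bar w$. For $r>1$, iterate the Drinfeld relation $x_{i,r+1}^-=-\tfrac{1}{[2]}[h_{i,1},x_{i,r}^-]$, using the eigenvalues of $h_{i,1}$ on $w$ (determined by \eqref{e:Lambdad} and $\gb\Psi_{\gb\lambda}$) to inductively obtain $x_{i,r}^-\bar w=a^r x_i^-\bar w$. For non-simple $\alpha\in R^+$, decompose $\alpha=\alpha_{j_1}+\cdots+\alpha_{j_\ell}$ with each partial sum a root, write $x_{\alpha,r}^-$ as an iterated bracket of simple loop generators (with indices summing to $r$), and apply the simple-root identity to obtain $x_{\alpha,r}^-\bar w=a^r x_\alpha^-\bar w$.

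\textbf{General case.} Proposition~\ref{p:fracqz} together with $\bgb\lambda=\gb\omega_{\lambda,a}$ gives $\gb\lambda=\prod_i\gb\omega_{i,a_i,\lambda(h_i)}$ with each $a_i$ specializing to $a$ at $q=1$. By Corollary~\ref{c:tpkrmin}, $V_q(\gb\lambda)$ embeds into a tensor product $\bigotimes_k V_q(\gb\omega_{i_k,a_{i_k},\lambda(h_{i_k})})$ with $v$ identified with $w_1\otimes\cdots\otimes w_n$, each $w_k$ a KR highest $\ell$-weight vector. Passing to classical limits with compatible $\mathbb A$-lattices and using the cocommutative $U(\tlie g)$-coproduct, $(x_\alpha\otimes t^r)v$ expands as a sum over $k$ in which each surviving factor $(x_\alpha\otimes t^r)\bar w_k$ equals $a^r x_\alpha^-\bar w_k$ by the KR base case; reassembly yields $x_{\alpha,r}^-v=a^r x_\alpha^-v$.

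\textbf{Main obstacle.} The inductive step in the KR base case for $r>1$ is the delicate point: $h_{i,1}$ acts as a scalar on $\bar w$ but not on $x_{i,r-1}^-\bar w$, so one must carefully commute $h_{i,1}$ past $x_{i,r-1}^-$ and verify that the resulting error commutators either vanish in the classical limit or combine with the leading term to produce precisely the factor $a^r$. The bookkeeping uses the admissible $\mathbb A$-lattice $U_\mathbb A(\tlie g)w$ and the minimal-affinization constraint (which pins down the $a_i$ up to $q^{\mathbb Z}$) to ensure that all $q$-power contributions cancel correctly in the $q\to 1$ specialization.
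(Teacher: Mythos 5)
This proposition is cited from \cite[Proposition~3.13]{mou:reslim} and the paper gives no proof, so I can only assess your proposal on its own merits and against the techniques visible elsewhere in the paper (notably the proof of Proposition~\ref{p:LqM}, which manipulates the same objects).

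Your reduction to type $A$ is sound, and the implication goes the right way: since $U_\mathbb A(\tlie g_J)v\subseteq U_\mathbb A(\tlie g)v$, proving that $X_{\alpha,r}v - a^rX_\alpha v$ lies in $(q-1)U_\mathbb A(\tlie g_J)v$ a fortiori puts it in $(q-1)U_\mathbb A(\tlie g)v$, which is what vanishing in $\overline{V_q(\gb\lambda)}$ requires. Your worry in the ``main obstacle'' paragraph is also unfounded: with $h_{i,1}w=d_1w$ and $x_{i,1}^-w=c_1x_i^-w$, the Drinfeld relation gives $x_{i,s+1}^-w=-\tfrac{c_s}{[2]}(h_{i,1}x_i^-w-d_1x_i^-w)=c_sc_1x_i^-w$ because the $d_1$-terms cancel and $[h_{i,1},x_i^-]w=-[2]c_1x_i^-w$; the recursion closes cleanly with $c_r=c_1^r$ and no delicate bookkeeping. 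For the non-simple roots in the KR case, however, ``write $x_{\alpha,r}^-$ as an iterated bracket with indices summing to $r$'' is underspecified in a way that matters: if you place positive loop degree on a simple factor $\alpha_j$ with $j\ne i$, the term $x_{j,s}^-x_\beta^-w$ is not controlled by the simple-root identity and the argument becomes circular. The correct move is to strip off a simple root $\alpha_j$ with $j\ne i$ at loop degree $0$ (always possible in type~$A$ unless $\alpha=\alpha_i$), using $x_j^-w=0$, so that $x_{\alpha,r}^-w=[x_j^-,x_{\beta,r}^-]w=a^rx_j^-x_\beta^-w=a^rx_\alpha^-w$.

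The genuine gap is in the tensor-product bootstrap. You compute $(x_\alpha\otimes t^r)(\bar w_1\otimes\cdots\otimes\bar w_n)$ inside $\bar L_1\otimes\cdots\otimes\bar L_n$ and conclude the identity in $\bar L=\overline{V_q(\gb\lambda)}$. But the natural map $\bar L\to\bar L_1\otimes\cdots\otimes\bar L_n$ is the specialization at $q=1$ of the lattice inclusion $L\hookrightarrow L_1\otimes\cdots\otimes L_n$, and this specialization need not be injective: $\otimes_\mathbb A\mathbb C$ is only right exact, and injectivity would require the cokernel $(L_1\otimes\cdots\otimes L_n)/L$ to be $(q-1)$-torsion-free, which is not automatic. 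The image of $\bar L$ in the tensor product is exactly the submodule generated by the top weight space, i.e.\ (after twisting) the module $T(\lambda)$, and whether $\bar L\twoheadrightarrow T(\lambda)$ is an isomorphism is precisely the content of Conjecture~\ref{c:mou} --- so your argument as written proves the identity in the \emph{quotient} $T(\lambda)$ but not in $\bar L$ itself. To close this gap you must reverse the order of operations: do the coproduct computation in the \emph{quantum} module $V_q(\gb\lambda)$ (where the embedding $V_q(\gb\lambda)\hookrightarrow\bigotimes_kV_q(\gb\omega_{i_k,a_{i_k},\lambda(h_{i_k})})$ is honestly injective), show $X_{\alpha,r}v-a_\alpha(q)X_\alpha v$ lies in $(q-1)U_\mathbb A(\tlie g)v$ with $a_\alpha(1)=a^r$, and only then specialize. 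This is exactly the strategy the paper carries out in the proof of Proposition~\ref{p:LqM} via Lemmas~\ref{l:comcom}, \ref{l:comult} and Corollary~\ref{c:tpkrmin}, and is almost certainly what the cited proof in \cite{mou:reslim} does.
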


If $\alpha\in R^+_J$ for some admissible diagram $J$, we shall refer to $\alpha$ as an admissible root.

\begin{proof}[Proof of Proposition \ref{p:LqM}]
Let $a\in\mathbb C$ be such that $\bar{\gb\lambda}=\gb\omega_{\lambda,a}$. We fix a highest-$\ell$-weight vector $v$ of $V=V_q(\gb\lambda)$ and $a_i\in\mathbb A^\times, i\in I$, such that $\gb\lambda=\prod_{i\in I} \gb\omega_{i,a_i,m_i}$. Let also $\bar v$ be the image of $v$ in $\overline V$ and $v'$ be the image of $\bar v$ in $L(\gb\lambda)$. By Proposition \ref{p:lessrel}, we need to show that $x_{\alpha,1}^-v'=0$ for all $\alpha\in R^+_1$. This is equivalent to showing that
\begin{equation}\label{e:LqM}
x_{\alpha,1}^-\bar v= a\bar v \qquad\text{for all}\qquad \alpha\in R^+_1.
\end{equation}
By Proposition \ref{p:admevrel}, \eqref{e:LqM} holds if $\alpha$ is an admissible root. Therefore, it remains to show that
\begin{equation}\label{e:LqMbeta}
x_{\beta_j,1}^-\bar v= a\bar v \qquad\text{for all}\qquad 7< j<20.
\end{equation}

Assume first that $\supp(\lambda)\subseteq \{1,2,3,4,5\}$. In this case $\alpha_6\in R(\lambda,0)$ and \eqref{e:LqMbeta} with $j\in\{8,9,11,12\}$ follows from the following relations
\begin{equation*}
x_{\beta_8,1}^-=[x_6^-,x_{\beta_3,1}^-], \quad x_{\beta_9,1}^-=[x_6^-,x_{\beta_4,1}^-], \quad x_{\beta_{11},1}^-=[x_6^-,x_{\beta_6,1}^-], \quad x_{\beta_{12},1}^-=[x_6^-,x_{\beta_7,1}^-]
\end{equation*}
together with the fact that $\beta_3,\beta_4,\beta_6$, and $\beta_7$ are admissible roots. Next, assume that we have proved \eqref{e:LqMbeta} for $j\in\{10,14,15,16\}$. Then, \eqref{e:LqMbeta} for the remaining values of $j$ follows from the following relations
\begin{equation*}
x_{\beta_{13},1}^-=[x_6^-,x_{\beta_{10},1}^-], \quad x_{\beta_{17},1}^-=[x_6^-,x_{\beta_{14},1}^-], \quad x_{\beta_{18},1}^-=[x_6^-,x_{\beta_{15},1}^-], \quad x_{\beta_{19},1}^-=[x_6^-,x_{\beta_{16},1}^-].
\end{equation*}
In order to prove \eqref{e:LqMbeta} for $j\in\{10,14,15,16\}$, it suffices to find elements $X_j, X_{j,1}\in U_\mathbb A(\tlie n^-)$ such that
\begin{equation}\label{e:LqMq}
\overline{X_j}=x_{\beta_j}^-, \quad \overline{X_{j,1}}=x_{\beta_j,1}^-, \quad\text{and}\quad X_{j,1}v=a_j(q)X_jv + x_jv
\end{equation}
for some $a_j(q)\in\mathbb A$ and $x_j\in U_\mathbb A(\lie g)$ satisfying $a_j(1)=a$ and $\overline x_j=0$. We prove the existence of such elements assuming
\begin{equation}\label{e:minpos}
a_{i+1} = a_iq^{m_i+m_{i+1}+1} \qquad\text{for all}\qquad i<5.
\end{equation}
The case $a_{i+1} = a_iq^{-(m_i+m_{i+1}+1)}, i<5,$ is proved similarly using part (b) of Proposition \ref{p:lchA} instead of part (a). Let $i_0=\max\{i\in I:m_i\ne 0\}$ (in the case $a_{i+1} = a_iq^{-(m_i+m_{i+1}+1)}, i<5,$ we would use $i_0=\min\{i\in I:m_i\ne 0\}$). The relations $X_{j,1}v=a_j(q)X_jv + x_jv$ of \eqref{e:LqMq} are the quantized relations alluded to in the title of this subsection.

Let $\gb\lambda'$ be such that $\gb\lambda=\gb\lambda'\gb\omega_{i_0,a_{i_0},m_{i_0}}$. Let also $v_1,v_2$ be highest-$\ell$-weight vectors of $V_q(\gb\lambda')$ and $V_q(\gb\omega_{i_0,a_{i_0},m_{i_0}})$, respectively.
By \eqref{e:minpos}, Proposition \ref{p:cyclic}, and Corollary \ref{c:tpkrmin}, the assignment $v\mapsto v_1\otimes v_2$ extends to an isomorphism $V\cong U_q(\tlie g)(v_1\otimes v_2)\subseteq V_q(\gb\lambda')\otimes V_q(\gb\omega_{i_0,a_{i_0},m_{i_0}})$. Henceforth, we identify $v$ with $v_1\otimes v_2$.
We write down the proof of the existence of elements as in \eqref{e:LqMq} for $j=16$ assuming $i_0=5$ (the other cases are proved similarly and the computations are simpler). Set
\begin{equation*}
X_{14} = [x_4^-,[x_3^-,[x_1^-,x_2^-]]], \qquad X_{16} = [x_5^-,X_{14}], \quad\text{and}\quad X_{16,1} = [x_{5,1}^-,X_{14}].
\end{equation*}
Quite clearly, $X_{16}, X_{16,1}\in U_\mathbb A(\tlie g)$ satisfy the first two identities in \eqref{e:LqMq}. By Lemmas \ref{l:comcom} and \ref{l:comult}, modulo an element of the form $xv$ with $x\in U_\mathbb A(\tlie g)\otimes U_\mathbb A(\tlie g)$ such that $\bar x=0$, we have
\begin{align*}
X_{16}v & = x_5^-X_{14}(v_1\otimes v_2)- X_{14}x_5^-(v_1\otimes v_2)\\
& = x_5^-((X_{14}v_1)\otimes v_2)-X_{14}(v_1\otimes (x_5^-v_2))\\
& = (x_5^-X_{14}v_1)\otimes (k_5^{-1}  v_2) + (X_{14}v_1)\otimes (x_5^- v_2)\\
& - (X_{14}v_1)\otimes ((k_1k_2k_3k_4)^{-1}x_5^-v_2)- v_1\otimes(X_{14}x_5^-v_2)\\
& = q^{-m_5}(x_5^-X_{14}v_1)\otimes v_2 + (1-q^{-m_5})(X_{14}v_1)\otimes(x_5^-v_2) - v_1\otimes(X_{14}x_5^-v_2)
\end{align*}
while
\begin{align*}
X_{16,1}v & = x_{5,1}^-X_{14}(v_1\otimes v_2)- X_{14}x_{5,1}^-(v_1\otimes v_2)\\
& = x_{5,1}^-((X_{14}v_1)\otimes v_2)-X_{14}(v_1\otimes (x_{5,1}^-v_2))\\
& = (x_{5,1}^-X_{14}v_1)\otimes (k_5  v_2) + (X_{14}v_1)\otimes (x_{5,1}^- v_2)\\
& - (X_{14}v_1)\otimes ((k_1k_2k_3k_4)^{-1}x_{5,1}^-v_2)- v_1\otimes(X_{14}x_{5,1}^-v_2)\\
& = q^{m_5}(x_{5,1}^-X_{14}v_1)\otimes v_2 + (1-q^{-m_5})(X_{14}v_1)\otimes(x_{5,1}^-v_2) - v_1\otimes(X_{14}x_{5,1}^-v_2).
\end{align*}
Using Lemma \ref{l:x1} we get
\begin{align*}
X_{16,1}v &  = q^{m_5}(x_{5,1}^-X_{14}v_1)\otimes v_2 + (1-q^{-m_5})(X_{14}v_1)\otimes(a_5q^{m_5}x_5^-v_2) - v_1\otimes(X_{14}(a_5q^{m_5}v_2))\\
&= a_5q^{m_5}X_{16}v + q^{m_5}(x_{5,1}^-X_{14}v_1)\otimes v_2 - a_5(x_5^-X_{14}v_1)\otimes v_2.
\end{align*}
Since $a_{16}(q):=a_5q^{m_5}$ satisfies $a_{16}(1)=a$, in order to prove that $X_{16}$ and $X_{16,1}$ satisfy the last identity of \eqref{e:LqMq}, it suffices to show that
\begin{equation}\label{e:LqMend}
q^{m_5}(x_{5,1}^-X_{14}v_1)\otimes v_2 = a_5(x_5^-X_{14}v_1)\otimes v_2.
\end{equation}
Notice that $x_{5,r}^+X_{14}v_1=0$ for all $r\in\mathbb Z$ and let $W$ be the $U_q(\tlie g_5)$-submodule of $V_q(\gb\lambda')$ generated by $X_{14}v_1$. Then, by Proposition \ref{p:lchA}(a), $W$ is a highest-$\ell$-weight module with highest $\ell$-weight $\gb\omega_{5,a_4q^{m_4}}$. It then follows from Lemma \ref{l:x1} that
$$x_{5,1}^-X_{14}v_1 = a_4q^{m_4+1}x_5^-X_{14}v_1.$$
This and \eqref{e:minpos} imply \eqref{e:LqMend}.

The case $\supp(\lambda)\subseteq \{1,2,3,6\}$ is dealt with similarly and the case $\supp(\lambda)\subseteq \{3,4,5,6\}$ then follows using the symmetry of the Dynkin diagram. We omit the details.
\end{proof}

\subsection{Upper bounds}\label{ss:ub} In this subsection we prove \eqref{e:ub}. Let $v\in M(\lambda)_\lambda$ be nonzero.
\begin{lem}\label{l:coord}
For every $i\in I, m\in \mathbb Z_{\ge 0}$, and $\alpha=\sum_{j\in I} a_j\alpha_j\in R^+$ we have $\alpha\in R(i,m,a_i)$. In particular:
\begin{enumerate}
\item $R(1,m,1)=R(5,m,1)=R^+$.
\item $R(6,m,1)\supseteq R^+\setminus\{\beta_{30}\}$ and $R(6,m,2)=R^+$.
\item $R(2,m,1)\supseteq R^+\setminus\{\beta_{24},\beta_{26},\beta_{28},\beta_{29},\beta_{30}\}$ and $R(2,m,2)=R^+$.
\item $R(4,m,1)\supseteq R^+\setminus\{\beta_{25},\beta_{27},\beta_{28},\beta_{29},\beta_{30}\}$ and $R(4,m,2)=R^+$.
\item $R(3,m,1)\supseteq R^+\setminus\{\beta_j:j\ge 20\}, R(3,m,2)\supseteq R^+\setminus\{\beta_{29},\beta_{30}\}$, and $R(3,m,3)=R^+$.
\end{enumerate}
\end{lem}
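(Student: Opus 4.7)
My plan is to prove the main assertion uniformly for all $i$ by induction on $\het(\alpha)$, and then to deduce (a)--(e) by inspecting Table 1. Fix $i \in I$ and $m \in \mathbb{Z}_{\ge 0}$, write $v = v_{i,m}$, and aim to show $x_{\alpha, a_i}^- v = 0$ for every $\alpha = \sum_{j \in I} a_j \alpha_j \in R^+$. The base case $\het(\alpha) = 1$ is immediate from the defining relations \eqref{e:NKRdef}: either $\alpha = \alpha_i$ (so $a_i = 1$, and $x_{\alpha_i, 1}^- v = 0$), or $\alpha = \alpha_j$ with $j \ne i$ (so $a_i = 0$, and $x_{\alpha_j}^- v = 0$).

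For the inductive step, I would invoke the standard root-system fact that every $\alpha \in R^+$ of height at least two admits a decomposition $\alpha = \alpha_j + \alpha'$ with $\alpha' \in R^+$; in the simply laced setting this yields $[x_{\alpha_j}^-, x_{\alpha'}^-] = c\, x_\alpha^-$ for some nonzero scalar $c$. Writing $a_i'$ for the coefficient of $\alpha_i$ in $\alpha'$, one has $a_i' = a_i$ when $j \ne i$ and $a_i' = a_i - 1$ when $j = i$. I would split the $t$-exponent as
$$(r', r'') = \begin{cases} (0, a_i) & \text{if } j \ne i, \\ (1, a_i - 1) & \text{if } j = i, \end{cases}$$
so that $r' + r'' = a_i$ and $r'' = a_i'$ in both cases. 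Lifting the bracket identity to $\tlie g$ gives $x_{\alpha, a_i}^- = c^{-1}\,[x_{\alpha_j, r'}^-, x_{\alpha', r''}^-]$, and expanding on $v$ shows the first summand is killed by the inductive hypothesis (since $r'' = a_i'$), while the second is killed by \eqref{e:NKRdef} (since $x_{\alpha_j, r'}^- v = 0$ in both cases, being either $x_{\alpha_j}^- v$ with $j\ne i$, or $x_{\alpha_i,1}^- v$). This closes the induction.

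With the main assertion in hand, parts (a)--(e) should fall out from a direct reading of Table 1: one identifies the maximum $M_i$ of $a_i$ over $R^+$ (namely $1$ for $i \in \{1,5\}$, $2$ for $i \in \{2,4,6\}$, and $3$ for $i = 3$) together with the sets of roots realizing each intermediate value (e.g.\ $a_2 = 2$ exactly on $\{\beta_{24}, \beta_{26}, \beta_{28}, \beta_{29}, \beta_{30}\}$, $a_6 = 2$ exactly on $\{\beta_{30}\}$, $a_3 = 3$ exactly on $\{\beta_{29}, \beta_{30}\}$, etc.), and concludes via the main claim $\alpha \in R(i,m,a_i)$ combined with the monotonicity \eqref{e:Rgrows}. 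I expect the substantive step to be the inductive bracket computation above: one must arrange the $t$-splitting so that in the expanded commutator the first summand is killed by the inductive hypothesis and the second by a defining relation, and it is precisely the dichotomy $j = i$ versus $j \ne i$ that forces the two different choices of $(r', r'')$.
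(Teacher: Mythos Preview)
Your proof is correct and follows essentially the same approach the paper alludes to: the paper omits the details, noting only that ``the proof is analogous to that of \cite[Proposition 1.2]{cha:fer}'' and that (a)--(e) follow from the main assertion by inspection of Table 1. Your induction on $\het(\alpha)$, with the two-case splitting of the $t$-exponent according to whether the peeled simple root is $\alpha_i$ or not, is precisely the argument one would reconstruct from that reference, and your derivation of (a)--(e) from the main claim together with \eqref{e:Rgrows} is exactly what the paper intends.
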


\begin{proof}
Statements (a)-(e) follow from the first statement by inspection of Table 1. Conversely, clearly items (a)-(e) together imply the first statement. The proof is analogous to that of \cite[Proposition 1.2]{cha:fer}  (see also \cite[Lemma 5.2.8]{per}). We omit the details.
\end{proof}

Observe that the above Lemma together with \eqref{e:Rgrows} imply
\begin{equation*}
x_{\alpha_i,r}^-v=x_{\beta_j,r}^-v=x_{\beta_k,s}^-v=0
\end{equation*}
for all $i\in I, j<20, k<29, r\ge 1, s\ge 2$ and $R(\lambda,3)=R^+.$
Let $R'(i,m,r)$ be the set on the right-hand-side of the inclusion symbol of the appropriate item of Lemma \ref{l:coord}. It will follow from Section \ref{ss:kr} below that, if $m>0$, then
\begin{equation}\label{e:coord}
R(i,m,r)=R'(i,m,r).
\end{equation}
Set $R'(\lambda,r) = \cap_{i\in I} R'(i,m_i,r)$ and let $\lie r(\lambda)$ be the subspace of $\lie g[t]$ spanned by $\{x_{\alpha,1}^-, x_{\beta,2}^-: \alpha\in R^+\backslash R'(\lambda,1), \beta\in R^+\backslash R'(\lambda,2)\}$ which is clearly an abelian ideal of $\lie n^-[t]$. Since we are assuming $m_3=0$, we have $R'(\lambda,2)=R^+$ and, therefore, $\lie r(\lambda)$ is the subspace of $\lie g[t]$ spanned by $\{x_{\alpha,1}^-: \alpha\in R^+\backslash R'(\lambda,1)\}$. Since $R(\lambda,r)=R^+$ for all $r\ge 2$ by \eqref{e:Rgrows}, a straightforward application of the PBW Theorem implies
\begin{equation}\label{e:Ugen}
M(\lambda) = U(\lie n^-[t])v = U(\lie n^-)U(\lie r(\lambda))v.
\end{equation}
Moreover,
\begin{equation}\label{e:R1245}
R(\lambda,1)\supseteq R^+\setminus\{\beta_{24}, \beta_{25}, \beta_{26},
\beta_{27}, \beta_{28},\beta_{29},\beta_{30}\}.
\end{equation}
by Lemma \ref{l:coord} and, therefore,
\begin{equation}\label{e:Ufull1245}
M(\lambda)=U(\lie n^-) U(x^-_{\beta_{30},1}) U(x^-_{\beta_{29},1}) U(x^-_{\beta_{28},1})  U(x^-_{\beta_{27},1}) U(x^-_{\beta_{26},1}) U(x^-_{\beta_{25},1}) U(x^-_{\beta_{24},1})v.
\end{equation}
We now apply Lemma \ref{l:heis} to prove that
\begin{equation}\label{e:U12456}
M(\lambda)=U(\lie n^-) U(x^-_{\beta_{30},1})U(x^-_{\beta_{28},1}) U(x^-_{\beta_{27},1}) U(x^-_{\beta_{26},1}) U(x^-_{\beta_{25},1}) U(x^-_{\beta_{24},1})v.
\end{equation}
Indeed, let $x=x_3^-$, $y=x^-_{\beta_{28},1}$, $z=x^-_{\beta_{29},1}$ which generates a three-dimensional Heisenberg subalgebra of $\lie g[t]$. Since $xv=0$, it follows from Lemma \ref{l:heis} that $(x^-_{\beta_{28},1})^{r}(x^-_{\beta_{29},1})^{s} v$ is a multiple of $(x^-_3)^{s} (x^-_{\beta_{28},1})^{r+s} v$ for every $r,s\in\mathbb Z_\ge 0$.  Since $[x_3^-,x^-_{\beta_j,1}]=0$ for all $24\le j\le 30, j\ne 28$, \eqref{e:U12456} follows.

Given $\gbr r=(r_1,r_2,r_3,r_4,r_5,r_6)\in\mathbb Z_{\ge 0}^6$, set
$$\gbr x_{_\gbr r} = (x^-_{\beta_{30},1})^{r_6}(x^-_{\beta_{28},1})^{r_5}(x^-_{\beta_{27},1})^{r_4}(x^-_{\beta_{26},1})^{r_3}(x^-_{\beta_{25},1})^{r_2} (x^-_{\beta_{24},1})^{r_1}$$
so that \eqref{e:U12456} is equivalent to
\begin{equation}\label{e:12456}
M(\lambda)=\sum_{\gbr r\in\mathbb Z_{\ge 0}^6} U(\lie n^-)\gbr x_{_\gbr r}v.
\end{equation}
Recall the definition of $\wt(\gbr r)$ in Subsection \ref{ss:limits} and use Table 2 to observe that $\gbr x_{_\gbr r}v\in M(\lambda)[\gr(\gbr r)]_{\wt(\gbr r)}$.

Consider the Heisenberg subalgebra of $\lie g[t]$ generated by $\{x_1^-, x^-_{\beta_{25},1},$ $x^-_{\beta_{27},1}\}$. Since $(x_1^-)^{m_1+1}v=0$ and $[x_1^-,x^-_{\beta_j,1}]=0$ for all $24\le j\le 30, j\ne 25$, it follows from Lemma \ref{l:heis} that we can restrict the sum of \eqref{e:12456} to  $\gbr r\in\mathbb Z_{\ge 0}^6$ such that $r_4\le m_1$. Similarly, by working with the Heisenberg subalgebra generated by $\{x_5^-, x^-_{\beta_{24},1},x^-_{\beta_{26},1}\}$ we can assume $r_3\le m_5$.

Next, we show that we can restrict the sum of \eqref{e:12456} to  $\gbr r\in\mathbb Z_{\ge 0}^6$ such that $r_1+r_3+r_5\le m_2$ and $r_2+r_4+r_5\le m_4$. By contradiction, assume this is not the case. It then follows from Lemma \ref{l:hwvecs} that there exists $\gbr r\in\mathbb Z_{\ge 0}^6$ satisfying either $r_1+r_3+r_5> m_2$ or $r_2+r_4+r_5> m_4$ and such that $V(\wt(\gbr r))$ is an irreducible summand of $M(\lambda)$. Moreover, the injectivity of $\wt:\mathbb Z^6\to P$ implies that the projection of $\gbr x_{_\gbr r}v$ on this summand is non zero. Fix such $\gbr r$ and suppose $r_1+r_3+r_5> m_2$ (the other case follows from the symmetry of the Dynkin diagram). Let $\gbr s=(r_1,r_2,r_3,0,r_4+r_5,r_6)$ and notice that
\begin{equation}\label{e:ub12456}
(x_2^+)^{r_4}\gbr x_{_\gbr s}v = c\gbr x_{_\gbr r}v \qquad\text{for some}\qquad c\in\mathbb C^\times.
\end{equation}
This easily follows from the relations
\begin{gather*}
[x_2^+,x_{\beta_j,1}^-]=0 \qquad\text{for all}\qquad 24\le j\le 30, \quad j\ne 26,28, \\
[x_2^+,x_{\beta_{26},1}^-]=x_{\beta_{23},1}^-,  \qquad [x_2^+,x_{\beta_{28},1}^-]=x_{\beta_{27},1}^-,\\
[x_{\beta_{23},1}^-,x_{\beta_j,1}^-]=0 \qquad\text{for all}\qquad 24\le j\le 30,  \qquad\text{and}\qquad x_{\beta_{23},1}^-v=0.\\
\end{gather*}
It follows from \eqref{e:ub12456} that the projection of $\gbr x_{_\gbr s}v$ on $V(\wt(\gbr r))$ is non zero and, hence, $V(\wt(\gbr r))_{\wt(\gbr s)}\ne 0$. We claim that this is a contradiction.
Indeed, notice that $\wt(\gbr s)(h_2) = (m_2-r_1-r_3-r_4-r_5)$. Hence, $\sigma_2\wt(\gbr s) = \wt(\gbr s)-(m_2-r_1-r_3-r_4-r_5)\alpha_2$ is a weight of $V(\wt(\gbr r))$. Here, $\sigma_2\in\cal W$ is the simple reflection associated with $\alpha_2$. Since $\wt(\gbr s)=\wt(\gbr r)-r_4\alpha_2$, it follows that
\begin{equation*}
\sigma_2\wt(\gbr s) = \wt(\gbr r) + (r_1+r_3+r_5-m_2)\alpha_2> \wt(\gbr r),
\end{equation*}
contradicting $V(\wt(\gbr r))_{\sigma_2\wt(\gbr s)}\ne 0$.

So far we proved that the sum in \eqref{e:12456} can be restricted to $\gbr r\in\mathbb Z_{\ge 0}^6$ such that $r_4\le m_1, r_3\le m_5, r_1+r_3+r_5\le m_2$, and $r_2+r_4+r_5\le m_4$. Now, observe that, for such $\gbr r$, $\wt(\gbr r)\in P^+$ iff $\gbr r\in\cal A$.  Therefore, by Lemma \ref{l:hwvecs}, we must have a surjective homomorphism of $\lie g$-modules
\begin{equation*}
\opl_{\gbr r\in\cal A_r}^{} V(\wt(\gbr r))\to M(\lambda)[r]
\end{equation*}
for every $r\in\mathbb Z_{\ge 0}$ and \eqref{e:ub} follows.

\begin{rem}
Let $w\in T(\lambda)_\lambda$ be nonzero and notice that, since $T(\lambda)$ is a quotient of $M(\lambda)$, equations \eqref{e:12456} remain valid after replacing $M(\lambda)$ by $T(\lambda)$ on the left-hand-side and $v$ by $w$ on the right-hand-side.
\end{rem}

\subsection{The Kirillov-Reshetikhin case}\label{ss:kr}

In this subsection we assume $\lambda=m_i\omega_i$ for some $i\in I, i\ne 3,$ and prove \eqref{e:lb} in this case. As mentioned earlier, for such $\lambda$, \eqref{e:chM} (and hence \eqref{e:lb}) follows from \cite{her:krc,jap:rem} (see also \cite{cha:fer}). However, in order to prove \eqref{e:lb} for more general $\lambda$ later, we will need further details about this case than just \eqref{e:lb}. Hence, we consider it separately. We split the proof in cases according to the value of $i$. We keep denoting by $v$ a nonzero vector in $M(\lambda)_\lambda$.

\subsubsection{} Assume $i=1$ or $i=5$ and notice that Lemma \ref{l:coord} implies $\lie r(\lambda)=0$ in this case. Hence, $M(\lambda)=U(\lie n^-)v$ and it follows that $M(\lambda)$ is isomorphic to the pullback of $V(\lambda)$ by the map $\lie g[t]\to \lie g, x\otimes f(t)\mapsto f(0)x$. Since $\cal A=\{\lambda\}$ in this case, \eqref{e:lb} follows.

\subsubsection{} Now suppose $i=6$. Notice that $\cal A_r=\{(0,0,0,0,0,r)\}$ for all $0\le r\le m_6$ and $\cal A_r=\emptyset$ otherwise. Since $\wt((0,0,0,0,0,r))=(m_6-r)\omega_6$, \eqref{e:lb} becomes
\begin{equation}\label{e:lb6}
t_{(m_6-r)\omega_6,r}\ne 0 \quad\text{for all}\quad 0\le r\le m_6.
\end{equation}
We begin proving this in the case $m_6=1$ in which case we have $T(\lambda)=M(\lambda)$ by definition. Observe that ${\rm Hom}_\lie g(\lie g\otimes V(\omega_6), V(\omega_6))\ne 0$ which is true since $V(\omega_6)$ is isomorphic to the adjoint representation. Hence, we can apply the construction given by \eqref{e:construction} with $V_0=V(\omega_6)$ and $V_1=V(0)$. One easily checks that the highest weight vector $w_0$ of $V_0$ satisfies the relations satisfied by $v$ and, hence, the module $V$ constructed in this way is a quotient of $M(\omega_6)$. Since $V[0]\cong V(\omega_6)$ and $V[1]\cong V(0)$, \eqref{e:lb6} follows. Moreover, we clearly have $x_{\beta_{30},1}^-w_0\ne 0$ (otherwise the map $p_0$ would be zero) and, hence, $x_{\beta_{30},1}^-v\ne 0$. In particular, \eqref{e:coord} holds for $i=6$.

For $m_6>1$, let $w\in M(\omega_6)_{\omega_6}$ be nonzero. Since $T(\lambda)$ is generated by $w^{\otimes m_6}\in M(\omega_6)^{\otimes m_6}$ one easily checks that $(x_{\beta_{30},1}^-)^{r}w^{\otimes m_6}\ne 0$ for all $r\le m_6$. In particular,
\begin{equation}\label{e:coord6}
(x_{\beta_{30},1}^-)^{r}v\ne 0 \qquad\text{iff}\qquad r\le m_6.
\end{equation}
By the remark closing Subsection \ref{ss:ub}, $T(\lambda)=\sum_{r=0}^{m_6} U(\lie n^-)(x_{\beta_{30},1}^-)^{r}w^{\otimes m_6}$. Hence,  $(x_{\beta_{30},1}^-)^{r}w^{\otimes m_6}$ must be a highest-weight vector in $T(\lambda)[r]$ which implies \eqref{e:lb6}.

\subsubsection{} Next, let $i=2$. The proof is parallel to the previous case. Namely,  \eqref{e:lb} becomes equivalent to
\begin{equation}\label{e:lb2}
t_{(m_2-r)\omega_2+r\omega_5,r}\ne 0 \quad\text{for all}\quad 0\le r\le m_2.
\end{equation}
Notice that $x_{\beta_{24},1}^-$ plays the role that $x_{\beta_{30},1}^-$ did in the case $i=6$. If $m_2=1$, we again use the construction given by \eqref{e:construction} this time with $V_0=V(\omega_2)$ and $V_1=V(\omega_5)$. In particular, it follows that $x_{\beta_{24},1}^-v\ne 0$. For $m_2> 1$, let $w\in M(\omega_2)_{\omega_2}$ be nonzero. As before, we conclude that $(x_{\beta_{24},1}^-)^{r}w^{\otimes m_2}\ne 0$ for all $0\le r\le m_2$. Equation \eqref{e:lb2} follows as in the previous case by using that $T(\lambda)=\sum_{r=0}^{m_2} U(\lie n^-)(x_{\beta_{24},1}^-)^{r}w^{\otimes m_2}$.

We now record the following lemma which, in particular, proves \eqref{e:coord} for $i=2$.

\begin{lem}\label{l:coord2}
Let $r_j\in\mathbb Z_{\ge 0}, j=1,\dots, 5$, and $w=(x_{\beta_{24},1}^-)^{r_1}(x_{\beta_{26},1}^-)^{r_2} (x_{\beta_{28},1}^-)^{r_3} (x_{\beta_{29},1})^{r_4} (x_{\beta_{30},1}^-)^{r_5}v $. Then $w$ is a nonzero scalar multiple of
$$(x_6^-)^{r_5}(x_3^-)^{r_5+r_4}(x_4^-)^{r_5+r_4+r_3}(x_5^-)^{r_5+r_4+r_3+r_2} (x_{\beta_{24},1}^-)^{r_1+r_2+r_3+r_4+r_5}v.$$
Moreover, $w$ is nonzero iff $r_1+\cdots+r_5\le m_2$. In particular, $R(2,m_2,1)=R'(2,m_2,1)$.
\end{lem}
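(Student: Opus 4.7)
The strategy is to prove the explicit identity by four iterated applications of Lemma \ref{l:heis}(b), and then deduce the non-vanishing criterion by combining an $\lie{sl}_2$-highest weight argument for the ``only if'' direction with a tensor product computation for the ``if'' direction. The four three-dimensional Heisenberg subalgebras of $\lie g[t]$ at play are
\[
\{x_6^-, x_{\beta_{29},1}^-, x_{\beta_{30},1}^-\},\quad \{x_3^-, x_{\beta_{28},1}^-, x_{\beta_{29},1}^-\},\quad \{x_4^-, x_{\beta_{26},1}^-, x_{\beta_{28},1}^-\},\quad \{x_5^-, x_{\beta_{24},1}^-, x_{\beta_{26},1}^-\}.
\]
Inspection of Table 1 confirms the Heisenberg structure: the consecutive differences are the simple roots $\beta_{30}-\beta_{29}=\alpha_6$, $\beta_{29}-\beta_{28}=\alpha_3$, $\beta_{28}-\beta_{26}=\alpha_4$, $\beta_{26}-\beta_{24}=\alpha_5$, while in each triple the sum of the two loop-graded elements exceeds the highest root.

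Working from the inside out, the first step applies Lemma \ref{l:heis}(b) with $x_6^- v=0$ (since $\lambda(h_6)=0$) to rewrite $(x_{\beta_{29},1}^-)^{r_4}(x_{\beta_{30},1}^-)^{r_5}v$ as a nonzero scalar multiple (the positive rational $\tfrac{r_4!}{(r_4+r_5)!}$, obtained by iterating the formula $y^s z v=\tfrac{1}{s+1}xy^{s+1}v$) of $(x_6^-)^{r_5}(x_{\beta_{29},1}^-)^{r_4+r_5}v$. I would then pull $(x_6^-)^{r_5}$ to the far left using the commutations $[x_6^-, x_{\beta_j,1}^-]=0$ for $j\in\{24,26,28\}$ (none of $\beta_j+\alpha_6$ being a root by Table 1). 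Iterating this scheme with the remaining three Heisenberg triples produces the claimed identity with overall scalar equal to a product of four positive rationals of the form $\tfrac{s!}{(s+k)!}$. The crucial organizational point is that the final ordering $(x_6^-)(x_3^-)(x_4^-)(x_5^-)(x_{\beta_{24},1}^-)$ is tailored so that at no stage does one need to commute $x_4^-$ past $x_3^-$ or $x_5^-$ past $x_4^-$ (which would fail since these are adjacent in the Dynkin diagram), whereas the required commutations $[x_3^-, x_{\beta_{24},1}^-]=[x_3^-, x_{\beta_{26},1}^-]=0$ and $[x_4^-, x_{\beta_{24},1}^-]=0$ all hold by Table 1.

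Setting $N=r_1+\cdots+r_5$, the identity reduces the non-vanishing of $w$ to that of the final form. For the ``only if'' direction, I would show $(x_{\beta_{24},1}^-)^N v=0$ whenever $N>m_2$: since $[x_2^+,x_{\beta_{24},1}^-]=\pm x_{\beta_{21},1}^-$, $x_2^+ v=x_{\beta_{21},1}^- v=0$ (the latter because $\beta_{21}\in R'(2,m_2,1)$), and $[x_{\beta_{21},1}^-, x_{\beta_{24},1}^-]=0$ (the sum being too large to be a root), one checks that $(x_{\beta_{24},1}^-)^N v$ is $x_2^+$-annihilated of $h_2$-weight $m_2-N<0$; since $M(\lambda)$ is finite-dimensional, such a vector must vanish. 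The ``if'' direction is the main obstacle. For $N\le m_2$, I would verify non-vanishing in the quotient $T(\lambda)\subseteq M(\omega_2)^{\otimes m_2}$ from Subsection \ref{ss:kr}, using the decomposition $M(\omega_2)\cong V(\omega_2)\oplus V(\omega_5)$. By the Leibniz rule for $(x_{\beta_{24},1}^-)^N$ on $w^{\otimes m_2}$, together with $(x_{\beta_{24},1}^-)^2w=0$, this vector equals $N!$ times the symmetric sum of tensor products having $\tilde w:=x_{\beta_{24},1}^- w\in V(\omega_5)$ at exactly $N$ positions and $w$ elsewhere, which is nonzero. Applying the subsequent simple-root monomial amounts, on each active position, to traversing a prefix of the chain $\tilde w\to x_5^-\tilde w\to x_4^- x_5^-\tilde w\to x_3^- x_4^- x_5^-\tilde w\to x_6^- x_3^- x_4^- x_5^-\tilde w$ inside $V(\omega_5)$, each step nonzero by $\lie{sl}_2$-theory applied to the relevant simple root (the weights along the chain are $\omega_5,\omega_4-\omega_5,\omega_3-\omega_4,\omega_2-\omega_3+\omega_6,\omega_2-\omega_6$, all Weyl-conjugate to $\omega_5$). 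The monotonicity $r_5\le r_4+r_5\le r_3+r_4+r_5\le r_2+r_3+r_4+r_5\le N$ guarantees enough active factors at each stage for the combinatorial distribution of the operators to yield a nonzero result.

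Finally, the ``in particular'' assertion is immediate by specializing to $r_k=\delta_{kj}$ for each $j\in\{1,\ldots,5\}$: the identity exhibits each of $x_{\beta_{24},1}^- v,x_{\beta_{26},1}^- v,x_{\beta_{28},1}^- v,x_{\beta_{29},1}^- v,x_{\beta_{30},1}^- v$ as a nonzero scalar multiple of an $\lie n^-$-monomial applied to $x_{\beta_{24},1}^- v$, which is nonzero for $m_2\ge 1$, so none of $\beta_{24},\beta_{26},\beta_{28},\beta_{29},\beta_{30}$ lies in $R(2,m_2,1)$.
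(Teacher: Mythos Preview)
Your proof is correct and follows essentially the same approach as the paper: the identity is obtained via the same four successive applications of Lemma~\ref{l:heis}(b) with the same Heisenberg triples in the same order, and the nonvanishing criterion ultimately rests on passing to $T(\lambda)\subseteq M(\omega_2)^{\otimes m_2}$.

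One simplification worth noting: for the ``if'' direction of the nonvanishing, once you know $(x_{\beta_{24},1}^-)^N v\ne 0$ for $N\le m_2$, the paper avoids any further tensor-product bookkeeping by arguing intrinsically in $M(\lambda)$ via repeated $\lie{sl}_2$-theory. Since $[x_5^+,x_{\beta_{24},1}^-]=0$ and $h_5(x_{\beta_{24},1}^-)^N v = N(x_{\beta_{24},1}^-)^N v$, one gets $(x_5^-)^s(x_{\beta_{24},1}^-)^N v\ne 0$ for all $s\le N$; then $[x_4^+,x_5^-]=0$ and the $h_4$-eigenvalue give the next step, and so on. This replaces your chain/distribution argument in $V(\omega_5)^{\otimes N}$ with four one-line $\lie{sl}_2$ checks. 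Your $x_2^+$-based argument for the ``only if'' direction is a clean alternative to the paper's (which simply cites the earlier observation that $(x_{\beta_{24},1}^-)^r w^{\otimes m_2}=0$ for $r>m_2$).
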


\begin{proof}
The last statement follows immediately from the second. The first statement follows from straightforward successive applications of Lemma \ref{l:heis}. Namely, we first consider the Heisenberg subalgebra generated by $x=x_6^-, y=x_{\beta_{29},1}^-$, and $z=x_{\beta_{30},1}^-$ together with the relation $xv=0$ to get
$$(x_{\beta_{29},1}^-)^{r_4}(x_{\beta_{30},1}^-)^{r_5}v = \eta(x_6^-)^{r_5}(x_{\beta_{29},1}^-)^{r_4+r_5}v$$
for some nonzero scalar $\eta$. Since $[x_6^-,x_{\beta_j,1}^-]=0$ for $j=24,26,28$, it follows that
$$(x_{\beta_{24},1}^-)^{r_1}(x_{\beta_{26},1}^-)^{r_2} (x_{\beta_{28},1}^-)^{r_3} (x_{\beta_{29},1})^{r_4} (x_{\beta_{30},1}^-)^{r_5}v = \eta(x_6^-)^{r_5}(x_{\beta_{24},1}^-)^{r_1}(x_{\beta_{26},1}^-)^{r_2} (x_{\beta_{28},1}^-)^{r_3} (x_{\beta_{29},1})^{r_4+r_5}v.$$
By similarly considering the subalgebras generated by $\{x_3^-,x_{\beta_{28},1}^-, x_{\beta_{29},1}^-\}$, $\{x_4^-,x_{\beta_{26},1}^-, x_{\beta_{28},1}^-\}$, and $\{x_5^-,x_{\beta_{24},1}^-, x_{\beta_{26},1}^-\}$ in this order, the first statement follows.

We have seen above that $(x_{\beta_{24},1}^-)^{r}w^{\otimes m_2}\ne 0$ iff $r\le m_2$. This implies $(x_{\beta_{24},1}^-)^{r_1+r_2+r_3+r_4+r_5}v\ne 0$ iff $r_1+\cdots+r_5\le m_2$. Since $x_5^+(x_{\beta_{24},1}^-)^rv = (x_{\beta_{24},1}^-)^rx_5^+v=0$ and $h_5(x_{\beta_{24},1}^-)^rv = rv$, it follows that $(x_5^-)^s(x_{\beta_{24},1}^-)^rv\ne 0$ for all $0\le s\le r$. In particular, $(x_5^-)^{r_5+r_4+r_3+r_2} (x_{\beta_{24},1}^-)^{r_1+r_2+r_3+r_4+r_5}v\ne 0$. The proof is completed proceeding similarly.
\end{proof}

\subsubsection{} The case $i=4$ is obtained from the previous case by using the nontrivial Dynkin diagram automorphism of $\lie g$. In particular we have:

\begin{lem}\label{l:coord4}
Let $r_j\in\mathbb Z_{\ge 0}, j=1,\dots, 5$, and $w=(x_{\beta_{25},1}^-)^{r_1}(x_{\beta_{27},1}^-)^{r_2} (x_{\beta_{28},1}^-)^{r_3} (x_{\beta_{29},1})^{r_4} (x_{\beta_{30},1}^-)^{r_5}v$. Then $w$ is a nonzero scalar multiple of
$$(x_6^-)^{r_5}(x_3^-)^{r_5+r_4}(x_2^-)^{r_5+r_4+r_3}(x_1^-)^{r_5+r_4+r_3+r_2} (x_{\beta_{25},1}^-)^{r_1+r_2+r_3+r_4+r_5}v.$$
Moreover, $w$ is nonzero iff $r_1+\cdots+r_5\le m_4$. In particular, $R(4,m_4,1)=R'(4,m_4,1)$.\hfill\qedsymbol
\end{lem}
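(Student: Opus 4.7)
The strategy is to invoke the nontrivial Dynkin diagram automorphism of $E_6$, exactly as the author hints. Let $\sigma:I\to I$ be the involution fixing $3$ and $6$ and swapping $1\leftrightarrow 5$ and $2\leftrightarrow 4$. This induces a Lie algebra automorphism of $\lie g$, still denoted by $\sigma$, sending $x_i^\pm\mapsto x_{\sigma(i)}^\pm$ and $h_i\mapsto h_{\sigma(i)}$, and it extends coefficientwise to an automorphism of $\lie g[t]$ fixing the grading. A direct inspection of Table 1 shows that $\sigma$ acts on the relevant positive roots by $\beta_{24}\leftrightarrow\beta_{25}$, $\beta_{26}\leftrightarrow\beta_{27}$, and fixes $\beta_{28},\beta_{29},\beta_{30}$. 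After suitably rescaling root vectors (which only alters the eventual scalar multiples), $\sigma(x_{\beta_{24},1}^-)=x_{\beta_{25},1}^-$, $\sigma(x_{\beta_{26},1}^-)=x_{\beta_{27},1}^-$, and $\sigma$ fixes $x_{\beta_{28},1}^-$, $x_{\beta_{29},1}^-$, $x_{\beta_{30},1}^-$, $x_3^-$, $x_6^-$, while sending $x_5^-\mapsto x_1^-$ and $x_4^-\mapsto x_2^-$.

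Next I would observe that $\sigma$ carries the defining relations of $M(m_4\omega_4)$ (from Definition \ref{d:N}, using Lemma \ref{l:coord}) bijectively to those of $M(m_2\omega_2)$ with the parameter renamed $m_2=m_4$: the relations $\lie n^+[t]v$, $\lie h\otimes t\mathbb C[t]v$, and $(x_{\alpha_i}^-)^{\lambda(h_i)+1}v$ are permuted among themselves, and the vanishing ideal $R(4,m_4,r)$ is exchanged with $R(2,m_4,r)$ since it is determined by the coordinates of each root in the simple root basis, which $\sigma$ only relabels. Therefore pullback by $\sigma$ gives an isomorphism of graded $\lie g[t]$-modules $M(m_4\omega_4)\cong \sigma^*M(m_2\omega_2)$ sending the highest weight vector to the highest weight vector.

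Applying Lemma \ref{l:coord2} to $M(m_2\omega_2)$ and transporting through $\sigma^{-1}$, the vector
\[
w=(x_{\beta_{25},1}^-)^{r_1}(x_{\beta_{27},1}^-)^{r_2}(x_{\beta_{28},1}^-)^{r_3}(x_{\beta_{29},1}^-)^{r_4}(x_{\beta_{30},1}^-)^{r_5}v
\]
is a nonzero scalar multiple of the image under $\sigma^{-1}$ of
\[
(x_6^-)^{r_5}(x_3^-)^{r_5+r_4}(x_4^-)^{r_5+r_4+r_3}(x_5^-)^{r_5+r_4+r_3+r_2}(x_{\beta_{24},1}^-)^{r_1+r_2+r_3+r_4+r_5}v,
\]
which is exactly the claimed expression in the $i=4$ case. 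The nonvanishing criterion $r_1+\cdots+r_5\le m_4$ is immediate by transport, and the final assertion $R(4,m_4,1)=R'(4,m_4,1)$ follows by taking $(r_1,\dots,r_5)=(1,0,0,0,0)$ and its permutations, just as in Lemma \ref{l:coord2}.

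No serious obstacle is expected: the only point requiring care is checking that the automorphism interacts correctly with the chosen normalizations of root vectors $x_\beta^-$, so that the scalars produced by the successive applications of Lemma \ref{l:heis} in the transported proof remain nonzero; this is handled, as in the proof of Lemma \ref{l:coord2}, because each scalar arises from a Heisenberg triple inside $\lie g[t]$ whose commutator relations are preserved under $\sigma$ up to a nonzero rescaling.
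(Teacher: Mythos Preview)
Your proposal is correct and follows exactly the paper's own approach: the paper simply remarks that the case $i=4$ is obtained from Lemma~\ref{l:coord2} via the nontrivial Dynkin diagram automorphism and omits further details. You have faithfully supplied those details, including the action of $\sigma$ on the relevant roots and the transport of the defining relations of $M(m_4\omega_4)$ to those of $M(m_2\omega_2)$.
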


\subsection{Lower bounds}\label{ss:lb} We now complete the proof of \eqref{e:lb} for $\lambda$ as in Theorem \ref{t:main}. In fact, we will carry out most of the proof assuming only that $\lambda(h_3)=0$. Recall the notation $\gbr x_{\gbr r},\gbr r\in\cal A$, developed in Section \ref{ss:ub}. In addition, we shall use the following notation. Denote by $v_{i,m_i}$ a nonzero vector in $M(m_i\omega_i)_{m_i\omega_i}$ and by $v_{i,m_i}^s$ the image of $v_{i,m_i}$ in $M(m_i\omega_i)(s)$. By definition of the truncated module  $M(m_i\omega_i)(s)$ we have
\begin{equation}\label{e:trvanish}
M(m_i\omega_i)(s)[r] =0 \qquad\text{if}\qquad r>s.
\end{equation}
Given $\gb s=(s_i)_{i\in I}\in\mathbb Z_{\ge 0}^I$,  let $T_{\gb s}(\lambda)$ be the submodule of $\otm_{i\in I}^{} M(m_i\omega_i)(s_i)$ generated by $v_{_\gb s}:=\otm_{i\in I}^{} v_{i,m_i}^{s_i}$. Since $T(\lambda)$ is the submodule of $\otm_{i\in I}^{} M(m_i\omega_i)$ generated by $v:= \otm_{i\in I}^{} v_{i,m_i}$,  there exists a unique epimorphism from $T(\lambda)$ onto  $T_{\gb s}(\lambda)$  such that $v\mapsto v_{_\gb s}$.
Let $t_{\mu,r}^{\gb s}$ denote the multiplicity of $V(\mu)$ as an irreducible constituent of $T_\gb s(\lambda)[r]$. Observe that, since $|\cal A_{\wt(\gbr r),\gr(\gbr r)}|= 1$ for all $\gbr r\in\cal A$, in order to prove \eqref{e:lb}, it suffices to prove that
\begin{equation}\label{e:lbtr}
\text{for each}\quad\gbr r\in\cal A\quad\text{there exists}\quad \gb s\in\mathbb Z_{\ge 0}^I\quad\text{such that}\quad t_{\wt(\gbr r),\gr(\gbr r)}^{\gb s}\ge 1.
\end{equation}
It will be convenient to write the tensor product $\otm_{i\in I}^{} M(m_i\omega_i)$ in the following order: $M(m_2\omega_2)\otimes M(m_4\omega_4)\otimes M(m_6\omega_6)\otimes  M(m_1\omega_1)\otimes M(m_5\omega_5)$, where we already used that $m_3=0$ and, hence, $M(m_3\omega_3)\cong V(0)\cong \mathbb C$. In particular, $v=v_{2,m_2}\otimes v_{4,m_4}\otimes v_{6,m_6}\otimes v_{1,m_1}\otimes v_{5,m_5}$ and similarly for $v_{_\gb s},\gb s\in \mathbb Z_{\ge 0}^I$. To shorten notation we write $w= v_{1,m_1}\otimes v_{5,m_5}$  when convenient so that
$$v=v_{2,m_2}\otimes v_{4,m_4}\otimes v_{6,m_6}\otimes w.$$

Let $\{\gbr e_j:j=1,\dots,6\}$ be the canonical basis of $\mathbb Z_{\ge 0}^6$. Given $r\in\mathbb Z$, set $\mathbb Z^6[r]=\{\gbr r\in\mathbb Z^6:\gr(\gbr r)=r\}$, and observe that $\mathbb Z^6[0]$ is a free $\mathbb Z$-module having $\gbr b:=\{(\gb e_1-\gb e_5), (\gb e_2-\gb e_5), (\gb e_5-\gb e_3), (\gb e_5-\gb e_4), (\gb e_5-\gb e_6)\}$ as an ordered $\mathbb Z$-basis. Define $\gbr b_j\in \gbr b, j=1,\dots,5$, by requiring that $\gbr b=\{\gbr b_1,\dots,\gbr b_5\}$ as an ordered set.
Clearly, $\gbr r, \gbr r'\in\mathbb Z^6[r]$ iff $\gbr r-\gbr r'\in\mathbb Z^6[0]$.
Given $\gb j=(j_1,j_2,j_3,j_4,j_5)\in\mathbb Z^5$ and $\gb s\in\mathbb Z_{\ge 0}^I$ such that $s_2\le m_2,s_4\le m_4, s_6\le m_6$, observe that $\gbr r_{_\gbr o}=(s_2,s_4,0,0,0,s_6)\in\cal A_{s_2+s_4+s_6}$ and set
\begin{equation*}
\gbr r_{_\gb j}=\gbr r_{\gbr o} - \sum_{l=1}^5 j_l\gbr b_l = (s_2-j_1,s_4-j_2,j_3,j_4,j_1+j_2-j_3-j_4-j_5,s_6+j_5).
\end{equation*}
Thus, $\gbr r\in\mathbb Z^6[s_2+s_4+s_6]$ iff $\gbr r=\gbr r_{_\gb j}$ for some $\gb j\in\mathbb Z^5$. For shortening some expressions, given $\gb j\in\mathbb Z^5$, we may use the notation $j_0=j_1+j_2-j_3-j_4-j_5$. Notice that $\gbr r_{_\gb j}\in \cal A$  iff
\begin{gather}\notag
\qquad 0\le j_3\le m_5, \qquad 0\le j_4\le m_1, \qquad j_1\le s_2, \qquad j_2\le s_4, \qquad j_0\ge 0,\\ \label{e:B(s)A}\\ \notag
j_5\le m_6-s_6, \qquad j_1-j_3-j_5\le m_4-s_4, \qquad j_2-j_4-j_5\le m_2-s_2.
\end{gather}
Set
$$\cal A(\gb s) = \{\gbr r\in\cal A:  \gbr x_{_\gbr r}v_{_\gb s}\ne 0\}\cap \mathbb Z^6[s_2+s_4+s_6]$$
and let $\cal B(\gb s)$ be the set of tuples  $\gb j\in\mathbb Z_{\ge 0}^5$ satisfying
\begin{gather}\notag
j_3\le j_1\le s_2, \qquad j_4\le j_2\le s_4, \qquad j_3\le m_5, \qquad j_4\le m_1, \qquad j_0\ge 0,\\ \label{e:B(s)}\\ \notag
j_5\le m_6-s_6, \qquad j_1-j_3-j_5\le m_4-s_4, \qquad j_2-j_4-j_5\le m_2-s_2.
\end{gather}
In Subsection \ref{ss:AsBs} we will show that
\begin{equation}\label{e:AsBs}
\gbr r_{_\gb j}\in \cal A(\gb s) \qquad\Leftrightarrow\qquad \gb j\in\cal B(\gb s).
\end{equation}

It follows from \eqref{e:AsBs} that
\begin{equation}\label{e:Ts2s4}
T_{\gb s}(\lambda)[s_2+s_4+s_6] = \sum_{\gb j\in\cal B(\gb s)} U(\lie n^-)\gbr x_{_{\gb r_{_\gb j}}}v_{_\gb s}.
\end{equation}
For $\gb j,\gb k\in\cal B(\gb s)$ we have
\begin{align}\notag
\wt(\gbr r_{_\gb j})-\wt(\gbr r_{_\gb k}) &=\ (k_2-j_2)\alpha_1+(k_1-j_1)\alpha_5+(k_5-j_5)(\alpha_3+\alpha_6)+\\ \label{e:lbwt}\\ \notag
&\ +(k_2-j_2+j_4-k_4)\alpha_2+(k_1-j_1+j_3-k_3)\alpha_4.
\end{align}
In particular, $\wt(\gbr r_{_\gbr o})$ is the unique maximal weight of $T_{\gb s}(\lambda)[s_2+s_4+s_6]$ and, hence,
\begin{equation}\label{e:lbtr0}
t_{\wt(\gbr r_{_\gbr o}),s_2+s_4+s_6}^{\gb s}\ge 1.
\end{equation}

\begin{lem}\label{l:lbtrj50}
Let $\gbr r\in\cal A$. Then, there exists $\gb s\in\mathbb Z_{\ge 0}^I$ and $\gb j\in\cal B(\gb s)$ such that $j_5=0$ and $\gbr r=\gbr r_{_\gb j}$.
In particular, $\gbr r\in\cal A(\gb s)$.
\end{lem}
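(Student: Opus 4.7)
The plan is to solve the equation $\gbr r = \gbr r_{_\gb j}$ with $j_5=0$ directly, which pins down everything except a one-parameter freedom in $(s_2,s_4)$, and then use the defining inequalities of $\cal A$ to pick a valid value of $(s_2,s_4)$. Setting $j_5=0$ in the formula
\[
\gbr r_{_\gb j}=(s_2-j_1,\,s_4-j_2,\,j_3,\,j_4,\,j_1+j_2-j_3-j_4-j_5,\,s_6+j_5)
\]
forces $s_6=r_6$, $j_3=r_3$, $j_4=r_4$, $j_1=s_2-r_1$, $j_2=s_4-r_2$, and by reading off the fifth coordinate, $s_2+s_4=N$ where $N:=r_1+r_2+r_3+r_4+r_5$. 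Since $\gbr r\in\cal A$ gives $r_6\le m_6$, the condition $s_6\le m_6$ is automatic.

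With these substitutions, the eight conditions in \eqref{e:B(s)} become: $s_2\ge r_1+r_3$ (so that $j_1\ge j_3$), $s_4\ge r_2+r_4$ (so that $j_2\ge j_4$), $s_2\le m_2$, $s_4\le m_4$, $r_3\le m_5$, $r_4\le m_1$, $r_5\ge 0$ (this is $j_0\ge 0$), and the two mixed inequalities which after substitution read exactly $r_1+r_3+r_5\le m_2$ and $r_2+r_4+r_5\le m_4$. All of the latter five conditions hold by virtue of $\gbr r\in\cal A$, so the only remaining task is to produce $(s_2,s_4)$ with $s_2+s_4=N$ satisfying the first four bounds.

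The natural choice is $s_2:=r_1+r_3$ and $s_4:=r_2+r_4+r_5$ (with $s_6:=r_6$), giving $j_1=r_3$, $j_2=r_4+r_5$, $j_3=r_3$, $j_4=r_4$, $j_5=0$. The lower bounds $s_2\ge r_1+r_3$ and $s_4\ge r_2+r_4$ hold by construction, while the upper bounds $s_2\le m_2$ and $s_4\le m_4$ follow from $r_1+r_3\le r_1+r_3+r_5\le m_2$ and $r_2+r_4+r_5\le m_4$, both supplied by $\gbr r\in\cal A$. Hence $\gb j\in\cal B(\gb s)$ with $j_5=0$ and $\gbr r_{_\gb j}=\gbr r$. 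The final assertion $\gbr r\in\cal A(\gb s)$ is then immediate from the equivalence \eqref{e:AsBs}.

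There is no real obstacle here: the lemma is essentially a bookkeeping check, and the point to verify carefully is only that the inequalities defining $\cal A$ are tight enough to guarantee that the explicit choice of $(s_2,s_4)$ lies in the correct window. All other conditions in \eqref{e:B(s)} are either forced by the assignment $j_5=0,\ j_3=r_3,\ j_4=r_4$ or inherited term-by-term from membership in $\cal A$.
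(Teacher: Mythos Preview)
Your proof is correct and follows the same approach as the paper: make an explicit choice of $\gb s$ and $\gb j$ and verify the inequalities in \eqref{e:B(s)} using membership in $\cal A$. The only difference is cosmetic: the paper places the ``extra'' $r_5$ into $s_2$ (taking $s_2=r_1+r_3+r_5$, $s_4=r_2+r_4$, $\gb j=(r_3+r_5,r_4,r_3,r_4,0)$) whereas you place it into $s_4$; both choices lie in the one-parameter window you identified and the verifications are symmetric.
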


\begin{proof}
Let $s_1=s_3=s_5=0, s_2=r_1+r_3+r_5, s_4=r_2+r_4$, and $s_6=r_6$. As before, set $\gbr r_{_\gbr o}=(s_2,s_4,0,0,0,s_6)$ and notice that $\gbr r_{_\gbr o}\in\cal A$. One easily checks that $\gbr r=\gbr r_{\gb j}$ where $\gb j=(r_3+r_5,r_4,r_3,r_4,0)$. By \eqref{e:AsBs}, $\gb r\in\cal A(\gb s)$ iff $\gb j\in\cal B(\gb s)$. The checking of the latter is straightforward.
\end{proof}

The above lemma shows that it suffices to show \eqref{e:lbtr} in the case that $\gbr r=\gbr r_{_\gb j}$ for some $\gb s\in\mathbb Z_{\ge 0}^I$ and $\gb j\in\cal B(\gb s)$ such that $j_5=0$. In this case, it follows from the proof of \eqref{e:AsBs} (see the last line of Subsection \ref{ss:AsBs}) that $\gbr x_{_{\gbr r_{_\gb j}}} v_{_\gb s}$ is a nonzero scalar multiple of
\begin{align*}
v_{_\gb j}:=(x^-_{\beta_{28},1})^{j_1-j_3} (x^-_{\beta_{26},1})^{j_3}(x^-_{\beta_{24},1})^{s_2-j_1}v_{2,m_2}^{s_2}\otimes (x^-_{\beta_{28},1})^{j_2-j_4}(x^-_{\beta_{27},1})^{j_4}(x^-_{\beta_{25},1})^{s_4-j_2}v_{4,m_4}^{s_4}\otimes w'
\end{align*}
where $w'=(x^-_{\beta_{30},1})^{s_6}v_{6,m_6}^{s_6}\otimes w$. Notice $v_{_\gb j}\ne 0$ since $\gb j\in\cal B(\gb s)$. From now on we fix $\gb s\in\mathbb Z_{\ge 0}^I$, write $\cal B=\cal B(\gb s)$, and set
$$\cal B_0 = \{\gb j\in\cal B(\gb s): j_5=0\}.$$
Given $\gb k\in\cal B$, let
$$\cal B_{\gb k}^+=\{\gb j\in\cal B(\gb s): \wt(\gbr r_{_\gb k})< \wt(\gbr r_{_\gb j})\} \qquad\text{and}\qquad\cal B_\gb k=\cal B_\gb k^+\cup\{\gb k\}.$$
It easily follows from \eqref{e:lbwt} that
\begin{equation}\label{e:lbwt0}
\gb k\in\cal B_0 \quad\Rightarrow\quad \cal B_\gb k\subseteq \cal B_0.
\end{equation}

By Lemma \ref{l:hwvecs}, \eqref{e:Ts2s4}, and the injectivity of $\wt:\cal A\to P$, \eqref{e:lbtr} holds for $\gbr r=\gbr r_{\gb k}$ iff
\begin{equation}\label{e:lbtrv}
v_{_\gb k} \notin V_\gb k^+:=\sum_{\gb j\in\cal B_{\gb k}^+} U(\lie n^-)v_{_\gb j}.
\end{equation}
Equivalently,  \eqref{e:lbtr} holds for $\gbr r=\gbr r_{\gb k}$ iff  we have an isomorphism of $\lie g$-modules
\begin{equation}\label{e:lb1245}
V_\gb k:=\sum_{\gb j\in\cal B_\gb k} U(\lie n^-) v_{_\gb j} \cong \bigoplus_{\gb j\in\cal B_\gb k}^{} V(\wt(\gbr r_{_\gb j})).
\end{equation}
Given $\gb j\in\cal B_0$, define the height of $\gb j$ to be
\begin{equation*}
\het(\gb j) = \het(\wt(\gbr r_{_\gbr o })-\wt(\gbr r_{_{\gb j}})) = 2(j_1+j_2)-(j_3+j_4)=j_1+j_2+j_0.
\end{equation*}
We prove \eqref{e:lb1245} by induction on $k=\het(\gb k)$. Equation \eqref{e:lbtr0} implies that \eqref{e:lb1245} holds for $k=0$. Thus, assume $k>0$ and, by induction hypothesis, that \eqref{e:lb1245} holds for $\gb j\in\cal B_0$ such that $\het(\gb j)<k$. It follows from the induction hypothesis and \eqref{e:lbwt0} that
\begin{equation}\label{e:lb1245ind}
\dim((V_\gb k^+)_{\wt(\gbr r_{_\gb k})})  = \sum_{\gb j\in\cal B_{\gb k}^+} \dim(V(\wt(\gbr r_{_\gb j}))_{\wt(\gbr r_{_\gb k})}).
\end{equation}
We are left to show that
\begin{equation}\label{e:lb1245indc}
\dim((V_\gb k)_{\wt(\gbr r_{_\gb k})}) =  \dim((V_\gb k^+)_{\wt(\gbr r_{\gb k})})+1.
\end{equation}

Let $J_-=\{1,2\}, J_+=\{4,5\}, J=J_-\cup J_+\subseteq I$ so that $\lie g_{_{J_\pm}}\cong \lie{sl}_3$  and $\lie g_{_{J}}\cong \lie{sl}_3\oplus\lie{sl}_3$. By Proposition \ref{p:restriction}, $U(\lie g_{_{J_\pm }})V(\wt(\gbr r_{_\gb j}))_{\wt(\gbr r_{_\gb j})}\cong V(\wt(\gbr r_{_\gb j})_{J_\pm})$ and similarly for $J$ in place of $J_\pm$. Moreover, we have isomorphisms of vector spaces
\begin{equation}
V(\wt(\gbr r_{_\gb j}))_{\wt(\gbr r_{_\gb k})}\cong V(\wt(\gbr r_{_\gb j})_{J})_{\wt(\gbr r_{_\gb k})_{J}} \cong V(\wt(\gbr r_{_\gb j})_{J_-})_{\wt(\gbr r_{_\gb k})_{J_-}}\otimes V(\wt(\gbr r_{_\gb j})_{J_+})_{\wt(\gbr r_{_\gb k})_{J_+}}.
\end{equation}
The first isomorphism above is clear and the second follows from Proposition \ref{p:sumofalgs}. If $\gb j\in\cal B_\gb k$, it easily follows from \eqref{e:lbwt} that
\begin{gather*}
k_2-j_2\le \wt(\gbr r_{_\gb j})(h_1) = m_1+s_4-j_2-j_4, \qquad k_2-j_2+j_4-k_4\le \wt(\gbr r_{_\gb j})(h_2) = m_2-s_2-j_2+2j_4,\\
k_1-j_1\le \wt(\gbr r_{_\gb j})(h_5) = m_5+s_2-j_1-j_3, \qquad k_1-j_1+j_3-k_3\le \wt(\gbr r_{_\gb j})(h_4) = m_4-s_4-j_1+2j_3,
\end{gather*}
Hence, we can use Lemma \ref{l:sl3} to compute
\begin{gather}\notag
\dim(V(\wt(\gbr r_{_\gb j})_{J_-})_{\wt(\gbr r_{_\gb k})_{J_-}}) = \min\{k_2-j_2,k_2-j_2+j_4-k_4\}+1\\ \text{and}\\ \notag \dim(V(\wt(\gbr r_{_\gb j})_{J_+})_{\wt(\gbr r_{_\gb k})_{J_+}}) = \min\{k_1-j_1,k_1-j_1+j_3-k_3\}+1.
\end{gather}
Plugging this in \eqref{e:lb1245ind} we get
\begin{equation}\label{e:lb1245indim}
\dim((V_\gb k^+)_{\wt(\gbr r_{_\gb k})}) =  \sum_{\gb j\in\cal B_{\gb k}^+} (\min\{k_2-j_2,k_2-j_2+j_4-k_4\}+1) (\min\{k_1-j_1,k_1-j_1+j_3-k_3\}+1).
\end{equation}

We will need the following notation. Given, $i_1,i_2,\dots,i_l\in I$, and $a_1,\dots, a_l\in\mathbb Z_{\ge 0}$, set
\begin{equation*}
\gbr x_{i_1,\dots,i_l}^{a_1,\dots,a_l} = (x_{i_1}^-)^{(a_1)}\cdots (x_{i_l}^-)^{(a_l)}.
\end{equation*}
Also, given $\gb j\in\cal B_{\gb k}$, set
\begin{gather*}
l_-(\gb j) =  \min\{k_2-j_2,k_2-j_2+j_4-k_4\}, \quad l_+(\gb j) =  \min\{k_1-j_1,k_1-j_1+j_3-k_3\}
\end{gather*}
so that \eqref{e:lb1245indc} can be rewritten as
\begin{equation}\label{e:lb1245indc'}
\dim((V_\gb k)_{\wt(\gbr r_{_\gb k})}) =  \sum_{\gb j\in\cal B_{\gb k}} (l_-(\gb j)+1) (l_+(\gb j)+1).
\end{equation}
It now follows from Lemma \ref{l:basis} and \eqref{e:Ts2s4} that \eqref{e:lb1245indc'} holds iff the vectors
\begin{equation}\label{e:1245basis}
\gbr x_{5,4,5}^{p_5,p_4(\gb j),k_1-j_1-p_5}\gbr x_{1,2,1}^{p_1,p_2(\gb j),k_2-j_2-p_1}v_\gb j \qquad\text{are linearly independent}
\end{equation}
for $\gb j\in\cal B_\gb k, 0\le p_1\le l_-(\gb j),0\le p_5\le l_+(\gb j)$. Here $p_2(\gb j)=k_2-j_2+j_4-k_4$ and $p_4(\gb j)=k_1-j_1+j_3-k_3$. We will prove \eqref{e:1245basis} only for $\lambda$ as in Theorem \ref{t:main}. However, let us develop for a little longer the general case. In particular, we will show that all the vectors in \eqref{e:1245basis} are nonzero.

Set $\gb p=(p_1,p_5), \gbr x_{_{\gb j,\gb p}} = \gbr x_{5,4,5}^{p_5,p_4(\gb j),k_1-j_1-p_5}\gbr x_{1,2,1}^{p_1,p_2(\gb j),k_2-j_2-p_1}$, and $v_{_{\gb j,\gb p}}=\gbr x_{_{\gb j,\gb p}}v_{_\gb j}$. Thus, we want to show that the vectors $v_{_{\gb j,\gb p}}$ are linearly independent for $\gb j$ and $\gb p$ as above. From now on, when now confusion arises, we simplify notation and write $l_-$ in place of $l_-(\gb j)$, etc. Recall that
\begin{equation*}
v_{_\gb j}=(x^-_{\beta_{28},1})^{j_1-j_3} (x^-_{\beta_{26},1})^{j_3}(x^-_{\beta_{24},1})^{s_2-j_1}v_{2,m_2}^{s_2}\otimes (x^-_{\beta_{28},1})^{j_2-j_4}(x^-_{\beta_{27},1})^{j_4}(x^-_{\beta_{25},1})^{s_4-j_2}v_{4,m_4}^{s_4}\otimes w'.
\end{equation*}
To simplify the expression above, set $v_6=(x^-_{\beta_{30},1})^{s_6}v_{6,m_6}^{s_6}$, $\gbr x_{_\gb j}^2=(x^-_{\beta_{28},1})^{j_1-j_3} (x^-_{\beta_{26},1})^{j_3}(x^-_{\beta_{24},1})^{s_2-j_1}$, and $\gbr x_{_\gb j}^4 = (x^-_{\beta_{28},1})^{j_2-j_4}(x^-_{\beta_{27},1})^{j_4}(x^-_{\beta_{25},1})^{s_4-j_2}$ so that
\begin{equation}
v_{_\gb j} = \gbr x_{_\gb j}^2v_{2,m_2}^{s_2}\otimes \gbr x_{_\gb j}^4v_{4,m_4}^{s_4}\otimes v_6\otimes w.
\end{equation}
Also, using Lemma \ref{l:coord2} we get
\begin{gather}\label{e:coordli}
\gbr x_{_\gb j}^2v_{2,m_2}^{s_2} = (x_4^-)^{j_1-j_3}(x_5^-)^{j_1}(x^-_{\beta_{24},1})^{s_2}v_{2,m_2}^{s_2} \quad\text{and}\quad \gbr x_{_\gb j}^4v_{4,m_4}^{s_4} = (x_2^-)^{j_2-j_4}(x_1^-)^{j_2}(x^-_{\beta_{25},1})^{s_4}v_{4,m_4}^{s_4}
\end{gather}
up to nonzero scalar multiples. By applying the comultiplication one sees that $v_{_{\gb j,\gb p}}$ is equal to
\begin{align}\label{e:1245licomult}
\sum_\chi \gbr x_{5,4,5}^{d_2,e_2,f_2}\gbr x_{1,2,1}^{a_2,b_2,c_2}\gbr x_{_\gb j}^2v_{2,m_2}^{s_2} \otimes \gbr x_{5,4,5}^{d_4,e_4,f_4}\gbr x_{1,2,1}^{a_4,b_4,c_4}\gbr x_{_\gb j}^4v_{4,m_4}^{s_4}\otimes v_6\otimes \gbr x_{1,2,1}^{a_1,b_1,c_1}v_{1,m_1}\otimes \gbr x_{5,4,5}^{d_5,e_5,f_5}v_{5,m_5}
\end{align}
where $\chi$ runs over the set of collections of nonnegative integers $a_l,b_l,c_l,d_l,e_l,f_l$ satisfying
\begin{gather}\notag
a_2+a_4+a_1=p_1, \qquad b_2+b_4+b_1=p_2, \qquad c_2+c_4+c_1=k_2-j_2-p_1,\\ \label{e:abcdef}\\ \notag
d_2+d_4+d_5=p_5, \qquad e_2+e_4+e_5=p_4, \qquad f_2+f_4+f_5=k_1-j_1-p_5.
\end{gather}
Above we also used that $\gbr x_{1,2,1}^{a,b,c}v_{5,m_5}=\gbr x_{5,4,5}^{a,b,c}v_{1,m_1}=\gbr x_{1,2,1}^{a,b,c}v_6=\gbr x_{5,4,5}^{a,b,c}v_6=0$ whenever $a+b+c>0$. We will need to study the summands on the right-hand-side of \eqref{e:1245licomult}.

Using Lemma \ref{l:sl3heis} we see that $\gbr x_{1,2,1}^{a_1,b_1,c_1}v_{1,m_1}\ne 0$ iff $a_1+c_1\le m_1$ and $b_1\le c_1$ and, in that case, $\gbr x_{1,2,1}^{a_1,b_1,c_1}v_{1,m_1}=\eta\gbr x_{2,1}^{b_1,a_1+c_1}v_{1,m_1}$ for some positive rational number $\eta$ (depending on $a_1,b_1,c_1$). Similarly, $\gbr x_{5,4,5}^{d_5,e_5,f_5}v_{5,m_5}\ne 0$ iff $d_5+f_5\le m_5$ and $e_5\le f_5$ and, in that case, $\gbr x_{5,4,5}^{d_5,e_5,f_5}v_{5,m_5}$ is a positive multiple of $\gbr x_{4,5}^{e_5,d_5+f_5}v_{5,m_5}$.
Next, we study the factor $\gbr x_{5,4,5}^{d_2,e_2,f_2}\gbr x_{1,2,1}^{a_2,b_2,c_2}\gbr x_{_\gb j}^2v_{2,m_2}^{s_2}=\gbr x_{1,2,1}^{a_2,b_2,c_2}\gbr x_{5,4,5}^{d_2,e_2,f_2}\gbr x_{_\gb j}^2v_{2,m_2}^{s_2}$. Notice that $x_5^+(x^-_{\beta_{24},1})^{s_2}v_{2,m_2}^{s_2}=h_4(x^-_{\beta_{24},1})^{s_2}v_{2,m_2}^{s_2}=0$, and $h_5(x^-_{\beta_{24},1})^{s_2}v_{2,m_2}^{s_2} = s_2$. Therefore, we can use Lemma \ref{l:sl3heis} together with \eqref{e:coordli} to see that $\gbr x_{5,4,5}^{d_2,e_2,f_2}\gbr x_{_\gb j}^2v_{2,m_2}^{s_2}$ is a nonnegative rational multiple of
\begin{equation*}
\gbr x_{4,5}^{e_2+j_1-j_3,j_1+f_2+d_2}(x_{\beta_{24},1}^-)^{s_2}v_{2,m_2}^{s_2}
\end{equation*}
and it is nonzero provided $e_2\le j_3+f_2$ and $j_1+d_2+f_2\le s_2$.  Since $d_2\le p_5, f_2\le k_1-j_1-p_5$ by \eqref{e:abcdef}, and $k_1\le s_2$, the latter is always satisfied. One easily checks that
$$x_1^+\gbr x_{_\gb j}^2v_{2,m_2}^{s_2}=h_1\gbr x_{_\gb j}^2v_{2,m_2}^{s_2}=0$$
which implies $\gbr x_{5,4,5}^{d_2,e_2,f_2}\gbr x_{1,2,1}^{a_2,b_2,c_2}\gbr x_{_\gb j}^2v_{2,m_2}^{s_2}=0$ if $c_2\ne 0$.
Next, using the relations
\begin{gather*}
[x_2^+, x_{\beta_{24},1}^-]=x_{\beta_{21},1}^-, \quad [x_{\beta_{21},1}^-, x_{\beta_{24},1}^-]=0, \quad x_{\beta_{21},1}^-v_{2,m_2}^{s_2}=0,
\end{gather*}
one sees that $x_2^+\gbr x_{_\gb j}^2v_{2,m_2}^{s_2}=0$. Since $h_2\gbr x_{_\gb j}^2v_{2,m_2}^{s_2}=(m_2-s_2)\gbr x_{_\gb j}^2v_{2,m_2}^{s_2}$, it follows from Lemma \ref{l:sl3heis} that
\begin{equation*}
\gbr x_{1,2,1}^{a_2,b_2,c_2}\gbr x_{_\gb j}^2v_{2,m_2}^{s_2}\ne 0 \quad\text{iff}\quad c_2=0 \quad\text{and}\quad a_2\le b_2\le m_2-s_2.
\end{equation*}
Since we anyway have $b_2\le p_2=(k_2-k_4)-(j_2-j_4)\le m_2-s_2$, the relevant conditions are $c_2=0$ and $a_2\le b_2$.
Therefore, we find that $\gbr x_{5,4,5}^{d_2,e_2,f_2}\gbr x_{1,2,1}^{a_2,b_2,c_2}\gbr x_{_\gb j}^2v_{2,m_2}^{s_2}$ is a nonnegative rational multiple of
$$\gbr x_{4,5}^{e_2+j_1-j_3,j_1+f_2+d_2}\gbr x_{1,2}^{a_2,b_2}(x_{\beta_{24},1}^-)^{s_2}v_{2,m_2}^{s_2}$$
which is nonzero iff
$$a_2\le b_2 \qquad\text{and}\qquad e_2\le j_3+f_2.$$
Similarly, we get that $\gbr x_{5,4,5}^{d_4,e_4,f_4}\gbr x_{1,2,1}^{a_4,b_4,c_4}\gbr x_{_\gb j}^4v_{4,m_4}^{s_4}$  is a nonnegative rational multiple of
$$\gbr x_{2,1}^{b_4+j_2-j_4,j_2+c_4+a_4}\gbr x_{5,4}^{d_4,e_4}(x_{\beta_{25},1}^-)^{s_4}v_{4,m_4}^{s_4}$$
which is nonzero iff
$$d_4\le e_4 \qquad\text{and}\qquad b_4\le j_4+c_4.$$
Therefore, the sum in \eqref{e:1245licomult} is a linear combination of the vectors
\begin{equation}\label{e:1245licomult'}
\gbr x_{4,5}^{e'_2,f'_2}\gbr x_{1,2}^{a_2,b_2}(x_{\beta_{24},1}^-)^{s_2}v_{2,m_2}^{s_2} \otimes \gbr x_{5,4}^{d_4,e_4}\gbr x_{2,1}^{b'_4,c'_4}(x_{\beta_{25},1}^-)^{s_4}v_{4,m_4}^{s_4}\otimes v_6\otimes \gbr x_{2,1}^{b_1,c'_1}v_{1,m_1}\otimes \gbr x_{4,5}^{e_5,f'_5}v_{5,m_5}
\end{equation}
where
\begin{gather*}
c_1'=c_1+a_1, \qquad b_4'=b_4+j_2-j_4, \qquad c_4'=c_4+a_4+j_2,\\
f_5'=f_5+d_5, \qquad e_2'=e_2+j_1-j_3, \qquad f_2'=f_2+d_2+j_1,
\end{gather*}
with the numbers $a_l,b_l,\dots,f_l$ satisfying \eqref{e:abcdef} as well as
\begin{gather}\notag
a_1+c_1\le m_1, \qquad b_1\le c_1, \qquad a_2\le b_2,  \qquad b_4\le c_4+j_4, \qquad c_2=0,\\\label{e:abcdef<}\\\notag
d_5+f_5\le m_5, \qquad e_5\le f_5, \qquad d_4\le e_4,  \qquad e_2\le f_2+j_3, \qquad f_4=0.
\end{gather}
Notice that
\begin{gather*}
a_2=a_1=b_1=b_4=c_1=0, \qquad a_4=p_1, \qquad b_2=p_2, \qquad c_4=k_2-j_2-p_1,\\
d_4=d_5=e_5=e_2=f_5=0, \qquad d_2=p_5, \qquad e_4=p_4, \qquad f_2=k_1-j_1-p_5,
\end{gather*}
satisfy \eqref{e:abcdef} and \eqref{e:abcdef<}, which implies that the set of nonzero summands in \eqref{e:1245licomult} is nonempty. One easily sees that the vectors in \eqref{e:1245licomult'},  for distinct values of $(a_2,b_1,b_2,b_4',c_1',c_4',d_4,e_2',e_4,e_5,f_2',f_5')$,  are linearly independent by looking at the weights of their tensor factors. Since $v_{_{\gb j,\gb p}}$ is a linear combination of these vectors with positive rational coefficients, it follows that $v_{_{\gb j,\gb p}}\ne 0$ for all choices of $\gb j$ and $\gb p$.

We now restrict ourselves to $\lambda$ as in Theorem \ref{t:main}. To simplify notation, we rewrite the vectors in \eqref{e:1245licomult'} as
\begin{equation}\label{e:1245licomult's}
v_2^{a_2,b_2,e'_2,f'_2}\otimes v_4^{b'_4,c'_4,d_4,e_4}\otimes v_6\otimes v_1^{b_1,c'_1}\otimes v_5^{e_5,f'_5}.
\end{equation}
If $\{2,4\}\nsubseteq\supp(\lambda)$, the argument reduces to one identical to the one used in the proof of \cite[Proposition 5.7]{mou:reslim} (all the details can be found in \cite[Lemma 5.3.9]{per}). From now on we assume $\supp(\lambda)\subseteq\{2,4,6\}$ which is the remaining case to consider.  In this case, we must have $j_3=j_4=k_3=k_4=0, p_2=l_-, p_4=l_+$. In particular, \eqref{e:abcdef} and \eqref{e:abcdef<} reduce to
\begin{gather*}
a_1=b_1=c_1=c_2=0, \quad a_2+a_4=p_1, \quad b_2+b_4=k_2-j_2, \quad c_4=k_2-j_2-p_1, \quad a_2\le b_2, \quad b_4\le c_4,\\
d_5=e_5=f_5=f_4=0, \quad d_4+d_2=p_5, \quad e_4+e_2=k_1-j_1, \quad f_2=k_1-j_1-p_5, \quad d_4\le e_4, \quad e_2\le f_2.
\end{gather*}
Therefore, $v_{_{\gb j,\gb p}}$ is a linear combination of vectors of the form
\begin{equation}\label{e:24licomult's}
v_2^{a_2,b_2,k_1-e_4,k_1-d_4}\otimes v_4^{k_2-b_2,k_2-a_2,d_4,e_4}\otimes v_6 \quad\text{with}\quad 0\le a_2\le p_1\le b_2\le l_-, 0\le d_4\le p_5\le e_4\le l_+.
\end{equation}
Set
\begin{equation}
v_{a,b,d,e} = v_2^{a,b,k_1-e,k_1-d}\otimes v_4^{k_2-b,k_2-a,d,e}\otimes v_6
\end{equation}
and observe that the coefficient of $v_{a,b,d,e}$ in $v_{_{\gb j,\gb p}}$ is nonzero iff $j_1\le k_1-e, j_2\le k_2-b, a\le p_1, d\le p_5$.

To complete the proof, we now show by induction on $n_1\in\mathbb Z_{\ge 0}$ that the set $\{v_{_{\gb j,\gb p}}: (k_1-j_1)\le n_1\}$ is linearly independent. We prove this performing a further induction on $n_2\in\mathbb Z_{\ge 0}$ to show that the set $\{v_{_{\gb j,\gb p}}: (k_1-j_1)\le n_1, (k_2-j_2)\le n_2\}$ is linearly independent. Set
\begin{gather*}
S(n_1,n_2)=\{(\gb j,\gb p): k_1-j_1\le n_1, k_2-j_2\le n_2\}, \quad S[n_1,n_2) = \{(\gb j,\gb p): k_1-j_1= n_1, k_2-j_2\le n_2\},\\
S(n_1,n_2] = \{(\gb j,\gb p): k_1-j_1\le n_1, k_2-j_2= n_2\}, \quad S[n_1,n_2] = \{(\gb j,\gb p): k_1-j_1= n_1, k_2-j_2= n_2\}.
\end{gather*}
The inductions clearly start when $n_1=n_2=0$ since  $\{v_{_{\gb j,\gb p}}:(\gb j,\gb p)\in S(0,0)\}=\{v_{_\gb k}\}$. Assume now that $n_2>0$ and, by induction hypothesis, that the set  $\{v_{_{\gb j,\gb p}}: (\gb j,\gb p)\in S(n_1,n_2-1)\}$ is linearly independent.  Let $c_{_{\gb j,\gb p}}\in\mathbb C$ be such that
\begin{equation}
\sum_{(\gb j,\gb p)\in S(n_1,n_2)} c_{_{\gb j,\gb p}}v_{_{\gb j,\gb p}} = 0.
\end{equation}
By the induction hypothesis, it remains to show that
\begin{equation}\label{e:cjp=0}
c_{_{\gb j,\gb p}}=0  \quad\text{for all}\quad (\gb j,\gb p)\in S(n_1,n_2].
\end{equation}
Set
\begin{gather*}
S[n_1,n_2](m) = \{(\gb j,\gb p)\in S[n_1,n_2]: (p_1,p_5) = (n_2-r,n_1-s), r+s\le m\}.
\end{gather*}
Observe that if $(\gb j,\gb p)\in S(n_1,n_2)$ is such that the coefficient of $v_{n_2-r,n_2,n_1-s,n_1}$ in  $v_{_{\gb j,\gb p}}$ is nonzero, then $(\gb j,\gb p)\in S[n_1,n_2]$ and $(p_1,p_5) = (n_2-r',n_1-s'), 0\le r'\le r, 0\le s'\le s$. An easy induction on $r+s\ge 0$ shows that $c_{_{\gb j,\gb p}}=0$ for all $(\gb j,\gb p)\in S[n_1,n_2](r+s)$. This implies $c_{_{\gb j,\gb p}}=0$ for all $(\gb j,\gb p)\in S[n_1,n_2]$. Similarly, if $(\gb j,\gb p)\in S(n_1,n_2)\backslash S[n_1,n_2]$ is such that the coefficient of $v_{n_2-r,n_2,n_1-1-s,n_1-1}$ in  $v_{_{\gb j,\gb p}}$ is nonzero, then $(\gb j,\gb p)\in S[n_1-1,n_2]$ and $(p_1,p_5) = (n_2-r',n_1-1-s'), 0\le r'\le r, 0\le s'\le s$. Again, an easy induction on $r+s\ge 0$ shows that $c_{_{\gb j,\gb p}}=0$ for all $(\gb j,\gb p)\in S[n_1-1,n_2](r+s)$. Proceeding recursively in this way one proves $c_{_{\gb j,\gb p}}=0$ for all $(\gb j,\gb p)\in S[n_1-j,n_2], 0\le j\le n_1$. Since $S(n_1,n_2] = \cup_j S[n_1-j,n_2]$, \eqref{e:cjp=0} follows.

The above paragraph proves the induction step on $n_2$. It remains to show that the induction on $n_2$ starts when $n_1>0$. Thus, assume $n_1>0, n_2=0$ and, by induction hypothesis on $n_1$, that $\{v_{_{\gb j,\gb p}}:(\gb j,\gb p)\in S(n_1-1,0)\}$ is linearly independent. Let $c_{_{\gb j,\gb p}}\in\mathbb C$ be such that
\begin{equation}
\sum_{(\gb j,\gb p)\in S(n_1,0)} c_{_{\gb j,\gb p}}v_{_{\gb j,\gb p}} = 0.
\end{equation}
By the induction hypothesis, it remains to show that
\begin{equation}\label{e:cjp=0n2=0}
c_{_{\gb j,\gb p}}=0  \quad\text{for all}\quad (\gb j,\gb p)\in S[n_1,0].
\end{equation}
The proof of \eqref{e:cjp=0n2=0} is similar to that of \eqref{e:cjp=0} and we omit the details.

\begin{rem}
Observe that the above proof of \eqref{e:cjp=0} is based on finding values of $a,b,d,e$ such that $v_{a,b,d,e}$ appears with nonzero coefficient in $v_{_{\gb j, \gb p}}$ for exactly one value of of the pair $(\gb j, \gb p)\in S(n_1,n_2)$ and so on.  The difficult in adapting the above proof for proving \eqref{e:1245basis} for all $\lambda$ not supported in the trivalent node resides in the fact that, if $\{2,4\}\subseteq\supp(\lambda)$ and either $m_1\ne 0$ or $m_5\ne 0$, one can give examples of $(\gb j, \gb p)\ne (\gb j', \gb p')$ such that the summands of the form $v_{a,b,d,e}$ with nonzero coefficients appearing in $v_{_{\gb j, \gb p}}$ are exactly the same as those appearing in $v_{_{\gb j', \gb p'}}$. Hence, one would need to keep a very efficient control of the coefficients.
\end{rem}

\subsection{Proof of \eqref{e:AsBs}}\label{ss:AsBs}
By \eqref{e:B(s)A}, in order to prove that $\gbr r_{_\gb j}\in \cal A(\gb s)\Rightarrow \gb j\in\cal B(s)$, it remains to show that $\gbr x_{_{\gbr r_{_\gb j}}} v_{_\gb s}\ne 0$ only if $j_3\le j_1, j_4\le j_2$, and $j_5\ge 0$. It follows from Lemma \ref{l:coord} that
\begin{align*}
\gbr x_{_{\gbr r_{_\gb j}}} v_{_\gb s} = (x^-_{\beta_{30},1})^{s_6+j_5}(x^-_{\beta_{28},1})^{j_0} \left((x^-_{\beta_{26},1})^{j_3}(x^-_{\beta_{24},1})^{s_2-j_1}v_{2,m_2}^{s_2}\otimes (x^-_{\beta_{27},1})^{j_4}(x^-_{\beta_{25},1})^{s_4-j_2}v_{4,m_4}^{s_4}\otimes v_{6,m_6}^{s_6}\otimes w\right)
\end{align*}
Notice that if $s_2-j_1+j_3>s_2$ we have $(x^-_{\beta_{26},1})^{j_3}(x^-_{\beta_{24},1})^{s_2-j_1}v_{2,m_2}^{s_2}=0$ by \eqref{e:trvanish}. In other words, $\gbr r_{_\gb j}\in\cal A(\gb s)$ only if $j_3\le j_1$. Similarly, we must have $j_4\le j_2$. Continuing the above computation we get that $\gbr x_{_{\gbr r_{_\gb j}}} v_{_\gb s} = (x^-_{\beta_{30},1})^{s_6+j_5}v'$ where $v'$ is the vector
\begin{align*}
&\sum_{k=0}^{j_0}\tbinom{j_0}{k} (x^-_{\beta_{28},1})^{j_0-k} (x^-_{\beta_{26},1})^{j_3}(x^-_{\beta_{24},1})^{s_2-j_1}v_{2,m_2}^{s_2}\otimes (x^-_{\beta_{28},1})^{k}(x^-_{\beta_{27},1})^{j_4}(x^-_{\beta_{25},1})^{s_4-j_2}v_{4,m_4}^{s_4}\otimes v_{6,m_6}^{s_6}\otimes w.
\end{align*}
By \eqref{e:trvanish}, $(x^-_{\beta_{28},1})^{j_0-k} (x^-_{\beta_{26},1})^{j_3}(x^-_{\beta_{24},1})^{s_2-j_1}v_{2,m_2}^{s_2}=0$ if $(j_0-k)+j_3+(s_2-j_1)>s_2$. Hence, the summand corresponding to $k$ in the above summation is nonzero only if $j_2-j_4-j_5\le k$. Similarly, $(x^-_{\beta_{28},1})^{k}(x^-_{\beta_{27},1})^{j_4}(x^-_{\beta_{25},1})^{s_4-j_2}v_{4,m_4}^{s_4}=0$ if $k+j_4+(s_4-j_2)>s_4$, i.e., if $k>j_2-j_4$. Thus, the summand corresponding to $k$ in the above summation is nonzero only if $j_2-j_4-j_5\le k\le j_2-j_4$. In particular, we must have $j_5\ge 0$.

To complete the proof of \eqref{e:AsBs}, we need to show that $\gb j\in\cal B(\gb s)\Rightarrow \gbr x_{_{\gbr r_{_\gb j}}} v_{_\gb s}\ne 0$. Set $j_-=\max\{0,j_2-j_4-j_5\}$ and $j_+=\min\{j_0,j_2-j_4\}$ and observe that $\gb j\in\cal B(\gb s)\Rightarrow j_-\le j_+$. Given $j_-\le k\le j_+$, set
$$v_k = \tbinom{j_0}{k}(x^-_{\beta_{28},1})^{j_0-k} (x^-_{\beta_{26},1})^{j_3}(x^-_{\beta_{24},1})^{s_2-j_1}v_{2,m_2}^{s_2}\otimes (x^-_{\beta_{28},1})^{k}(x^-_{\beta_{27},1})^{j_4}(x^-_{\beta_{25},1})^{s_4-j_2}v_{4,m_4}^{s_4}.$$
Notice that Lemmas \ref{l:coord2} and \ref{l:coord4} imply that $v_k\ne 0$. Continuing the above computation we see that
\begin{align*}
\gbr x_{_{\gbr r_{_\gb j}}} v_{_\gb s}&=\ \sum_{l=0}^{s_6+j_5}\sum_{k=j_-}^{j_+} \tbinom{s_6+j_5}{l}(x^-_{\beta_{30},1})^{s_6+j_5-l}v_k\otimes (x^-_{\beta_{30},1})^{l}v_{6,m_6}^{s_6}\otimes w=\\
&=\ \tbinom{s_6+j_5}{s_6}\sum_{k=j_-}^{j_+} (x^-_{\beta_{30},1})^{j_5}v_k\otimes (x^-_{\beta_{30},1})^{s_6}v_{6,m_6}^{s_6}\otimes w.
\end{align*}
The second equality above is proved as follows. By \eqref{e:trvanish}, $(x^-_{\beta_{30},1})^{s_6+j_5-l}v_k=0$ if $(s_6+j_5-l)+(j_0-k)+j_3+(s_2-j_1)+k+j_4+(s_4-j_2)>s_2+s_4$, i.e., if $l<s_6$. Similarly, $(x^-_{\beta_{30},1})^{l}v_{6,m_6}^{s_6}=0$ if $l>s_6$.
By \eqref{e:coord6}, $(x^-_{\beta_{30},1})^{s_6}v_{6,m_6}^{s_6}\ne 0$ and, therefore, it remains to show that
\begin{equation}\label{e:AsBs'}
(x^-_{\beta_{30},1})^{j_5}\sum_{k=j_-}^{j_+} v_k\ne 0.
\end{equation}
Indeed, $\tbinom{j_0}{k}^{-1}(x^-_{\beta_{30},1})^{j_5}v_k$ is equal to
\begin{align*}
&\sum_{l=0}^{j_5}\tbinom{j_5}{l} (x^-_{\beta_{30},1})^{l}(x^-_{\beta_{28},1})^{j_0-k} (x^-_{\beta_{26},1})^{j_3}(x^-_{\beta_{24},1})^{s_2-j_1}v_{2,m_2}^{s_2}\otimes (x^-_{\beta_{30},1})^{j_5-l} (x^-_{\beta_{28},1})^{k}(x^-_{\beta_{27},1})^{j_4}(x^-_{\beta_{25},1})^{s_4-j_2}v_{4,m_4}^{s_4}.
\end{align*}
Making use of \eqref{e:trvanish} once more we see that
$$(x^-_{\beta_{30},1})^{j_5}v_k=\tbinom{j_0}{k}\tbinom{j_5}{k-j_2+j_4+j_5} v_2^k\otimes v_4^k$$
where
$$v_2^k=(x^-_{\beta_{30},1})^{k-j_2+j_4+j_5}(x^-_{\beta_{28},1})^{j_0-k} (x^-_{\beta_{26},1})^{j_3}(x^-_{\beta_{24},1})^{s_2-j_1}v_{2,m_2}^{s_2}$$
and
$$v_4^k = (x^-_{\beta_{30},1})^{j_2-j_4-k} (x^-_{\beta_{28},1})^{k}(x^-_{\beta_{27},1})^{j_4}(x^-_{\beta_{25},1})^{s_4-j_2}v_{4,m_4}^{s_4}.$$
Lemma \ref{l:coord2} implies that $v_2^k\ne 0$ while Lemma \ref{l:coord4} implies that $v_4^k\ne 0$. Observing that  $v_2^k$ are weight vectors of distinct weight and similarly for $v_4^k$, \eqref{e:AsBs'} follows. This completes the proof of \eqref{e:AsBs}. Notice also that, if $j_5=0$, it follows from the computations above that $\gbr x_{_{\gbr r_{_\gb j}}} v_{_\gb s}$ is a nonzero scalar multiple of
\begin{align*}
(x^-_{\beta_{28},1})^{j_1-j_3} (x^-_{\beta_{26},1})^{j_3}(x^-_{\beta_{24},1})^{s_2-j_1}v_{2,m_2}^{s_2}\otimes (x^-_{\beta_{28},1})^{j_2-j_4}(x^-_{\beta_{27},1})^{j_4}(x^-_{\beta_{25},1})^{s_4-j_2}v_{4,m_4}^{s_4}\otimes (x^-_{\beta_{30},1})^{s_6}v_{6,m_6}^{s_6}\otimes w.
\end{align*}

\bibliographystyle{amsplain}

\begin{thebibliography}{10}
\bibitem{cha:minr2}
V. Chari, {\em Minimal affinizations of representations of quantum groups: the rank-2 case}, Publ. Res. Inst. Math. Sci. {\bf 31} (1995), 873--911.

\bibitem{cha:fer}
\bysame, {\em On the fermionic formula and the Kirillov-Reshetikhin conjecture}, Int. Math. Res. Not. 2001 (2001), 629--654.

\bibitem{cha:braid}
\bysame, {\it Braid group actions and tensor products}, Internat. Math. Res. Notices 2002 (2002), 357--382.

\bibitem{cg:kos}
V. Chari and J. Greenstein, {\em A family of Koszul algebras arising from finite-dimensional representations of simple Lie algebras},  Adv. Math.  {\bf 220}  (2009),  no. 4, 1193--1221.

\bibitem{cg:proj}
\bysame, {\em Minimal affinizations as projective objects}, preprint arXiv:1009.4494.

\bibitem{chhe:beyond}
V. Chari and D. Hernandez, {\em Beyond Kirillov-Reshetikhin modules}, Contemp. Math. {\bf 506} (2010), 49--81.

\bibitem{chakle}
V. Chari and M. Kleber, {\em Symmetric Functions and Representations of Quantum Affine Algebras}, Contemp. Math. {\bf 297} (2002), 27--45.

\bibitem{cm:chb}
V. Chari and A. Moura, {\it Characters and blocks for finite-dimensional representations of quantum affine algebras}, Internat. Math Res. Notices 2005 (2005),  257--298.

\bibitem{cm:kr}
\bysame, {\em The restricted Kirillov-Reshetikhin modules for the current and twisted current algebras}, Comm. Math. Phys. {\bf 266} (2006),  431--454.

\bibitem{cm:krg}
\bysame, {\it Kirillov--Reshetikhin modules associated to  $G_2$}, Contemp. Math. {\bf 442} (2007) 41--59.

\bibitem{cp:book}
V. Chari and A. Pressley, A guide to quantum groups, Cambridge University Press (1994).

\bibitem{cp:minsl}
\bysame, {\em Minimal affinizations of representations of quantum groups: the simply laced case}, J. of Algebra {\bf 184} (1996), 1--30.


\bibitem{cp:weyl}
\bysame, {\em Weyl modules for classical and quantum affine algebras}, Represent. Theory {\bf 5} (2001), 191--223.

\bibitem{fremuk:qchar}
E. Frenkel and E. Mukhin, {\it Combinatorics of $q$-characters of finite-dimensional representations of quantum affine algebras}, Comm. Math. Phys. {\bf 216} (2001), 23--57.


\bibitem{freres:qchar}
E. Frenkel and N. Reshetikhin, {\it The $q$-characters of representations of quantum affine algebras and deformations of $\mathcal{W}$-algebras}, Recent developments in quantum affine algebras and related topics (Raleigh, NC, 1998), Contemp. Math. {\bf 248} (1999), 163--205.

\bibitem{jap:rem}
G. Hatayama, A. Kuniba, M. Okado, T. Takagi, and Y. Yamada, {\em Remarks on the Fermionic Formula}. Contemp. Math. {\bf 248} (1999), 243--291.

\bibitem{her:krc}
D. Hernandez, {\em Kirillov-Reshetikhin conjecture : the general case}, Int. Math. Res. Not. {\bf 2010} (2010), no. 1, 149--193.

\bibitem{hum:book}
J.~Humphreys, Introduction to Lie algebras and representation theory, GTM9 Springer (1972).


\bibitem{jm:root}
D. Jakeli\'c and A. Moura, {\em Tensor products, characters, and blocks of  finite-dimensional representations of quantum affine algebras at roots of unity}, to appear in Int. Math. Res. Not. doi:10.1093/imrn/rnq250.

\bibitem{lus:book}
G. Lusztig, Introduction to quantum groups, Birk\"auser (1993).

\bibitem{mou:reslim}
A. Moura, {\em Restricted limits of minimal affinizations}, Pacific J. Math {\bf 244} (2010), 359--397.

\bibitem{nak:qvtqchar}
H. Nakajima, {\em Quiver varieties and $t$-analogs of $q$-characters of quantum affine algebras}, Ann. of Math. {\bf 160} (2004), 1057--1097.

\bibitem{per}
F. Pereira, Caracteres de limites clássicos de afinizações minimais de tipo $E_6$, M.Sc. Thesis, Unicamp (2010).
\end{thebibliography}

\end{document}